\DeclareFontFamily{U}{mathx}{}
\DeclareFontShape{U}{mathx}{m}{n}{<-> mathx10}{}
\DeclareSymbolFont{mathx}{U}{mathx}{m}{n}
\DeclareMathAccent{\widehat}{0}{mathx}{"70}
\DeclareMathAccent{\widecheck}{0}{mathx}{"71}
\theoremstyle{plain}
\newtheorem{Thm}{Theorem}[subsection]
\newtheorem{Lem}[Thm]{Lemma}
\newtheorem{Cor}[Thm]{Corollary}
\newtheorem{Prop}[Thm]{Proposition}
\newtheorem{Rem}[Thm]{Remark}
\gdef\mnote#1{\marginpar{\tiny
 \tolerance\@M\spaceskip2.6\p@ plus10\p@ minus.9\p@\rm#1}}}
\def\Z{\mathbb Z}
\def\R{\mathbb R}
\def\C{\mathbb C}
\def\P{\mathbb P}
\def\T{\mathbb T}
\def\J{\mathcal J^f}
\def\SS{\mathcal S}
\def\cp{\hbox{${\Bbb {C P}^2}$}}
\def\cpb{\hbox{$\overline{{\Bbb {C P}^2}}$}}
\def\CP#1{\mathbb{CP}^{#1}}
\def\aa{\alpha}
\def\bb{\beta}
\def\b{\frak b}
\def\d{\delta}
\def\D{\Delta}
\def\L{\Lambda}
\def\S{\Sigma}
\def\JS{\widetilde{\J}}
\def\s{\sigma}
\def\Crit{\til\D}
\def\V{\mathcal V}
\def\tV{\widetilde{\V}}
\def\ind{\operatorname{ind}}
\def\Trans{\operatorname{Trans}}
\let\del=\partial
\let\ra=\rangle
\let\la=\langle
\let\om=\omega
\let\sm=\smallsetminus
\let\til=\widetilde
\def\Sec{\operatorname{Sec}}
\newcommand{\Hom}{\operatorname{Hom}}
\newcommand{\Mod}{\operatorname{Mod}}
\newcommand{\PMod}{\operatorname{PMod}}
\newcommand{\Diff}{\operatorname{Diff}}
\begin{document}

\title{Monodromy factorizations of lines on del Pezzo surfaces}

\author{Mohan Bhupal and Sergey Finashin}

\subjclass[2020]{57K43, 14J27, 57K20, 20F65}

\keywords{Lefschetz fibrations, rational elliptic surfaces, del Pezzo surfaces, monodromy factorizations.
\newline\indent Department of Mathematics,  Middle East Technical University, Ankara, Turkey
\newline\indent bhupal@metu.edu.tr, serge@metu.edu.tr}

\begin{abstract}
We give a list of monodromy factorizations in the pure mapping class group
$\PMod(\T_{d+1})$ of a torus with $d+1$ marked points that
represent lines on a del Pezzo surface $Y=\cp\#(9-d)\cpb$ of degree $d$ for $d\le3$.
 These factorizations are lifts of a certain fixed monodromy factorization
 in $\PMod(\T_{d})$ that represents $Y$.

In the case $d=1$, discussed in more detail, we give an explicit correspondence between such factorizations and the 240 roots of $E_8=K^\perp$ (orthogonal complement in $H_2(Y)$ of the canonical class).
\end{abstract}

\maketitle

\section{Introduction}
\subsection{Motivations}
 In \cite{ko}, Korkmaz and Ozbagci gave examples of monodromy factorizations for $m\le9$ disjoint sections
 of a rational elliptic Lefschetz fibration (LF) $f\colon X\to S^2$.
Since then there have been further development using their approach; see for instance \cite{HH}.
 {In the case $m=8$, there exists another nonisomorphic factorization found by Tanaka \cite{T}.}
Our first observation shows the {\it universality} of the examples in \cite{ko,T}
with respect to isomorphisms, {under the assumption of genericity, which we will always assume (see Section~\ref{conventions} for conventions).
In particular, we answer negatively Question 1.3 in \cite{HH} on the existence of "non-holomorphic" sections in rational elliptic LFs.}

 \begin{Thm}\label{universality}
Any pair of rational elliptic LFs $f_j\colon X_j\to S^2$, $j=1,2$, endowed with $1\le m\le7$, or $m=9$ disjoint sections
$S_1^j,\dots,S_m^j\subset X_j$ are related by a diffeomorphism $\Phi\colon X_1\to X_2$, such that
$f_2\circ\Phi=\phi\circ f_1$ for some diffeomorphism $\phi\colon S^2\to S^2$ and
$\Phi(S_i^1)=S_i^2$, $i=1,\dots,m$.

For $m=8$, an extra assumption that blowing down the given sections in $X_1$ and $X_2$ give diffeomorphic $4$-manifolds
(both either $S^2\times S^2$ or $\cp\#\cpb$) leads to the same conclusion.
 \end{Thm}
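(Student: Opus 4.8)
The plan is to split the statement into a part about the underlying elliptic fibration and a part about the sections, and then to convert the resulting homological equivalence into an honest fiber-preserving diffeomorphism by blowing the sections down. First I would dispose of the base fibration. A rational elliptic LF is a genus-one LF over $S^2$ with $12$ nodal fibers and total space $\cp\#9\cpb$, and its monodromy is a factorization of the identity in $\Mod(\T)=\SL(2,\Z)$ into twelve right-handed Dehn twists about nonseparating curves; such a factorization is unique up to Hurwitz moves and global conjugation (equivalently $(t_at_b)^{6}=1$ is the only one), so any two rational elliptic LFs are LF-isomorphic. Pushing both fibrations onto one fixed standard model $f\colon X\to S^2$, the theorem reduces to the claim that any two configurations of $m$ disjoint sections of $f$ are carried to one another by a self-LF-isomorphism of $(X,f)$. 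Restricting to a regular fiber $F\cong\T$, a configuration of disjoint sections determines $m$ distinct marked points preserved by the global monodromy, so the monodromy lifts canonically to the pure mapping class group $\Mod(\T_m)$; two configurations are fiber-isomorphic exactly when their lifted factorizations agree up to Hurwitz moves and simultaneous conjugation in $\Mod(\T_m)$. What must be shown is that, for the admissible $m$, all such lifts lie in a single equivalence class.

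Next I would read off the homological invariants. Each section $S_i$ is a $(-1)$-sphere with $[S_i]\cdot[F]=1$ and $[S_i]\cdot[S_j]=0$ for $i\neq j$, so $[S_1],\dots,[S_m]\in H_2(X)$ is an ordered system of pairwise orthogonal exceptional classes each pairing to $1$ with the fiber class $[F]=-K$. Self-LF-isomorphisms act on $H_2(X)$ through isometries fixing $[F]$, and this action contains the Weyl group generated by reflections in the $(-2)$-classes represented by vanishing cycles, i.e.\ the Weyl group of the $E_8$ root lattice $K^\perp/\langle K\rangle$. The task then becomes combinatorial: determine the orbits of ordered $m$-tuples of pairwise orthogonal exceptional classes under this Weyl action. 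For $1\le m\le7$ and for $m=9$ there is a single orbit, the residual root system on the orthogonal complement being of type $E_{9-m}$; for $m=8$ there are exactly two orbits, distinguished by the parity (even versus odd) of the intersection form on the complementary rank-$2$ lattice. This is precisely the $S^2\times S^2$ versus $\cp\#\cpb$ alternative, and it forces the extra hypothesis at $m=8$; no such hypothesis is needed elsewhere, since for the remaining $m$ the complementary form is odd, its signature not being divisible by $8$.

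Finally I would upgrade the homological equivalence to a genuine diffeomorphism by contracting the sections. Blowing down the $m$ disjoint sections converts $(X,f)$ into the rational surface $X'=\cp\#(9-m)\cpb$, a del Pezzo surface of degree $m$ carrying a pencil whose base points are the images of the sections, and the sections reappear as lines on $X'$. Using a Wall-type realization theorem for diffeomorphisms of these small rational surfaces, together with the transitivity established in the previous step, any isometry in the allowed Weyl orbit is induced by a diffeomorphism of $X'$ preserving the pencil and permuting the base points as prescribed; blowing back up yields the desired fiber-preserving $\Phi$ with $f_2\circ\Phi=\phi\circ f_1$ and $\Phi(S_i^1)=S_i^2$.

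The main obstacle, I expect, is this last step: promoting homological data to an actual fiber-preserving map requires controlling the kernel of the action of self-LF-isomorphisms on $H_2(X)$, namely the point-pushing (pure surface braid) part of $\Mod(\T_m)$ coming from dragging the marked points around $\T$, and checking that two lifts with equal homology genuinely differ by such a homologically trivial element. It is here that the precise arithmetic of the root system $E_{9-m}$ and, for $m=8$, the parity hypothesis are indispensable; the cases $m\le7$ and $m=9$ go through cleanly because the corresponding configuration spaces carry a transitive Weyl action with no even-form ambiguity, whereas the coincidence of two del Pezzo types in degree $8$ is exactly what the supplementary assumption removes.
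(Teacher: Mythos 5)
There is a genuine gap, and you have in fact located it yourself in your final paragraph without closing it. Your reduction to lattice combinatorics is fine as far as it goes: the transitivity of the relevant reflection group on ordered $m$-tuples of pairwise orthogonal exceptional classes pairing to $1$ with the fibre, the residual root systems, and the two-orbit phenomenon at $m=8$ distinguished by the parity of the complementary rank-$2$ form are all classical and correct, and the realization of the generating reflections by fibre-preserving diffeomorphisms is available (it is essentially Proposition \ref{covering-Dehn}: half-twists along matching arcs cover Picard--Lefschetz reflections). What is missing is the step you defer to ``a Wall-type realization theorem'': that two configurations of disjoint sections whose ordered homology classes agree are actually related by an LF-isomorphism carrying one configuration to the other. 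Wall's theorem produces a diffeomorphism of the $4$-manifold inducing a prescribed isometry of $H_2$, but it gives no compatibility with the Lefschetz fibration and, more importantly, no control over where it sends the given sections as submanifolds --- only over their homology classes. Equivalently, in your own language, you must show that two lifts of the monodromy factorization to $\PMod(\T_m)$ with the same homological data differ by a point-pushing element that is realized by an LF-automorphism, and ``the precise arithmetic of the root system'' does not do this; it is a geometric, not a lattice-theoretic, assertion.

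The paper closes exactly this hole by a completely different route: Theorem \ref{sections-are-lines} shows, via Gromov--McDuff theory of $J$-holomorphic curves in a connected space of fibration-compatible almost complex structures, that any configuration of disjoint smooth sections can be simultaneously isotoped to a configuration of algebraic lines on a rational elliptic surface. Once both configurations are algebraic, the proof blows down to $\CP2$ with a $9$-point configuration and uses that the set of admissible $9$-tuples is Zariski-open in $(\cp)^8$, hence path-connected; a path of configurations induces the desired LF-isomorphism matching the sections. If you want to salvage your Weyl-group approach, you would still need an analogue of Theorem \ref{sections-fixed} (homologous sections in the complement of the others are isotopic there), which is proved by the same pseudoholomorphic machinery --- so the geometric input cannot be avoided, only relocated.
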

 
It follows that the monodromy factorizations of $f_1$ and $f_2$ are related by a conjugation identifying the fibres of $f_i$ over the basepoints and {\it Hurwitz moves}
(corresponding to elementary modifications of the chosen systems of arcs in $S^2$).
 
 If Hurwitz moves are excluded from the equivalence relation, then the classification of monodromy factorizations becomes a more subtle task, 
and it is the subject of this paper.
Namely, the task is to classify
sets of disjoint sections $S_1,\dots,S_d\subset X$ of an elliptic LF $f\colon X\to S^2$
up to fibrewise diffeomorphisms $\Phi\colon X\to X$ covering the identity map $\phi=1_{S^2}$.
A related question is classification up to simultaneous isotopy of sections.
The fibrewise diffeomorphism task is straightforward in the case of $d=1$ (see Section~\ref{sec:elliptic}) and was analyzed much more deeply
in \cite{FL} (e.g., Cor.~3.8). The isotopy classification problem is resolved in Theorem \ref{sections-are-lines}.
Our general strategy is to classify sections in terms of monodromy factorizations by induction on $d$.
In each induction step, we suppose that for $d$
disjoint sections $S_1,\dots,S_d\subset X$ a complete list of
monodromy factorizations in $\PMod(\T_d)$ is known, and we pick one of them.
Then we describe all liftings of the chosen one
to a factorization in $\PMod(\T_{d+1})$, which corresponds to
 adding one more section $S_{d+1}\subset X\sm(S_1\cup\dots\cup S_d)$.

We interpret the task at each step as describing monodromy factorizations of lines on del Pezzo surfaces $Y_d=\cp\#(9-d)\cpb$ of degree $d$.
In particular, we show that the number $n_d$ of isotopy classes of sections $S_{d+1}$ is
the number of lines in $Y_d$. The numbers $n_d$ are known to be
$240,56,27,16,10,6,3,1$, respectively, for $d=1,\dots,8$.
In this paper, we restricted ourselves to the richest cases of $d=1,2$ and $3$.

\begin{Thm}\label{Main-Th}
The lines on a del Pezzo surface $Y$ of degree $d$ for $1\le d\le 3$
are represented 
by the $n_d$ monodromy factorizations in $\PMod(\T_{d+1})$
 listed in Tables \ref{tab:d=1}, \ref{tab:d=2} and \ref{tab:d=3}.
\end{Thm}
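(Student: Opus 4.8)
The plan is to make the section--line correspondence geometric and then reduce the classification to an explicit lifting computation governed by a Birman exact sequence. First I fix the del Pezzo surface. A rational elliptic LF has total space $X=\cp\#9\cpb$ with torus fibres $\T$, and each section is a $(-1)$-sphere. Blowing down $d$ disjoint sections $S_1,\dots,S_d$ produces the del Pezzo surface $Y_d=\cp\#(9-d)\cpb$ of degree $d$, and any further disjoint section $S_{d+1}$ descends to a $(-1)$-curve on $Y_d$, that is, to a line. By Theorem \ref{sections-are-lines} this assignment is a bijection between isotopy classes of sections disjoint from the fixed ones and lines on $Y_d$, while Theorem \ref{universality} shows that the ambient LF, and hence the base factorization $W_d\in\PMod(\T_d)$ representing $Y_d$, is unique up to fibrewise diffeomorphism. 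It therefore suffices to enumerate the lifts of one fixed $W_d$.

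Second, I set up the lifting. The intersections of the sections with the fibres are the marked points, so adding $S_{d+1}$ corresponds to lifting $W_d$ along the point-forgetting homomorphism $\PMod(\T_{d+1})\to\PMod(\T_d)$. The governing tool is the Birman exact sequence
$$1\to\pi_1(F)\to\PMod(\T_{d+1})\to\PMod(\T_d)\to1,$$
where $F$ is the fibre torus with the $d$ existing marked points removed and the kernel is generated by point-pushing loops. A lift of $W_d$ is a factorization whose image is $W_d$ factor by factor, with each Dehn-twist factor lifted modulo a point-pushing ambiguity; the extra LF constraint is that each lifted factor must again be a single Dehn twist about a simple closed curve in $\T_{d+1}$ that descends to the prescribed vanishing cycle. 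I would organize the lifts by how the new marked point is transported across the successive vanishing cycles, and check that imposing the constraint at every singular fibre cuts the a priori infinite set of lifts down to a finite list, producing Tables \ref{tab:d=1}, \ref{tab:d=2} and \ref{tab:d=3} for $d=1,2,3$.

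Third, I match the list against homology. A lift determines the class $\ell\in H_2(Y_d)$ of its line, characterized by $\ell^2=\ell\cdot K=-1$; the set of such classes has cardinality $n_d$ and carries a transitive action of the Weyl group of $K^\perp$. For $d=1$ the shift $\ell\mapsto\ell+K$ carries these classes bijectively onto the $240$ roots of $K^\perp\cong E_8$ (the vectors of square $-2$), which yields the explicit dictionary of the abstract; for $d=2,3$ one obtains instead the $56$ and $27$ classes permuted by the Weyl groups of $E_7$ and $E_6$. Reading off the homological invariant of each tabulated factorization then shows at once that distinct entries represent distinct lines and that every line is realized, so the tables are complete.

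The step I expect to be hardest is precisely this completeness. Finiteness of the lift set and the realization of every line amount to showing that the point-pushing ambiguities, which are unconstrained at any single vanishing cycle, are rigidified by the \emph{combination} of all the constraints coming from the full factorization; the bookkeeping is heavy because the numbers of factors and of candidate lines grow quickly (already $240$ at $d=1$). The homological invariant is what tames this: it reduces the delicate completeness claim to verifying that the map from tabulated lifts to the $(-1)$-classes of $Y_d$ is a bijection.
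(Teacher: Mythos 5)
Your overall architecture coincides with the paper's: identify isotopy classes of sections disjoint from $S_1,\dots,S_d$ with the $n_d$ homology classes satisfying $h^2=h\cdot K=-1$, $h\cdot S_i=0$ (Theorems \ref{sections-are-lines} and \ref{Bijection}), identify these in turn with conjugacy classes of lifts of the base factorization to $\PMod(\T_{d+1})$ (Theorem \ref{factorization-general}), and conclude completeness of the tables by exhibiting $n_d$ lifts that are pairwise homologically distinct. That counting argument is exactly how the paper closes the proof of Theorems \ref{Main} and \ref{Main-Th}.

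There is, however, a concrete gap at the point you yourself flag as decisive: you assert that one can ``read off the homological invariant of each tabulated factorization,'' but you supply no mechanism for doing so, and this is where the real technical content of the paper lies. The paper's device is to build a basis of $E_8=\la S_1,\T\ra^\perp$ (resp.\ $E_7$, $E_6$) out of matching cycles $\V_p$ over matching paths joining critical values, and to compute the intersection number $S\circ\V_p$ directly from the word of the factorization as a winding number of the new marked point with respect to the difference of consecutive vanishing cycles (Lemmas \ref{int-ind}, \ref{V-intersections}, \ref{VpS-d=2}, \ref{VpS-d=3}); these numbers are the $\D$-sequences of Tables \ref{tab:roots}--\ref{tab:d3-chain}, and their pairwise distinctness is what gives injectivity into the set of $(-1)$-classes. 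Without some such tool the bijection you invoke cannot be verified. A second, less serious, divergence: your plan to \emph{derive} the tables by enumerating Birman lifts and arguing that the point-pushing ambiguities are rigidified by the constraints at all twelve singular fibres is not what the paper does and would be hard to execute; the paper instead \emph{constructs} the $n_d$ candidate lifts by explicit substitutions using relations in $\PMod(\T_{d+1})$ (the star relation and identities such as \eqref{eq:S_1^2b_1} and \eqref{eq:S_1^3b_1}), and obtains completeness only a posteriori from the homology count. Since your own completeness argument ultimately rests on that same count, this detour is avoidable, but the verification that each tabulated word is an identity in $\PMod(\T_{d+1})$ lifting the base factorization still has to be carried out by hand.
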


\subsection{Homology characterization and isotopy classification theorems}
The idea of proof of the following theorem 
was presented to us by V. Schevchishin (see Section~\ref{Schevchishin} for our exposition of it).

\begin{Thm}\label{sections-are-lines}
Consider an elliptic surface $f\colon X\to\P^1$ of type $E(1)$, 
endowed with a set of disjoint smooth sections $S_1,\dots,S_d\subset X$.
  Then there exists a simultaneous isotopy transforming the sections $S_1,\dots,S_d$ into a set of disjoint lines in $X$
 (i.e., algebraic sections) through disjoint smooth sections in $X$.
\end{Thm}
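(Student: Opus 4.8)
The plan is to realize the given smooth sections as pseudoholomorphic curves in the exceptional classes $[S_i]$ and then exploit the uniqueness and persistence of such curves under deformation of the almost complex structure. First I would fix a symplectic form $\om$ on $X$ that tames the fibration $f$ and restricts to an area form on the fibres; after a preliminary $C^\infty$-small isotopy through disjoint sections I may assume that each $S_i$ is $\om$-symplectic. Since each $S_i$ is a section, $[S_i]\cdot F=1$, where $F$ is the fibre class, and the canonical bundle formula $K=-F$ for $E(1)$ together with adjunction gives $[S_i]^2=-1$ and $c_1\cdot[S_i]=1$; disjointness of the sections yields $[S_i]\cdot[S_j]=0$. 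I would then choose an $\om$-tame almost complex structure $J_0$ for which $f$ is pseudoholomorphic and all the $S_i$ are simultaneously $J_0$-holomorphic (possible precisely because the $S_i$ are disjoint symplectic sections), and connect $J_0$ to the integrable structure $J_1$ of a rational elliptic surface through a path $J_t$, $t\in[0,1]$, of $\om$-tame almost complex structures making $f$ pseudoholomorphic. The space of such structures is contractible, so the path exists.

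The heart of the argument is to track, for each $t$, the $J_t$-holomorphic representative of each class $[S_i]$. Because $c_1\cdot[S_i]=1\ge1$, automatic transversality (Hofer--Lizan--Sikorav) guarantees that every such curve is regular, so the zero-dimensional moduli space is cut out transversally for every $t$ and varies smoothly in $t$; this gives openness of the set of parameters admitting a representative and a smooth isotopy of the representatives. Uniqueness for each fixed $t$ follows from positivity of intersections, since two distinct $J_t$-curves in class $[S_i]$ would have nonnegative intersection, contradicting $[S_i]^2=-1$; applied to $[S_i]\cdot[S_j]=0$ the same principle keeps representatives of distinct classes disjoint throughout the deformation. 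Moreover, since $f$ is $J_t$-pseudoholomorphic and $[S_i]\cdot F=1$, any such curve meets each fibre in a single point and projects biholomorphically onto $\P^1$, so it is automatically a smooth section.

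It remains to establish closedness, namely that a representative persists as $t\to t_0$, and this is the step I expect to be the main obstacle. By Gromov compactness, a sequence of $J_{t_n}$-sections in class $[S_i]$ converges to a stable map whose components split into a single horizontal component $C_h$ (of degree one over $\P^1$, hence itself a section) in a class $[S_i]-mF$, together with vertical components contained in fibres whose total class is $mF$ with $m\ge0$. Applying the adjunction formula to the genus-zero curve $C_h$, for which $([S_i]-mF)^2=-1-2m$ and $c_1\cdot([S_i]-mF)=1$, forces $m+\delta=0$ for the (nonnegative) count $\delta$ of virtual double points, so $m=\delta=0$. Hence no fibre components can bubble off, the limit remains in class $[S_i]$, and closedness follows. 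The two delicate points here, and the reason compactness is the crux, are the exclusion of vertical bubbles — handled by the adjunction computation above together with the fact that the singular fibres of $f$ are irreducible nodal spheres, so every vertical class is a positive multiple of $F$ — and the initial reduction to disjoint, simultaneously $J_0$-holomorphic symplectic sections.

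Assembling these steps, the family of $J_t$-holomorphic sections obtained by tracking the representatives from $t=0$ to $t=1$ is a simultaneous isotopy, through disjoint smooth sections, carrying the original $S_i$ to the $J_1$-holomorphic sections. As $J_1$ is integrable, these limiting sections are algebraic, i.e. lines, which is exactly the assertion of the theorem.
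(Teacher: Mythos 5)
Your proposal is correct and follows essentially the same route as the paper: connect an almost complex structure making the $S_i$ pseudoholomorphic to the integrable one through fibration-compatible tame structures, and track the unique $J_t$-holomorphic representative of each class $[S_i]$, with uniqueness and disjointness from positivity of intersections and $[S_i]^2=-1$, $[S_i]\cdot[S_j]=0$. The only differences are in packaging: the paper phrases the continuation as the homeomorphism $p\colon\JS\to\J$, citing Gromov--McDuff for existence and Ohta--Ono to exclude components in singular fibres (and freezes $J$ near the critical locus, a technicality you elide), whereas you run the open--closed argument directly via automatic transversality and rule out vertical bubbling by the same adjunction computation.
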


In the following three theorems, we forget the algebraic structure on $X$ and view it as just an LF $f\colon X\to S^2$ of type $E(1)$ with
pairwise disjoint sections $S_1,\dots,S_d\subset X$ for $1\le d\le8$ and a torus fibre $\T$.
We denote by {$\Sec(X)$ the space of all sections $S\subset X$}, and by $\Sec(X;S_1,\dots,S_d)$
the space of sections $S\in\Sec(X)$ in the complement of $S_1\cup\dots\cup S_d$.

 \begin{Thm}\label{Bijection}
The map $S\mapsto [S]$ gives a one-to-one correspondence between the set of isotopy classes  $\pi_0(\Sec(X;S_1,\dots,S_d))$ and the set
\[\{h\in H_2(X)\,|\,h^2=-1, h\cdot \T=1, h\cdot S_i=0,i=1,\dots,d\}.\]
 \end{Thm}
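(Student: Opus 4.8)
The plan is to prove Theorem~\ref{Bijection}, which asserts that the homology class map gives a bijection between isotopy classes of sections in the complement $\Sec(X;S_1,\dots,S_d)$ and the homologically characterized set of classes $h$ with $h^2 = h\cdot K = -1$ and $h\cdot S_i = 0$. Let me sketch how I would approach this.

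=== PROOF PROPOSAL ===

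The plan is to prove the bijection by establishing three things: that the map $S\mapsto[S]$ is well-defined with image in the stated set, that it is injective, and that it is surjective. I will build everything on top of Theorem~\ref{sections-are-lines}, which lets me reduce arbitrary smooth sections to algebraic lines up to isotopy, so that algebro-geometric facts about del Pezzo surfaces become available.

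First, well-definedness and the image constraint. A section $S\subset X$ of the elliptic LF is a smoothly embedded sphere meeting each fiber once, so by the adjunction formula (using that $f$ has type $E(1)$, so $X=\cp\#9\cpb$ and $K=K_X$) one has $[S]^2=[S]\cdot K=-1$; this is the standard numerical signature of a $(-1)$-section. The disjointness $S\cap S_i=\emptyset$ forces $[S]\cdot[S_i]=0$ for geometrically transverse representatives, giving the three defining conditions of the target set. So $S\mapsto[S]$ lands in the set
\[
\mathcal H:=\{h\in H_2(X)\mid h^2=h\cdot K=-1,\ h\cdot S_i=0,\ i=1,\dots,d\}.
\]

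Next, injectivity. I would argue that two sections $S,S'$ in the complement with $[S]=[S']$ are isotopic. The natural route is to invoke Theorem~\ref{sections-are-lines} to reduce both to algebraic lines, and then use the fact that two smooth $(-1)$-spheres in a class $h$ with $h^2=-1$ inside a rational surface are isotopic — here I would appeal to the uniqueness part of the elliptic/rational structure, essentially a parametrized moduli argument showing the space of sections in a fixed homology class, disjoint from the $S_i$, is connected. This connectedness is the heart of the matter and should follow from the positivity of the intersection data together with a dimension count on the linear system of lines realizing $h$, ultimately reducing to the $d=1$ analysis flagged in Section~\ref{sec:elliptic} and deepened in \cite{FL}.

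Finally, surjectivity, which I expect to be the main obstacle. Given any $h\in\mathcal H$, I must realize it by an honest smooth section disjoint from $S_1,\dots,S_d$. After using Theorem~\ref{sections-are-lines} to assume the $S_i$ are algebraic lines, blowing them down yields a del Pezzo surface $Y$ of degree $d$, and the conditions $h^2=h\cdot K=-1$, $h\cdot S_i=0$ are exactly the numerical conditions defining a line on $Y$ in the sense used throughout the paper. The key algebro-geometric input is that on a del Pezzo surface every such numerical class is effective and represented by an actual $(-1)$-curve (a line), and that this curve, pulled back and kept disjoint from the exceptional sections, is a genuine section of $f$ meeting each fiber once. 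The delicate point is controlling the behavior over the singular fibers of the LF and ensuring the realized line is an everywhere-defined section rather than a multisection or a curve passing through a node; I would handle this by a careful choice within the linear system, using that lines on a del Pezzo avoid the base locus imposed by the fibration's singular fibers. If this effectivity-plus-section argument goes through, combining the three steps yields the claimed bijection.
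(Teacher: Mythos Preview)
Your decomposition into well-definedness, injectivity, and surjectivity, together with the reduction via Theorem~\ref{sections-are-lines} to the algebraic setting, matches the paper's approach. Two refinements sharpen it.

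For injectivity, the ``parametrized moduli argument showing the space of sections in a fixed homology class, disjoint from the $S_i$, is connected'' that you sketch is precisely Theorem~\ref{sections-fixed}, already proven in Section~\ref{Schevchishin} by the same $J$-holomorphic machinery as Theorem~\ref{sections-are-lines} (one simply adds the requirement that the $S_i$ remain pseudoholomorphic along the path of almost complex structures). Cite it directly rather than appealing vaguely to \cite{FL} or to a reduction to $d=1$; note in particular that Theorem~\ref{sections-are-lines} alone does not obviously give injectivity, since the simultaneous isotopy it produces moves the $S_i$ as well.

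For surjectivity, your worries about singular fibres, multisections, and base loci are misplaced and make the argument harder than it is. Once the $S_i$ are algebraic lines, Proposition~\ref{claases-of-sections} (resting on \cite{MP}) realizes any class $h$ with $h^2=-1$ and $h\cdot\T=1$ (equivalently $h\cdot K=-1$) by an \emph{algebraic} line $L$; such an irreducible curve meets the generic fibre once and is therefore automatically a section of $f$, and it is disjoint from each $S_i$ by positivity of intersections of distinct irreducible holomorphic curves, since $h\cdot[S_i]=0$. No blow-down to a del Pezzo surface and no analysis near nodal fibres is required.
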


In the remaining two theorems we represent $(X;S_1,\dots,S_d)$ by
some fixed monodromy factorization 
 $t_{c_1}\cdots t_{c_{12}}$ in $\PMod(\T_d)$ {into Dehn twists}, where   $c_j\subset \T\sm Q$,  for $1\leq j\leq 12$, are simple closed curves,
 and  $Q=\{q_1,\dots,q_d\}\subset\T$ is the set of
 marked points. Here $q_i=\T\cap S_i$ for $i=1,\dots,d$.
 
 \begin{Thm}\label{factorization-general}
The set of isotopy classes $\pi_0(\Sec(X;S_1,\dots,S_d))$ 
is in one-to-one correspondence with the set of
 conjugacy classes of lifts $t_{\til c_1}\cdots t_{\til c_{12}}$ of the given factorization in $\PMod(\T_d)$
 to a factorization in $\PMod(\T_{d+1})$, {where conjugacy classes are taken with respect to elements of the kernel of the forgetful homomorphism
 $\PMod(\T_{d+1})\to\PMod(\T_{d})$.}
 \end{Thm}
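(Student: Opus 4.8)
The plan is to combine Theorem \ref{Bijection} with the standard dictionary between sections of an elliptic Lefschetz fibration and lifts of its monodromy factorization to the mapping class group of the fiber with one extra marked point. Theorem \ref{Bijection} already identifies $\pi_0(\Sec(X;S_1,\dots,S_d))$ with the homology set $\mathcal H=\{h\in H_2(X)\mid h^2=h\cdot K=-1,\ h\cdot S_i=0\}$, so the whole task reduces to producing a natural bijection between $\mathcal H$ and the set of conjugacy classes of lifts $t_{\til c_1}\cdots t_{\til c_{12}}$ of the fixed factorization $t_{c_1}\cdots t_{c_{12}}$ from $\PMod(\T_d)$ to $\PMod(\T_{d+1})$. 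I will build this bijection through the geometry of sections rather than by comparing the two sets abstractly.

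First I would recall the monodromy description of a section. A section $S\subset X$ in the complement of $S_1\cup\dots\cup S_d$ meets each fiber in one point; over the base circle $\partial D^2$ around the critical values, $S$ traces out a marked point $q_{d+1}$ added to the $d$ existing marked points $Q=\{q_1,\dots,q_d\}$. The monodromy around each vanishing cycle $c_j$ must now be read in $\PMod(\T_{d+1})$, fixing the point $q_{d+1}$; thus each Dehn twist $t_{c_j}$ lifts to a twist $t_{\til c_j}$ along a curve $\til c_j\subset\T\sm(Q\cup\{q_{d+1}\})$ isotopic to $c_j$ after forgetting $q_{d+1}$. The requirement that the lifted word again equal the identity in $\PMod(\T_{d+1})$ (i.e. that it is a genuine factorization of the boundary monodromy, which for $E(1)$ is trivial) is precisely the condition that the added marked point extends to a global section over the disc, i.e. that the arcs close up consistently. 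This is the forgetful fibration sequence $\PMod(\T_{d+1})\to\PMod(\T_d)$ whose kernel is $\pi_1$ of the punctured fiber; lifting a factorization is lifting along this sequence, and conjugation in $\PMod(\T_{d+1})$ corresponds exactly to the choice of trivialization over the basepoint.

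Concretely, the plan has three steps. \emph{Step 1:} Given a section $S$, choose the system of arcs in $S^2$ used for the fixed factorization and restrict $S$ to obtain the lifts $\til c_j$; this produces a lift of the factorization, well defined up to the conjugation coming from the ambiguity in identifying the fiber over the basepoint. \emph{Step 2:} Conversely, given a lift, the extra marked point and its monodromy reconstruct a section over $S^2\sm(\text{disc})$, and triviality of the lifted word lets me extend it across the remaining disc, yielding a section $S$; isotopic choices of closing data give isotopic sections. \emph{Step 3:} Check that these two assignments are mutually inverse on isotopy classes, i.e. that isotopy of sections corresponds exactly to conjugacy of lifts. Here Theorem \ref{sections-are-lines} is available to guarantee that every section is isotopic to an algebraic one through disjoint sections, which rigidifies the correspondence and removes any worry that exotic sections might fail to be captured by the combinatorial lift.

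The main obstacle will be Step 3, and specifically the bookkeeping of the conjugacy ambiguity. The point is that the Hurwitz equivalence is \emph{excluded} here — we work up to conjugation only — so I must verify that the isotopy relation on sections matches conjugacy and not some coarser or finer relation. The delicate direction is showing that two lifts giving isotopic sections are actually conjugate: an isotopy of sections induces a fiber-preserving isotopy of the total space that may move the basepoint fiber, and tracking the induced change on the lifted word is exactly a conjugation in $\PMod(\T_{d+1})$. I would make this precise by invoking the change-of-coordinates principle for the forgetful map together with the fact (from Theorem \ref{Bijection}) that the underlying homology class $h=[S]$ is an isotopy invariant, so that two lifts with the same homological data and the same base factorization differ only by the contractible/conjugation ambiguity. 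Once this identification of relations is secured, the bijection of Theorem \ref{factorization-general} follows by composing with the bijection of Theorem \ref{Bijection}.
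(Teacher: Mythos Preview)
Your plan is essentially the paper's proof unpacked. The paper argues in one line by citing Theorem~\ref{factorization-conjugacy} (which is exactly your Steps~1--2: the LF-monodromy correspondence identifying conjugacy classes of lifts with strong-automorphism orbits of sections) together with Theorem~\ref{sections-fixed} (homologous sections in $X\sm(S_1\cup\dots\cup S_d)$ are isotopic). So there is no genuinely different route here, only different packaging.

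That said, your Step~3 is misdiagnosed. The direction you call ``delicate'' --- isotopic sections give conjugate lifts --- is the easy one, and your own argument for it (an isotopy drags the extra marked point $q_{d+1}$ through $\T\sm Q$ and conjugates the lifted word by the corresponding element of $\ker r_{d+1}$) is correct. The direction that actually needs the $J$-holomorphic input is the converse: conjugate lifts give isotopic sections. By Theorem~\ref{factorization-conjugacy} conjugate lifts yield sections in the same strong-automorphism orbit, but a strong automorphism of $(X;S_1,\dots,S_d)$ is not a priori isotopic to the identity. What saves you is that such an automorphism covers $1_{S^2}$, hence fixes each matching cycle $\V_p$ and therefore acts trivially on $H_2(X)$; the two sections are then homologous, and Theorem~\ref{sections-fixed} (rather than Theorem~\ref{sections-are-lines}) gives the isotopy. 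Once you repair Step~3 this way, the bijection is established directly and the detour through Theorem~\ref{Bijection} and the set $\mathcal H$ becomes superfluous --- you never need to compare against $\mathcal H$ at all.
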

 
 We present explicit lists representing conjugacy classes of such lifts. To simplify notation, in what follows we abbreviate {the notation $t_c$ for Dehn twists}  to $c$.
 
\begin{Thm}\label{Main} 
Tables \ref{tab:d=1}, \ref{tab:d=2}, and \ref{tab:d=3} contain lists of $n_d$ lifts to $\PMod(\T_{d+1})$
of the following monodromy factorizations
in $\PMod(\T_d)$: 
$$(a^3b)^3 \quad\text{ for } d=1,\qquad (a_1a_2^2b_1)^3 \quad\text{ for $d=2$,} \qquad (a_1a_2a_3b_1)^3 \quad\text{ for } d=3,$$
where each isotopy class in $\pi_0(\Sec(X;S_1,\dots,S_d))$ is represented once and only once.
\end{Thm}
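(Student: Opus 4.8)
The plan is to reduce the statement, via Theorem \ref{factorization-general}, to a finite enumeration and then to match that enumeration with the classical count of lines. Theorem \ref{factorization-general} identifies $\pi_0(\Sec(X;S_1,\dots,S_d))$ with the set of conjugacy classes of lifts of the base factorization, while Theorem \ref{Bijection} (together with Theorem \ref{sections-are-lines}) identifies $\pi_0(\Sec(X;S_1,\dots,S_d))$ with the set of classes $h\in H_2(X)$ satisfying $h^2=h\cdot K=-1$ and $h\cdot S_i=0$; after blowing down $S_1,\dots,S_d$ this is exactly the set of lines on $Y_d=\cp\#(9-d)\cpb$, of cardinality $n_d$. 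Hence it suffices to show that (i) each word in Tables \ref{tab:d=1}--\ref{tab:d=3} is a genuine lift of the indicated base factorization, and (ii) the map sending a listed word to the homology class of its associated section is a bijection onto these $n_d$ classes. Since the tables contain exactly $n_d$ entries, (ii) reduces to injectivity, and injectivity reduces, through Theorem \ref{Bijection}, to computing the homology class of each entry and checking that the classes obtained are pairwise distinct.

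For the lifting mechanism I would work with the forgetful homomorphism $p\colon\PMod(\T_{d+1})\to\PMod(\T_d)$ dropping the marked point $q_{d+1}$, together with its Birman exact sequence, whose kernel is the point-pushing subgroup $\pi_1(\T\sm Q,q_{d+1})$, a free group since $\T\sm Q$ is homotopy equivalent to a wedge of circles. A lift $\til c_j$ of each vanishing cycle $c_j$ is a simple closed curve in $\T\sm(Q\cup\{q_{d+1}\})$ projecting to $c_j$; because the base product $t_{c_1}\cdots t_{c_{12}}$ equals $1$, any such choice makes $t_{\til c_1}\cdots t_{\til c_{12}}$ land in $\ker p$, and the lift condition is precisely that this element be trivial. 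Establishing (i) then amounts, for each tabulated word, to checking curve by curve that $p(\til c_j)=c_j$ and to verifying the single relation $t_{\til c_1}\cdots t_{\til c_{12}}=1$ in $\PMod(\T_{d+1})$ using the standard chain, lantern and braid relations in the mapping class group of the punctured torus.

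To obtain the conjugacy invariant driving (ii), I would read off $[S_{d+1}]\in H_2(X)$ from the monodromy data, reconstructing the total space from the vanishing cycles $\til c_j$ and the $d+1$ sections and recording the intersection numbers of $S_{d+1}$ with a fixed integral basis. For $d=1$ this yields the advertised dictionary between lifts and the $240$ roots of $E_8=K^\perp$: each lift determines, and is determined up to conjugacy by, a root, and distinct table entries give distinct roots; the analogous computation for $d=2,3$ produces the $56$ and $27$ line classes. Once these classes are pairwise distinct, the entry-to-class map is injective, and the counting argument of the first paragraph upgrades it to a bijection, which is exactly the assertion that each isotopy class is represented once and only once.

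The main obstacle is the size and bookkeeping of the verification when $d=1$, where $240$ words must each be checked against the relation $t_{\til c_1}\cdots t_{\til c_{12}}=1$ and shown to carry the correct root. I would tame this by symmetry: the group of fibrewise self-diffeomorphisms of $X$ fixing $S_1,\dots,S_d$ acts on $\pi_0(\Sec)$, and this action realizes the Weyl group action on the lines of $Y_d$, which is transitive on the $240$ roots for $d=1$ and has only a few orbits for $d=2,3$. Verifying one representative per orbit and transporting the computation by the symmetry then reduces the relation checks to a manageable number, while the homological invariant simultaneously certifies that the orbit is free enough to yield exactly $n_d$ distinct classes. The genuinely delicate point, which I would treat with care, is to confirm that these symmetries preserve the conjugacy class of the base factorization and cover the identity on $S^2$, so that the transported words are again lifts of the same base factorization, as required.
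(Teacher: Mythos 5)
Your proposal is correct and follows essentially the same route as the paper: Theorems \ref{Bijection} and \ref{factorization-general} give exactly $n_d$ conjugacy classes of lifts, Propositions \ref{factorization1+1}--\ref{factorization1+3} certify that each tabulated word is a genuine lift, and Propositions \ref{homol-distinct-d=1}--\ref{homol-distinct-d=3} distinguish the $n_d$ entries by the homology classes of the associated sections (read off from intersections with matching cycles), so the count forces the list to be complete. Your suggested bookkeeping devices (Birman exact sequence, Weyl-group transport) differ only in how the two verification steps are organized; the paper instead derives each type from the star relation by explicit substitutions and uses the $\Delta$-sequences for the homological separation.
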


In Section \ref{root-section},  we present explicitly in Table \ref{tab:permutation}
the correspondence between the 240 monodromy factorizations in Table \ref{tab:d=1} and the roots of $E_8$.
One motivation for finding the correspondence with the root systems is its application indicated
in Section~\ref{concluding}, where we give more details on the action of Hurwitz moves on the factorizations.
On the one hand, we give a method to transform factorizations
in Table \ref{tab:d=1} into each other. On the other hand, we show how these moves transform our list of factorizations based on the liftings of $(a^3b)^3=1$
into lists of factorizations based on the liftings of other monodromy factorizations of $E(1)$, for instance, $(ab)^6=1$.

\subsection{Conventions and terminology}\label{conventions}
Throughout the paper, Lefschetz fibrations $f\colon X\to S^2$ are assumed to be relatively minimal, 
with only one-nodal singular fibres (aka {\it fishtails}), {of which we will assume there is at least one,}
and sections of $f$ are always smooth
(unless, in the algebraic setting, it is indicated explicitly that sections are algebraic).
We will not make a terminological distinction between sections $s\colon S^2\to X$
and their images $S=s(S^2)\subset X$.

By {\it lines} on elliptic LFs or on del Pezzo surfaces we mean rational embedded curves of square $-1$. The notation $\T$ is used for non-singular elliptic fibres (tori).

We denote by
 $F_n$ a surface $F$ with $n$ ordered marked points and by $\PMod(F_n)$ its {\it pure mapping class group}
fixing each marked point. We denote by $r_n\colon\PMod(F_{n})\to\PMod(F_{n-1})$ the {\it forgetful homomorphism} given by forgetting the last marked point in $F_{n}$.

We refer the reader to \cite{AS} for standard generalities concerning LFs, and mention below just
some special assumptions and set the notation.

\subsection{Acknowledgements} 
We thank V. Schevchishin for helpful discussions and in particular
communicating to us a scheme of proof of Theorem \ref{sections-are-lines}. {We would also like to thank the referee for many
useful comments.}

\section{Preliminaries}\label{Prelim}

\subsection{Monodromy factorization of Lefschetz fibrations}\label{subsec:monodromy}
Given a Lefschetz fibration $f\colon X\to S^2$, denote by  $\D\subset S^2$ its {\it discriminant set} formed by
its critical values. Fix a {\it basepoint} $\b\in S^2\sm\D$ and 
a collection of smoothly embedded arcs $\{A_i\}_{i=1}^n$ connecting $\b\in S^2$ with the points of $\D$, without
intersection points other than at the basepoint $\b$.
As is known, $f$ can be described by its {\it monodromy factorization} $t_{c_n}\cdots t_{c_1}=1_F$,
where the $t_{c_i}\in \Mod(F)$ are Dehn twists in the mapping class group of $F=f^{-1}(\b)$ about the simple closed curves $c_i\subset F=f^{-1}(\b)$, 
which are the  (well-defined up to isotopy) vanishing 1-cycles
of nodal degenerations of $f^{-1}(t)$ as $t$ approaches $\D$ along $A_i$.

\begin{figure}[h!]
\includegraphics[height=4cm]{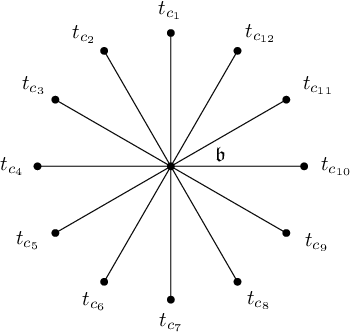}
\caption{A system of arcs in $S^2$ representing the monodromy factorization $t_{c_{12}}\cdots t_{c_1}$.}  
\label{fig:sing-scheme}
\end{figure} 
The counter-clockwise ordering of arcs about $\b$  gives an ordering of the curves $c_1,\dots,c_n$ (see Fig.~\ref{fig:sing-scheme}).
Note that the twists $t_{c_i}$ appear in the reverse order in the factorization
due to the functional notation for the product being used ($t_at_b=t_a\circ t_b$).

An {\it isomorphism} between two LFs $f_i\colon X_i\to S^2$, $i=1,2$, is a diffeomorphism $\Phi\colon X_1\to X_2$ commuting with the $f_i$ and some diffeomorphism $\phi\colon S^2\to S^2$.
The {\it LF-monodromy correspondence} theorem (based on \cite{EE}, and elaborated in \cite{M} and 
\cite{Mt}) provides a one-to-one correspondence
\[\text{LFs/isomorphisms}\overset{1-1}\longleftrightarrow\text{ monodromy factorizations/equivalence}\]
where equivalence of monodromy factorizations is generated by conjugations (replacing each $t_{c_i}$ by $t_{g(c_i)}=gt_{c_i}g^{-1}$ for some fixed $g\in\Mod(F)$)
and Hurwitz moves.
Recall that a Hurwitz move is a transformation of the monodromy factorization that corresponds to some elementary modification of the systems of arcs.
Namely, such a move replaces a pair of consecutive Dehn twists $t_x$ and $t_y$
in a factorization by either the pair $t_y$ and $t_y^{-1}t_xt_y$, or $t_xt_yt_x^{-1}$ and $t_x$.
Note that, in particular, {\it cyclic shifts}
replacing the factorization $t_{c_1}\cdots t_{c_n}$ of $1_F$ by 
$t_{c_k}\cdots t_{c_{k-1}}$, $2\le k\le n$, are {\it Hurwitz equivalent}, that is, obtained from each other by several Hurwitz moves.

If, for an  isomorphism $\Phi$ between LFs $f_i\colon X_i\to S^2$, $i=1,2$, we require $f_2\circ \Phi=f_1$ (or, in other words, $\phi=1_{S^2}$), we say that $\Phi$ is a {\it strong isomorphism}. 
Strong isomorphism implies, in particular, that $f_1$ and $f_2$ have the same discriminant set, and thus we can choose 
a common system of arcs and compare the associated monodromy factorizations.

A version of the LF-monodromy correspondence in this setting states:
\[\tag{*}
\label{mfc}
\begin{aligned}
&\text{$f_1$ and $f_2$ are strongly isomorphic} \Longleftrightarrow 
\text{their monodromy factorizations for}\\
&\text{the same system of arcs are conjugate by a
diffeomorphism }\  f_1^{-1}(\b)\cong f_2^{-1}(\b).
\end{aligned}
\]

Note that a {\it strong automorphism} of $X$, that is, a strong isomorphism from $X$ to itself, 
must preserve the orientation of $X$ (equivalently, of its fibres), since reversing the orientation transforms positive Dehn twists into negative ones.

\subsection{Lefschetz fibrations with sections}
Endowing an LF $f\colon X\to S^2$ 
with $d$ disjoint sections $S_1,\ldots,S_d\subset X$ gives a lifting of its monodromy representation
$\pi_1(S^2\sm\D,\b)\to\Mod(F)$ to $\PMod(F_d)$, where 
$F=f^{-1}(\b)$ is a nonsingular fibre and the lifting is taken with respect to the
forgetful homomorphism $\PMod(F_d)\to\Mod(F)$. In particular, a monodromy factorization
$t_{c_1}\cdots t_{c_n}$ of $1_F\in\Mod(F)$
is lifted to a factorization $t_{\til c_1}\cdots t_{\til c_n}$ of $1_F\in\PMod(F_d)$. Here the simple closed curve $\til c_i\subset F\sm Q$, where $Q=F\cap(S_1\cup\dots\cup S_d)$,
is isotopic to $c_i$ in $F$ for $1\leq i\leq n$.

The LF-monodromy correspondence (\ref{mfc})
holds for LFs with sections, namely,
for a pair of LFs $f_i\colon X_i\to S^2$, $i=1,2$,  endowed with disjoint sections $S_1^i,\dots,S_d^i\subset X_i$, if  from a strong isomorphism
$\Phi\colon X_1\to X_2$ we additionally require that $\Phi(S_k^1)=S^2_k$, $k=1,\dots,d$, and from a conjugating diffeomorphism
$f_1^{-1}(\b)\cong f_2^{-1}(\b)$ we require preserving of the marked points.
Such an extension is justified by the extension of
the contractibility theorem for $\Diff_0(F)$ in \cite{EE}
to  $\Diff_0(F_m)$; see \cite{ES}.

Another consequence of the LF-monodromy correspondence describes the effect of adding one more section
$S_{d+1}\subset X\sm(S_1\cup\dots\cup S_d)$.

\begin{Thm}\label{factorization-conjugacy}
Assume that $f\colon X\to S^2$ is an LF endowed with a set of disjoint sections
$S_1,\dots, S_d\subset X$,
and $t_{c_1}\cdots t_{c_n}=1\in \PMod(F_{d})$ is
 its monodromy factorization, where the $c_i$ are simple closed curves in  $F\sm Q$. 
Then, by
associating to a section $S_{d+1}\in\Sec(X;S_1,\dots,S_d)$ a lifting $t_{\til c_1}\cdots t_{\til c_n}=1\in \PMod(F_{d+1})$
of the factorization $t_{c_1}\cdots t_{c_n}$, we obtain a well-defined one-to-one correspondence between
\begin{itemize}
\item 
the set of orbits in $\Sec(X;S_1,\dots,S_d)$ of the action
of the group of strong automorphisms $\{\Phi\colon X\to X\,|\,\Phi(S_i)=S_i, i=1,\dots,d\}$, and
\item
the set of conjugacy classes of liftings $t_{\til c_1}\cdots t_{\til c_n}$,
under simultaneous conjugation of all $t_{\til c_i}$  by elements of the kernel of $r_{d+1}:\PMod(F_{d+1})\to\PMod(F_d)$.
\end{itemize}
\end{Thm}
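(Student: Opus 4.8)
The plan is to read this off from the LF-monodromy correspondence (\ref{mfc}) for LFs with sections, combined with the Birman exact sequence for the forgetful homomorphism $r_{d+1}$. First I would pin down the lift map. A section $S_{d+1}\in\Sec(X;S_1,\dots,S_d)$ enlarges the marked set to $Q\cup\{q_{d+1}\}$, where $q_{d+1}=F\cap S_{d+1}$, and the monodromy of $(X;S_1,\dots,S_d,S_{d+1})$ for the \emph{same} system of arcs is a factorization $t_{\til c_1}\cdots t_{\til c_n}$ in $\PMod(F_{d+1})$ with $r_{d+1}(t_{\til c_1}\cdots t_{\til c_n})=t_{c_1}\cdots t_{c_n}$. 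To regard this as an element of $\PMod(F_{d+1})$ for a \emph{fixed} marked set I would identify $q_{d+1}$ with a reference puncture along an arc in $F\sm Q$; a different arc alters the lift by a point-pushing map, so the construction is canonical exactly up to conjugacy by $\ker r_{d+1}$, which by the Birman exact sequence is the point-pushing subgroup $\pi_1(F\sm Q)$.

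Next I would show the map descends to orbits of the automorphism group $G=\{\Phi\mid\Phi(S_i)=S_i\}$. If $\Phi\in G$ carries $S_{d+1}$ to $S'_{d+1}$, then, covering $1_{S^2}$, it fixes every arc and conjugates the two lifts by the fibre map $g=\Phi|_F$, which fixes $q_1,\dots,q_d$. The point is that $g$ may be taken in $\ker r_{d+1}$: since $\Phi$ is an automorphism of the \emph{same} fibration, $g$ commutes with the whole monodromy, hence $\bar g=r_{d+1}(g)$ fixes each vanishing cycle $c_i$; as the $c_i$ fill the fibre, $\bar g$ lies in the finite symmetry group of this filling, and imposing $\bar g(q_i)=q_i$ for all $i$ forces $\bar g=1$, so $g\in\ker r_{d+1}$. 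Thus sections in one $G$-orbit produce $\ker r_{d+1}$-conjugate lifts.

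For injectivity I would reverse this: a $\ker r_{d+1}$-conjugacy between lifts is realized by a point-pushing diffeomorphism of $F$ fixing $Q$, and by the correspondence (\ref{mfc}) with sections it integrates to a strong automorphism in $G$ taking one section to the other, so the sections share a $G$-orbit. For surjectivity I would invoke the reconstruction direction of (\ref{mfc}): any lift $t_{\til c_1}\cdots t_{\til c_n}=1$ in $\PMod(F_{d+1})$ assembles an $E(1)$ Lefschetz fibration with $d+1$ disjoint sections whose underlying $d$-section fibration has monodromy $t_{c_1}\cdots t_{c_n}$; the uniqueness clause identifies it with $(X;S_1,\dots,S_d)$ by a strong isomorphism, and carrying the extra section across yields a genuine $S_{d+1}\subset X\sm(S_1\cup\dots\cup S_d)$ inducing the prescribed lift.

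The step I expect to be the crux is forcing $\bar g=1$ in the second paragraph, i.e.\ ruling out that a fibrewise automorphism fixing all sections induces a nontrivial symmetry of the base factorization; this is exactly what separates conjugacy by $\ker r_{d+1}$ from conjugacy by the full preimage of the stabiliser of the factorization in $\PMod(F_d)$. Here the geometry of $E(1)$ is essential: one needs that the twelve vanishing cycles fill the torus fibre and that the only residual finite symmetry, the elliptic involution, moves the marked points unless they sit at its fixed (two-torsion) locus. I would therefore isolate this as a lemma computing the stabiliser of $t_{c_1}\cdots t_{c_n}$ in $\PMod(F_d)$ and checking that the constraint $\Phi(S_i)=S_i$ trivialises it.
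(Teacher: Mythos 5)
Your skeleton coincides with the paper's: well-definedness and injectivity are read off from the marked-point version of the LF-monodromy correspondence (\ref{mfc}), and surjectivity comes from realizing an arbitrary lift by some LF with $d+1$ sections and transporting the extra section back to $X$ through the strong isomorphism that (\ref{mfc}) supplies. Your first and third paragraphs reproduce this argument, and your remark that the lift is canonical only up to point-pushing (the Birman kernel of $r_{d+1}$) makes explicit what the paper leaves implicit.

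The problem is the step in your second paragraph, which you yourself flag as the crux: the claim that $\bar g=r_{d+1}(\Phi|_F)$ centralizes every $t_{c_i}$, fixes $q_1,\dots,q_d$, and is therefore trivial. For a torus fibre with few marked points this fails. The elliptic involution acts as $-1$ on $H_1(\T)$, hence fixes every isotopy class of unoriented simple closed curves and centralizes the entire factorization; it is the nontrivial central element $-I$ of $\PMod(\T_1)\cong\SL_2(\Z)$, and placing the marked points at its two-torsion fixed locus (possible for the values of $d$ considered here) it also fixes $q_1,\dots,q_d$. Moreover it is realized by a genuine strong automorphism of $E(1)$ preserving $S_1$, namely fibrewise inversion with $S_1$ as zero-section, which carries a section with root $e\in E_8$ (Corollary \ref{sec-roots}) to one with root $-e$, i.e.\ to a non-homologous section whose lift is \emph{not} $\ker r_{d+1}$-conjugate to the original; this is exactly the reversion symmetry of Section \ref{symmetries} interchanging Types $k$ and $k$* in Table \ref{tab:d=1}. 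So the lemma you propose to isolate (``the constraint $\Phi(S_i)=S_i$ trivialises the stabiliser'') is false, and the well-definedness direction cannot be closed along these lines. In fairness, the paper's own proof is a one-line appeal to (\ref{mfc}) that does not confront this point either; the statement really requires the strong automorphisms to be fibrewise isotopic to the identity (equivalently, $\Phi|_F$ trivial in $\PMod(F_d)$), which is the form in which the result is actually used via Theorems \ref{sections-fixed} and \ref{factorization-general}.
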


\begin{proof}
The LF-monodromy correspondence (\ref{mfc}) shows that the correspondence is a well-defined and injective map.
To prove its surjectivity, for any given lifting $t_{\til c_1}\cdots t_{\til c_n}=1\in \PMod(F_{d+1})$
we consider an LF $f'\colon X'\to S^2$ with disjoint sections $S_1',\dots,S_{d+1}'\subset X'$ representing it.
The correspondence (\ref{mfc}) guarantees the existence of
an isomorphism $\Phi\colon X\to X'$ such that $\Phi(S_i)=S_i'$, $i=1,\dots,d$, and after letting $S_{d+1}=\Phi^{-1}(S_{d+1}')$ we obtain a required section of $X$
whose lifting factorization is the given one.
\end{proof}

\subsection{Lefschetz thimbles and matching cycles}\label{matching}
Let $f\colon X\to S^2$ be an LF and 
consider an {\it arc} (i.e., smoothly embedded path)
$p\subset S^2$ connecting a regular value $s\in S^2$
with a critical value $\delta\in\D$.
The vanishing cycle $c\subset f^{-1}(s)$ of the nodal degeneration of $f^{-1}(t)$ as $t\in p$ approaches $\delta$
bounds  a smoothly embedded disc
$D_p\subset f^{-1}(p)$ traced by a collection of loops in $f^{-1}(p)$ collapsing to the node of $f^{-1}(\delta)$.
The disc $D_p$ is called a {\it Lefschetz thimble} along $p$.
If $X$ is endowed with
a set of disjoint smooth sections $S_1,\dots, S_d\subset X$,
then we can choose a Lefschetz thimble $D_p$ in their complement,
since sections cannot pass through the critical points.

Consider now a smoothly embedded arc $p\subset S^2$ connecting two distinct singular values $\d_0,\d_1\in\D(f)$
but otherwise disjoint from $\D(f)$.
The choice of an interior point $s\in p$
splits $p$ into a pair of arcs, $p=p_0\cup p_1$,
connecting $s$ with $\d_0$ and $\d_1$. Consider the vanishing cycles $c_i\subset  f^{-1}(s)$ bounding Lefschetz thimbles $D_{p_i}$, $i=0,1$,
in the complement of a set of fixed sections $S_1,\dots,S_d$.
The arc $p$ is called a {\it matching path} with respect to the disjoint sections $S_1,\dots,S_d$
if $c_0$ is isotopic to $c_1$ in $f^{-1}(s)$ {away from $S_1,\ldots,S_d$} (cf. \cite{S}).
In this case, we can choose the vanishing $1$-cycles so that $c_0=c_1$ to obtain a $2$-sphere $\V_p=D_{p_1}\cup D_{p_2}$,
called the {\it matching cycle} over $p$. 
The class $[\V_p]\in H_2(X\sm(S_1\cup\dots\cup S_d))\subset H_2(X)$ is called the {\it matching class} over $p$.
It is known that $\V_p^2=-2$.

Prescribing an orientation on $\del D_{p_0}=\del D_{p_1}$ will induce orientations on $D_{p_0}$ and $D_{p_1}$.
We orient $\V_p$ to match the orientation of $D_{p_0}$ and $-D_{p_1}$. So, the chosen orientation of $\V_p$ will depend on that of $\del D_{p_0}=\del D_{p_1}$
and of $p$ (which determines the direction from $\delta_0$ to $\delta_1$).

\subsection{Elliptic Lefschetz fibrations}\label{sec:elliptic}
It is well known that an elliptic LF $f\colon X\to S^2$ has
Euler characteristic $\chi(X)=12n$ for some $n\ge1$; see \cite{M}.  In particular,
 $f$ has $12n$ singular fibres, and 
its monodromy factorization has $12n$ factors $t_{c_i}$.
Such $X$ are said to be of type $E(n)$.
Moishezon proved \cite[Theorem 9]{M} that $n$ characterizes $X$ up to
isomorphism.

\begin{Thm}\label{Hurwitz-equivalence}
Let $f_i\colon X_i\to S^2$, $i=1,2$, be LFs of type $E(n)$, {and let $\b_i\in S^2$ be basepoints.}
Then any identification of the basic fibres {$f_i^{-1}(\b_i)\subset X_i$} can be extended to
an isomorphism $\Phi\colon X_1\to X_2$. In particular, 
arbitrary monodromy factorizations of the $f_i$ 
are Hurwitz-equivalent with respect to any chosen identification of basic fibres.
\qed\end{Thm}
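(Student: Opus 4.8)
The plan is to derive both assertions from Moishezon's classification \cite{M} and the LF-monodromy correspondence (\ref{mfc}), the one extra ingredient being that every mapping class of the fibre is induced by some LF-isomorphism. First I would fix a prescribed identification $g\colon F_1\to F_2$ of basic fibres $F_i=f_i^{-1}(\b_i)$. By Moishezon's theorem there is some LF-isomorphism $\Psi\colon X_1\to X_2$, covering a diffeomorphism $\psi\colon S^2\to S^2$ with $\psi(\b_1)=\b_2$; let $\alpha=[\,(\Psi|_{F_1})^{-1}\circ g\,]\in\Mod(\T)=\SL(2,\Z)$ record the discrepancy between $g$ and the fibre identification carried by $\Psi$. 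It then suffices to produce a self-diffeomorphism $\Theta\colon X_1\to X_1$ that commutes with $f_1$ up to a diffeomorphism of $S^2$ fixing $\b_1$ and restricts to $\alpha$ on $F_1$; then $\Phi=\Psi\circ\Theta$ is an LF-isomorphism with $\Phi|_{F_1}=g$, which proves the first assertion.

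To build $\Theta$ I would exploit the surjectivity of the monodromy representation $\rho\colon\pi_1(S^2\sm\D,\b_1)\to\Mod(\T)$, a standard feature of type $E(n)$ reflecting the simple-connectivity of $X_1$ (the vanishing cycles normally generate $\pi_1(\T)$, so the twists along them generate $\SL(2,\Z)$). Writing $\alpha$ as a product of the images under $\rho$ of the standard loops $\g_j$ each encircling a single critical value, it is enough to realize one such $\rho(\g_j)$ at a time. For an embedded loop $\g_j$ I would choose an annular neighbourhood $A_j\subset S^2\sm\D$ of $\g_j$ and let the base undergo a full annular (Dehn) twist of $A_j$, extended by the identity; its defining isotopy fixes $\b_1$ and drags it once around $\g_j$. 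Taking the horizontal lift of this isotopy for any Ehresmann connection on the genuine torus bundle $f_1^{-1}(A_j)\to A_j$ yields a fibre-preserving self-map that equals $\rho(\g_j)$ on $F_1$ and the identity near $\partial A_j$; extended by the identity it is the required building block, and $\Theta$ is the composition of these. I expect this geometric realization to be the main obstacle — one must verify that parallel transport is well defined and that the successive lifts glue smoothly — and here I would lean on the contractibility of $\Diff_0(F)$ from \cite{EE}.

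For the Hurwitz-equivalence statement I would fix once and for all identifications of the basic fibres of $f_1$ and $f_2$ with $\T$ and, using the first part, select $\Phi$ compatible with them. Transporting an arc system and its monodromy factorization $W_2$ of $f_2$ back through $\Phi$ produces a monodromy factorization of $f_1$ which, read in $\Mod(\T)$ through the fixed identification, is literally $W_2$; since $\Phi$ restricts to that identification, the conjugating fibre diffeomorphism in (\ref{mfc}) is the identity, so no conjugation enters. Comparing with the given factorization $W_1$ of $f_1$, the two now differ only by the change of arc system induced by the base diffeomorphism covered by $\Phi$, and such changes are generated by Hurwitz moves (including the cyclic shifts already observed to be Hurwitz-equivalent). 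Hence $W_1$ and $W_2$ are Hurwitz-equivalent with respect to the chosen fibre identification, as claimed.
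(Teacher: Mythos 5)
Your route differs from the paper's. The paper offers no proof beyond the citation of Moishezon, because Moishezon's theorem is already the algebraic statement that any factorization of $1\in\Mod(\T)=\SL_2(\Z)$ into $12n$ positive Dehn twists is Hurwitz-equivalent to $(t_at_b)^{6n}$ with no global conjugation; both assertions then fall out of the LF-monodromy correspondence (\ref{mfc}). You instead take only the geometric uniqueness of $E(n)$ as input and upgrade it by realizing an arbitrary mapping class of the fibre by an LF-automorphism of $X_1$, using surjectivity of the monodromy $\rho\colon\pi_1(S^2\sm\D,\b)\to\Mod(\T)$ (which does hold for $E(n)$, since $t_a$ and $t_b$ occur among the local monodromies). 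That strategy is viable, and your reduction of the Hurwitz-equivalence statement to the first assertion is fine, but your construction of the building block over the annulus has a concrete flaw.

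The flaw: the horizontal lift of the defining isotopy of an annular Dehn twist of $A_j$ is \emph{not} the identity near both components of $\partial A_j$. In the model $A_j=S^1\times[0,1]$ with $\phi_t(\theta,s)=(\theta+2\pi t\beta(s),s)$, where $\beta\equiv 0$ near $s=0$ and $\beta\equiv 1$ near $s=1$, the trajectory of a point with $s$ near $1$ is a full loop around that boundary circle, so the time-one lift over that component is the parallel transport around the circle, i.e.\ its monodromy. Since $\g_j$ encircles the critical value $\delta_j$, one boundary circle has monodromy the nontrivial twist $t_{c_j}$ and the other has monodromy $t_{c_j}^{-1}$ (the product of the remaining local monodromies), so no choice of side helps; ``extended by the identity'' therefore produces a discontinuous map. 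The standard repair is to use a point-pushing isotopy instead: an isotopy supported in the open annulus, equal to the identity near $\partial A_j$ for \emph{all} $t$, dragging $\b$ once around the core. Its horizontal lift is then genuinely the identity near $f_1^{-1}(\partial A_j)$, extends by the identity, covers a diffeomorphism of $(S^2,\D)$ fixing $\b$, and restricts to $\rho(\g_j)$ on $F_1$. (Alternatively, keep your Dehn-twist isotopy but extend its lift over the inner disc using the fact that the boundary monodromy of a Lefschetz fibration over a disc extends to a fibre-preserving automorphism of its total space.) With that substitution, and the routine collar adjustment needed to realize the identification $g$ on the nose rather than up to isotopy, the rest of your argument goes through.
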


Recall also the following well-known facts.
\begin{itemize}
\item
$E(n)$ is a fibre connected sum of $n$ copies of $E(1)$, where the latter are rational elliptic LFs diffeomorphic to $\cp\#9\cpb$.
\item
$c_1(E(n))$ is  Poincar\'e-dual to a fibre multiple $(2-n)[\T]$. 
\item
Any section in $E(n)$ has self-intersection number $-n$.
\end{itemize}

By Moishezon's Theorem \ref{Hurwitz-equivalence}, any elliptic LF $f\colon X\to S^2$ can be endowed with the structure of an elliptic surface,
which determines a group
$\Trans(X)$ formed by strong automorphisms acting on the fibres as translations with respect to the affine elliptic group structures
(see \cite[Sec.~1.3]{FL} for details).
The set of sections $\Sec(X)$ is a torsor with respect to the natural action of $\Trans(X)$, which 
implies the following.

\begin{Prop}\label{fiberwise-isotopy}
For any pair of disjoint sections $S_1,S_2$ in an elliptic LF $f\colon X\to S^2$ 
 there exists $\Phi\in\Trans(X)$
such that 
$\Phi(S_1)=S_2$.
\qed
\end{Prop}

Now we can deduce a version of Theorem \ref{Hurwitz-equivalence} for pairs $(X_i,S_i)$.

\begin{Thm}\label{H-eq-with-sec}
Under the assumptions of Theorem \ref{Hurwitz-equivalence}, consider any sections $S_i\in\Sec(X_i)$.
Then an isomorphism $\Phi\colon X_1\to X_2$ can be chosen so that $\Phi(S_1)=\Phi(S_2)$
and $\Phi|_{f_1^{-1}(\b_1)}$ is isotopic to any chosen
identification of the basic fibres $f_i^{-1}(\b_i)$.
 In particular, 
arbitrary monodromy factorizations of the $f_i$ in $\PMod(\T_1)$
are Hurwitz-equivalent with respect to any chosen identification of basic fibres.
\end{Thm}
\begin{proof}
We compose an isomorphism $\Phi\:X_1\to X_2$ from
Theorem \ref{Hurwitz-equivalence} with $T\in\Trans(X_2)$ sending $\Phi(S_1)$ to $S_2$, see
Proposition \ref{fiberwise-isotopy}.
\end{proof}

\subsection{Del Pezzo surfaces}
Let $S_1,\dots,S_d$ be disjoint sections of a rational elliptic LF $f\colon X\to S^2$.
As the sections are $(-1)$-spheres, we can contract them to obtain a
a {\it topological del Pezzo surface} $Y\cong \cp\#(9-d)\cpb$ of degree $d$.
The fibration $f$ descends to
 an elliptic {\it topological Lefschetz pencil} $f_Y\colon Y\dashrightarrow\CP1$ (in the sense of \cite{D})
with $d$ basepoints. It is {\it generic}, that is, its fibres (inherited from $f$) have at most one singularity, which is nodal and away from the basepoints.

Conversely, given $Y\cong \cp\#(9-d)\cpb$
endowed with a generic elliptic Lefschetz pencil
with $d$ basepoints, by blowing up the basepoints we obtain a rational elliptic LF
 supplied with a collection of $d$ disjoint sections.

\subsection{Homology classes of sections}
As before, we suppose below that $f\colon X\to S^2$ is an LF of type $E(1)$.

\begin{Prop}\label{classes-of-sections}
A homology class $h\in H_2(X)$ 
can be realized by a section if and only if $h^2=-1$ and $h\cdot \T=1$.
\end{Prop}

\begin{proof}
The necessity of $h^2=-1$ was mentioned in Section \ref{sec:elliptic}, while $h\cdot\T=1$ is evident.  
To prove realizability  by sections of classes satisfying $h^2=-1$ and $h\cdot\T=1$, 
by Moishezon's Theorem \ref{Hurwitz-equivalence},
it is enough
to consider an algebraic rational elliptic surface,
in which case realizability is well known \cite[Cor.~13]{MP}.
\end{proof}

Let us fix a section $S$ and a nonsingular fibre $\T$ in $X$, then
the orthogonal complement $\L=\la S, \T\ra ^\perp\subset H_2(X)$
is $E_8$. This is since the sublattice $\la S,\T\ra\subset H_2(X)$ spanned by
 $[S]$ and $[\T]$ is unimodular, indefinite and
contains the characteristic element $[\T]$ of $H_2(X)$, which implies that $\L$ is even, negative definite, unimodular and of rank $8$.
Using the decomposition $H_2(X)=\la S\ra\oplus\la\T\ra\oplus E_8$, and expressing for $h=x[S]+y[\T]+v$ the conditions  $h^2=-1$ and $h\cdot\T=1$,
we obtain the following.

\begin{Cor}\label{sec-classes}
A class $h\in H_2(X)$ is represented by some section in $X$ 
if and only if 
\[\pushQED{\qed} 
h=[S]+n[\T]+v,\quad n=\frac{-v^2}2\quad
 \text{for some}\ v\in E_8.
\qedhere\popQED
\]
\end{Cor}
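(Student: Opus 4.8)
The plan is to start from Proposition \ref{claases-of-sections}, which already establishes that a class $h\in H_2(X)$ is represented by a section precisely when $h^2=-1$ and $h\cdot\T=1$, and to translate these two numerical conditions into the coordinates provided by the orthogonal decomposition $H_2(X)=\la S\ra\oplus\la\T\ra\oplus E_8$. First I would record the intersection form in these coordinates: since $[S]$ is a section of $E(1)$ we have $[S]^2=-1$, and $[S]\cdot[\T]=1$ because a section meets each fibre once; moreover $[\T]^2=0$ as the fibre is a torus of square zero, and both $[S]$ and $[\T]$ are orthogonal to the $E_8$ summand $\L$ by construction. Writing a general class as $h=x[S]+y[\T]+v$ with $v\in E_8=\L$, I would then compute $h\cdot\T$ and $h^2$ directly from bilinearity.

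The computation of $h\cdot\T$ gives $x([S]\cdot[\T])+y[\T]^2+v\cdot[\T]=x$, so the condition $h\cdot\T=1$ forces $x=1$. Substituting $x=1$, the self-intersection expands as $h^2=[S]^2+2y([S]\cdot[\T])+y^2[\T]^2+v^2=-1+2y+v^2$, using that the cross terms with $v$ vanish by orthogonality. Setting $h^2=-1$ then yields $2y+v^2=0$, i.e. $y=-v^2/2$. Since $\L=E_8$ is an even lattice, $v^2$ is an even integer for every $v\in E_8$, so $y=-v^2/2$ is automatically an integer and the formula in the Corollary is well defined. This exactly reproduces the stated parametrization $h=[S]+n[\T]+v$ with $n=-v^2/2$, and conversely every such $h$ satisfies $h^2=-1$ and $h\cdot\T=1$ by the same two identities run backwards.

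I do not anticipate any serious obstacle here, as the result is purely a coordinate rewriting of Proposition \ref{claases-of-sections}; the only point requiring a word of justification is that the coefficients $[S]^2=-1$, $[S]\cdot[\T]=1$, $[\T]^2=0$ and the orthogonality $[S]\cdot v=[\T]\cdot v=0$ are the correct entries, all of which follow from the definition of $\L$ as $\la S,\T\ra^\perp$ together with the facts about sections and fibres recalled in Section \ref{sec:elliptic}. The mildly substantive observation, rather than an obstacle, is the integrality of $n=-v^2/2$, which is precisely where the evenness of $E_8$ enters and is what makes the clean closed form possible. Thus the proof amounts to the two bilinear expansions above followed by solving the resulting linear equations for $x$ and $y$.
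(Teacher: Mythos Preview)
Your proposal is correct and follows exactly the approach indicated in the paper: write $h=x[S]+y[\T]+v$ using the decomposition $H_2(X)=\la S\ra\oplus\la\T\ra\oplus E_8$, then solve $h\cdot\T=1$ and $h^2=-1$ for $x$ and $y$. Your added remark that the evenness of $E_8$ guarantees $n=-v^2/2\in\Z$ is a helpful clarification the paper leaves implicit.
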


\begin{Cor}\label{sec-roots}
A class $h\in H_2(X)$ is represented by some section in $X$ 
disjoint from $S$ 
if and only if 
\begin{equation*} 
	h=[S]+[\T]+e,\quad \text{for some}\ e\in E_8,\ \text{ with }\ e^2=-2.
\end{equation*}
In particular, there are $240$ such classes $h$ corresponding to the $240$ roots in $E_8$.
\end{Cor}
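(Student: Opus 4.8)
The plan is to specialize Corollary \ref{sec-classes} to the disjointness condition and then read off the count from the structure of the root system $E_8$. First I would start from the characterization in Corollary \ref{sec-classes}: a class $h$ is represented by a section if and only if $h=[S]+n[\T]+v$ with $v\in E_8$ and $n=-v^2/2$. The additional geometric hypothesis is that the represented section be disjoint from the fixed section $S$, so I would translate this into the homological condition $h\cdot[S]=0$. Since the section $S$ is a $(-1)$-sphere in $E(1)$ we have $[S]^2=-1$, and in the orthogonal decomposition $H_2(X)=\la S\ra\oplus\la\T\ra\oplus E_8$ we have $[S]\cdot[\T]=1$, $[S]\cdot v=0$ for all $v\in E_8$. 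Computing $h\cdot[S]=x[S]^2+n[\T]\cdot[S]+v\cdot[S]$ with $x=1$ gives $h\cdot[S]=-1+n$, so disjointness forces $n=1$.

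Next I would feed $n=1$ back into the constraint $n=-v^2/2$ from Corollary \ref{sec-classes}, which yields $v^2=-2$. Writing $e:=v$, this gives exactly the stated form $h=[S]+[\T]+e$ with $e\in E_8$ and $e^2=-2$. Conversely, any such $e$ produces, via Corollary \ref{sec-classes} with $n=-e^2/2=1$, a class represented by a section, and the computation $h\cdot[S]=-1+1=0$ shows this section is disjoint from $S$; so the correspondence is an honest bi-implication, not merely a necessary condition.

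Finally I would invoke the standard fact that the vectors of square $-2$ (equivalently, norm $2$ under the positive-definite convention) in the $E_8$ lattice are precisely its $240$ roots. Since the map $e\mapsto[S]+[\T]+e$ is manifestly injective, the $240$ roots are in bijection with the classes $h$ of the required form, giving the asserted count.

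I do not anticipate a serious obstacle here: the entire argument is a linear-algebra specialization of the preceding corollary together with a citation for the number of roots of $E_8$. The only point demanding any care is bookkeeping the intersection numbers in the fixed orthogonal decomposition, in particular remembering that $[S]^2=-1$ (so that $n=1$ rather than $n=0$ emerges from disjointness) and that $[S]\cdot[\T]=1$; a sign slip there would misidentify the coefficient of $[\T]$. Everything else is immediate from Corollary \ref{sec-classes}.
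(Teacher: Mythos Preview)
Your overall approach---specializing Corollary~\ref{sec-classes} by imposing $h\cdot[S]=0$, solving for $n=1$ and hence $v^2=-2$, and then invoking the root count of $E_8$---is exactly what the paper intends; it records only \qed\ after the statement.

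There is, however, one step in your converse direction that does not stand as written. From $h\cdot[S]=0$ you conclude that ``this section is disjoint from $S$'', but vanishing of the algebraic intersection number does not force a given smooth section representing $h$ to miss $S$ geometrically; the two could a priori meet in cancelling pairs of points. What you actually need is \emph{some} representative of $h$ disjoint from $S$. The fix, implicit in the paper via the proof of Proposition~\ref{claases-of-sections}, is to pass to the algebraic setting: realize $h$ by a line $L$ on an algebraic model of $X$, and (using Theorem~\ref{sections-are-lines}) isotope $S$ to a line $L_0$ as well. Since $L$ and $L_0$ are distinct irreducible holomorphic curves with $L\cdot L_0=0$, positivity of intersections forces $L\cap L_0=\emptyset$; carrying $L$ back along the ambient isotopy then yields a section in class $h$ disjoint from the original $S$. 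Without this step your ``if'' direction is incomplete.
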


\begin{proof}
The ``only if'' part follows from
Corollary \ref{sec-classes} and $S\cdot h=0$. For the ``if'' part,
we equip $X$ with the structure of an elliptic surface and choose lines $L,L'\subset X$
realising, respectively, the classes $[S],h\in H_2(X)$. 
The translation $g\in\Trans(X)$ sending $S$ to $L$ acts as the identity on $H_*(X)$ (see, e.g., \cite[Sec.~1.5]{FL} for the Eichler transformation).
Then, $g^{-1}(L')$ is a required section of $X$.
\end{proof}

The following fact is well-known in algebraic setting. In the case of fibrewise diffeomorphisms the proof is essentially the same.

\begin{Prop}\label{fiberwise-automorphism}
Any strong automorphism $\Phi\:X\to X$ of an elliptic LF $X$ which sends some section, $S$, to itself,
acts as the identity on $H_*(X)$.
\end{Prop}
\begin{proof}
Since $\Phi$ preserves the fibre class $\T$ and a section $S$, it is enough to justify that $\Phi_*$ acts as the identity on $E_8=\la S,\T\ra^\perp\subset H_2(X)$.
The latter holds because $E_8$ is spanned by the matching cycles $\V_p\subset f^{-1}(p)\sm S$
for some matching paths $p\subset S^2$ (see the definitions in Section \ref{matching}).
 Such spanning is a well-known fact illustrated, in particular, in Figure \ref{fig:root-graphs}, which shows the matching paths representing the basic roots of $E(8)$.
It is also trivial to see that $f^{-1}(p)\sm S$ is homotopy equivalent to a wedge of a 2-sphere $\V_p$ and a circle, and so, $[\V_p]$ generates
$H_2(f^{-1}(p)\sm S)=\Z$.
Now it is left to notice that $\Phi_*([\V_p])=[\V_p]$, since $\Phi$ preserves the orientation of the fibres of $X$.
\end{proof}

\subsection{Intersection of sections with matching cycles}
Assume that $p$ is an arc that is matching relative to disjoint sections $S_1,\dots,S_d\subset X$ and $\V_p$ is the matching cycle.
After adding an additional section $S_{d+1}\subset X\sm(S_1\cup\dots\cup S_d)$,
we can construct Lefschetz thimbles $D_0$ and $D_1$ in the complement of $S_1\cup\dots\cup S_{d+1}$. (Here we have abbreviated $D_i=D_{p_i}$.)
Our assumption that $p$ is matching means that the
vanishing cycles $c_k=\del D_k\subset \T_{d+1}$, $k=0,1$, are isotopic in $\T_d$ (obtained from $\T_{d+1}$ by forgetting $q_{d+1}=\T\cap S_{d+1}$),
although $c_0$ and $c_1$ are possibly non-isotopic in $\T_{d+1}$.
The trace of an isotopy in $\T_d$ 
gives a $2$-chain $A$, which is a sub-surface in $\T\sm(S_1\cup\dots\cup S_d)$
(possibly with a few cross-point singularities) bounded by the $1$-cycle $\del D_1-\del D_0$.
 The union $\tV_p=D_0+A-D_1$ is a $2$-chain homologous to $\V_p$ in $X\sm(S_1\cup\dots\cup S_d)$.

\begin{figure}[h!]
\caption{Intersection index of a section $S_2$ with a matching cycle $\V_p$. Region $A\subset\T\sm\{q_1\}$ with $\del A=\del D_1-\del D_0$ is
shaded.}
\label{fig:index}
\includegraphics[height=3.8cm]{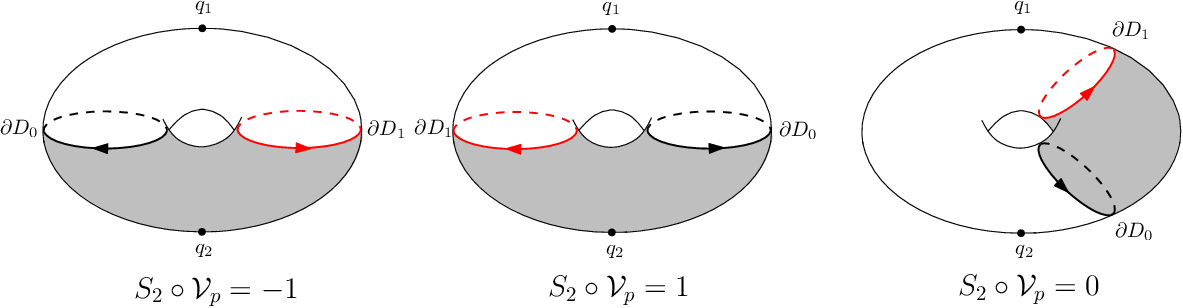}
\end{figure}

Recall that the index $\ind(q,\gamma)$ of a point $q$ with respect to a $1$-boundary
in a punctured surface is the same as the linking number of $q$ and $\gamma$ and
is defined as the intersection index $q\circ R$ of $q$ with a $2$-chain $R$, where $\del R=\gamma$.

Applying it to  $\T\sm\{q_1,\dots,q_d\}$, where
$q_i=\T\cap S_i$, $i=1,\dots,d$, and $\T$ is some nonsingular fibre,
we immediately conclude

\begin{Lem}\label{int-ind}
$S_{d+1}\circ\V_p=\ind(q_{d+1},\del D_1-\del D_0)$.
\qed\end{Lem}

An illustration in the case of $d=1$ is given in Figure~\ref{fig:index}.

\section{Monodromy factorizations of $E(1)$ with up to four sections}

\subsection{Sample factorizations and their variations}
This Section contains 
Tables~\ref{tab:d=1}--\ref{tab:d=3} of monodromy factorizations announced in Theorem \ref{Main}.
Each Table contains a list of {\it sample factorizations}.
Each factorization is a lifting of $(a^3b)^3=1\in \Mod(\T)$ to
$\PMod(\T_{d})$ and contains three {\it blocks} which are liftings of the corresponding product $a^3b$.
{\it Variations} of sample factorizations are other factorizations that can be obtained by either
\begin{itemize}\item
{\it shift-variations} cyclically permuting the three blocks, or
\item
{\it block-permutations} of letters covering the $a$'s within the same block.
\end{itemize}
The total number of variations of each sample factorization (including the sample itself) is indicated in the last column of each table, marked with ``$\#$''.
For example, the sample factorization in Row 2 of Table \ref{tab:d=1}, that is, $a_1^2a_2b_1a_1a_2^2b_1a_2^3b_2$,
admits two shift-variations: $a_1a_2^2b_1a_2^3b_2a_1^2a_2b_1$ and
$a_2^3b_2a_1^2a_2b_1a_1a_2^2b_1$. By block-permutations of the block $a_1^2a_2b_1$, we can obtain $a_1a_2a_1b_1$ and $a_2a_1^2b_1$,
while block $a_1a_2^2b_1$ gives variations  $a_2a_1a_2b_1$ and  $a_2^2a_1b_1$.
In total, all variations of the sample factorization in Row~2 provides $27$ factorizations.

\subsection{The case of a pair of disjoint sections in $X$} 

\begin{Prop}\label{factorization1+1}
Each entry in the second column of Table~\ref{tab:d=1} is a factorization of the identity in $\PMod(\T_2)$.
Furthermore, the forgetful homomorphism $r_2\colon\PMod(\T_2)\to\PMod(\T_1)$ carries each such factorization to the factorization $(a^3b)^3$.
\end{Prop}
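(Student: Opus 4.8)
The plan is to separate the two assertions, disposing of the forgetful-map statement first and then using it to frame the harder triviality statement.

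\emph{The forgetful statement.} By the conventions of Section~\ref{conventions}, each entry in the second column is a word in Dehn twists $t_{a_i},t_{b_j}$ about simple closed curves $a_i,b_j\subset\T\sm\{q_1,q_2\}$, where every $a_i$ is isotopic to $a$ and every $b_j$ is isotopic to $b$ once the puncture $q_2$ is filled in. The forgetful homomorphism $r_2$ sends a Dehn twist about an essential curve to the Dehn twist about its (still essential) image, so $r_2(t_{a_i})=t_a$ and $r_2(t_{b_j})=t_b$. Reading the subscript pattern of each entry --- three blocks, each consisting of three $a$-twists followed by one $b$-twist --- the image word is exactly $(a^3b)^3$. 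Since $(a^3b)^3=1$ in $\Mod(\T)=\SL_2(\Z)$ is the classical monodromy relation for $E(1)$, this settles the second assertion and, in particular, shows that each entry $W=t_{\til c_1}\cdots t_{\til c_{12}}$ lies in $\ker r_2$.

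\emph{The identity statement.} I would invoke the Birman exact sequence for forgetting $q_2$,
\[
1\longrightarrow\pi_1(\T\sm\{q_1\},q_2)\longrightarrow\PMod(\T_2)\overset{r_2}\longrightarrow\PMod(\T_1)\longrightarrow 1,
\]
whose kernel is the point-pushing subgroup, a free group of rank two. By the previous step each entry satisfies $r_2(W)=1$, so $W=\mathrm{Push}(\gamma)$ for a unique $\gamma\in\pi_1(\T\sm\{q_1\},q_2)$, and since $\mathrm{Push}$ is injective the whole problem reduces to proving $\gamma=1$. To identify $\gamma$ I would fix a reference arc $\alpha$ joining $q_2$ to the puncture $q_1$ and compare $\alpha$ with its image $W(\alpha)$: the class $\gamma$ is read off from the trace of $q_2$ under an isotopy trivializing $W$ modulo $q_2$, and $\gamma=1$ precisely when $W$ fixes $\alpha$ up to isotopy rel endpoints. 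Carrying out this comparison for the finitely many sample factorizations, using the geometric intersection pattern of the curves $a_i,b_j$ (curves meeting once contribute a braid relation $t_xt_yt_x=t_yt_xt_y$, disjoint curves commute), should reduce each sample word to the identity.

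To pass from the samples to their variations costs nothing extra: the shift-variations replace $W$ by a cyclic rearrangement of its three blocks, i.e.\ by a conjugate $g^{-1}Wg$, and conjugation preserves the property of being the identity; the block-permutations merely reorder the lifts of $a$ within a single block, and as these lifts can be realized by disjoint curves their twists commute, so such a reordering does not change the underlying element of $\PMod(\T_2)$ at all. The main obstacle is exactly the triviality of the point-pushing loop $\gamma$ for each sample. Because $\ker r_2$ is free, hence nonabelian, it is not enough to check that the homological displacement of $q_2$ --- the image of $\gamma$ in $H_1(\T\sm\{q_1\})\cong\Z^2$ --- vanishes; that condition is necessary but not sufficient. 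One must track the full homotopy class of the arc $W(\alpha)$ through all twelve twists and watch the contributions cancel as a word in the free group, and it is this nonabelian bookkeeping that carries the genuine content of the proposition.
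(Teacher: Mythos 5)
Your reduction of the problem via the Birman exact sequence is a legitimate framework, and your handling of the forgetful statement and of the shift-variations (conjugation by a block) is fine, but there are two genuine gaps in the main argument. First, the criterion you propose for certifying $\gamma=1$ --- that $W$ fix a single arc $\alpha$ from $q_2$ to $q_1$ up to isotopy rel endpoints --- is necessary but not sufficient, for exactly the structural reason you correctly raise against the homological test one level down. The stabilizer of $\alpha$ in $\ker r_2$ is nontrivial: if $\delta$ is the boundary of a regular neighbourhood of $\alpha$ (a curve enclosing both marked points), then $t_\delta$ fixes $\alpha$, lies in $\ker r_2$ (under $r_2$ it becomes a twist about a puncture-parallel curve), and is a nontrivial element of $\PMod(\T_2)$; it is, up to sign, $\mathrm{Push}$ of the loop encircling $q_1$. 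So even a complete nonabelian bookkeeping of $W(\alpha)$ would not finish the proof --- you would have to track the action on a generating set of $\pi_1(\T\sm\{q_1\},q_2)$, i.e.\ on two filling arcs or curves. Second, and more seriously, that computation is never performed: the assertion that tracking the twelve twists ``should reduce each sample word to the identity'' is precisely the content of the proposition for each of the twelve samples, and is left unverified.

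There is also an error in the passage from samples to variations: you claim the lifts of $a$ within a block are realizable by disjoint curves, so that any reordering leaves the group element unchanged. On a twice-marked torus there are only two disjoint, pairwise non-isotopic lifts of $a$ (the complement of $a_1\cup a_2$ is two once-marked annuli, whose essential curves are isotopic to $a_1$ or $a_2$), so $a_3$ must essentially intersect one of $a_1,a_2$ and the corresponding twists do not commute; an arbitrary permutation of the block $a_3a_2a_1$ would change the element, which is why Types~6 and 6* carry only $9$ variations rather than $18$. For contrast, the paper's proof avoids all arc-tracking: it takes the two standard relations $(a_1a_2^2b_1)^3=1$ and $(a_2a_1^2b_1)^3=1$ in $\PMod(\T_2)$ as seeds and derives every other sample by substituting identities such as $b_1=a_2a_1^{-1}b_2a_1a_2^{-1}$ and $a_i=b_2^{-1}b_1a_{i+1}b_1^{-1}b_2$, each expressing one generator as a conjugate of another and read off from the flat-torus picture, so triviality of the product is preserved at every step. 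If you want to salvage your route, you must either carry out the point-pushing computation against a full generating set of the free group, or replace it by the paper's substitution scheme.
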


\begin{table}[h!]
\caption{Sample factorizations of sections $S\subset X\sm S_1$.} \label{tab:tet}
\label{tab:d=1}
\hskip-2mm\scalebox{0.85}{\begin{tabular}{ccc}
\hline
Type  & Sample factorization & $\#$\\
\hline
1\phantom{*} & $a_1a_2^2b_1a_1a_2^2b_1a_1a_2^2b_1$ & $27$\\
2\phantom{*} & $a_1^2a_2b_1a_1a_2^2b_1a_2^3b_2$ & $27$\\
3\phantom{*} & $a_1a_2^2b_2a_1^2a_2b_1a_2^3b_2$ & $27$\\
4\phantom{*} & $a_1^3b_1a_2^3b_2a_2^3b_3$ & $3$\\
\hline
1* & $a_2a_1^2b_1a_2a_1^2b_1a_2a_1^2b_1$ & $27$\\
2* & $a_2^2a_1b_2a_2a_1^2b_2a_1^3b_1$ & $27$\\
3* & $a_2a_1^2b_1a_2^2a_1b_2a_1^3b_1$ & $27$\\
4* & $a_2^3b_3a_1^3b_2a_1^3b_1$ & $3$\\
\hline
5\phantom{*} & $a_2^3b_1a_3a_2^2b_2a_1a_2^2b_1$ & $27$\\
5* & $a_2^3b_2a_1a_2^2b_1a_3a_2^2b_2$ & $27$\\
\hline
6\phantom{*} & $a_2^3b_1a_2^3b_1a_3a_2a_1b_1$ & $9$\\
6* & $a_2^3b_2a_2^3b_2a_1a_2a_3b_2$ & $9$\\
\hline
\end{tabular}}
\includegraphics[width=9.5cm,valign=c]{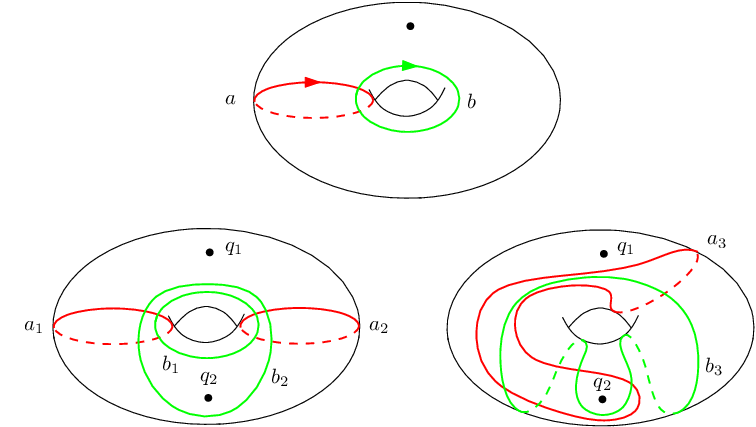}
\end{table}

\begin{proof} 
The Type 1 sample factorization corresponds to the well known identity $(a_1a_2^2b_1)^3=1\in\Mod(\T_2)$.
Using the fact that $a_1$ commutes with $a_2$ and that
\begin{equation} \label{eq:S_1^2b_1}
b_1=a_2a_1^{-1}b_{2}a_1a_2^{-1} \ 
\end{equation}
(which is obvious from Figure \ref{flat-torus}) we get from the Type 1  
the Type 2 sample factorization after substitution for the last $b_1$ and then performing a cyclic permutation.
Similarly, Type 3 is obtained from Type 2 by substitution for the first $b_1$. 
Using $b_2=a_2a_1^{-1}b_3a_1a_2^{-1}$, we obtain Type~4 from Type~3 by substituting for the first $b_2$ and then applying a shift-variation.
Type~1* corresponds to another standard relation, $(a_2a_1^2b_1)^3=1\in\PMod(\T_2)$, and Types 2*--\,4* are obtained from it in a similar way.

Using $a_i=b_2^{-1}b_1a_{i+1}b_1^{-1}b_2$ for $i=1,2$, 
we get Type 5 from Type 2 by
replacing the first three factors, using a shift-variation and a block-permutation.
Similarly, we get Type~5* from Type~3  by
replacing the factors between the first $b_2$ and the $b_1$.
Finally, we obtain Types~6 and 6* from Types~5 and 5*, respectively, by using \eqref{eq:S_1^2b_1}, and shift-variations with block-permutations as appropriate.

The second claim in the proposition is straightforward from the definition.
\end{proof}

\begin{Rem}
Theorem \ref{factorization-conjugacy} suggests a construction of different monodromy factorizations of isotopic sections $S\subset X\sm S_1$,
by applying simultaneous conjugation to all Dehn twists of a factorization by some element $g\in\ker r_2$.
Examples are replacing all $a_i$ by $a_{i+1}$ or all $b_i$ by $b_{i+1}$ in any factorization of Types 1--3, or 1*--\,3*.
In these examples the elements $g$ are represented by 
$b_1^{-1}b_2$ or $a_1a_2^{-1}$, respectively.
\end{Rem}

\subsection{Symmetries in the list of factorizations}\label{symmetries}
The monodromy factorizations in Table \ref{tab:d=1} admit a {\it reversion} 
operation induced 
by an involution on the basic torus fibre which interchanges $a_1$ with $a_2$, and $b_1$ with $b_2$.
This involution looks like 
 the central reflection on the ``flat torus'' diagram, Figure \ref{flat-torus},
where one marked point is at the centre and the other at the corners.
\begin{figure}[h!]
\caption{Basic torus fibre with marked points $q_1$  (corners) and $q_2$ (center)}
\label{flat-torus}
\includegraphics[height=3.2cm]{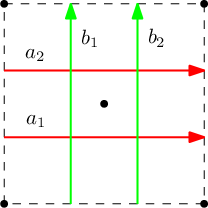}
\end{figure}
Reversion of  Types 1--6 factorizations
 are conjugate to Types 1*--\,6*.
 For instance for the Type 1 factorization $(a_1a_2^2b_1)^3$, reversion of the indices $1\leftrightarrow2$ gives
$(a_2a_1^2b_2)^3$. The latter is transformed into the Type 1* factorization $(a_2a_1^2b_1)^3$ after we
apply the conjugation replacing each factor $x$ by $a_2a_1^{-1}xa_1a_2^{-1}$.

Other symmetries are represented by reflections across horizontal or vertical lines passing through one of the marked points.
The induced modification of monodromy factorizations is 
reversion of the order of factors
combined with reversion of the indices only for $a$ or for $b$
(but not for both, as was the case before).
This symmetry can be interpreted as the effect of the complex conjugation for an appropriate J-holomorphic structure in $X$
(or an antisymplectic involution for an appropriate symplectic structure).

\subsection{The case of three disjoint sections}

\begin{Prop}\label{factorization1+2}
Each entry in the second column of Table~\ref{tab:d=2} is a factorization of the identity in $\PMod(\T_3)$.
Furthermore, the forgetful homomorphism $r_3\colon\PMod(\T_3)\to\PMod(\T_2)$ carries each such factorization to the factorization $(a_1a_2^2b_1)^3$.
\begin{table}[h!]
\caption{Sample factorizations of sections $S\subset X\sm (S_1\cup S_2)$.}
\label{tab:d=2}
\hskip-2mm\scalebox{0.85}{\begin{tabular}{ccc}
\hline
  & Sample factorization & $\#$\\
\hline
1 & {$a_1a_2a_3b_1a_1a_2a_3b_1a_1a_2a_3b_1$} & $8$\\
2 & $a_1a_2^2b_1a_1a_2a_3b_1a_1a_3^2b_2$ & $6$\\
3 & $a_1a_2a_3b_2a_1a_2^2b_1a_1a_3^2b_2$ & $6$\\
\hline
4 & $a_1a_2^2b_1a_1a_2a_3b_2\widecheck{a}_1a_2^2b_1$ & $6$\\
5 & $a_1a_2a_3b_2a_1a_2^2b_2\widecheck{a}_1a_2^2b_1$ & $6$\\
6 & $a_1a_2^2b_2a_1a_2^2b_2\widecheck{a}_1a_2a_3b_2$ & $6$\\
\hline
7 & $\widecheck{a}_1\widecheck{a}_2a_2b_1a_1a_2^2b_2\widecheck{a}_1a_2^2b_2$ & $6$\\
8 & $\widecheck{a}_1a_2^2b_1a_1a_2^2b_2\widecheck{a}_1\widecheck{a}_2a_2b_1$ & $6$\\
9 & $\widecheck{a}_1a_2^2b_1a_1a_2\widecheck{a}_2b_1\widecheck{a}_1a_2^2b_1$ & $6$\\
\hline
\end{tabular}}
\includegraphics[width=10cm,valign=c]{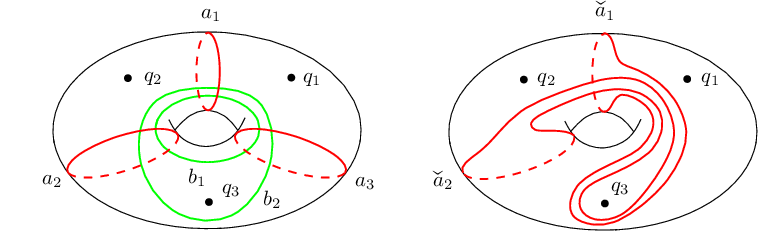}
\end{table}
\end{Prop}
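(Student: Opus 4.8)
The plan is to follow the strategy of the proof of Proposition~\ref{factorization1+1}, one level higher in the tower of forgetful maps. I would take as the base relation the Type~1 word $(a_1a_2a_3b_1)^3=1\in\PMod(\T_3)$, which is the fully split lift of the Type~1 identity $(a_1a_2^2b_1)^3=1$ of the previous case (forgetting $q_3$ merges $a_3$ into $a_2$). One establishes this directly, either by recognizing it as the standard relation attached to the three-marked torus or, equivalently, as a monodromy factorization of a rational elliptic LF carrying three disjoint algebraic sections; it plays here the role that the well-known identity $(a_1a_2^2b_1)^3=1$ played in $\PMod(\T_2)$.

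Next I would read off from the three-marked-point torus diagram accompanying Table~\ref{tab:d=2} the geometric relations among the curves $a_1,a_2,a_3,b_1,b_2$ and the check-decorated curves $\widecheck{a}_1,\widecheck{a}_2$. These are the $\T_3$-analogues of equation~\eqref{eq:S_1^2b_1}: conjugacy relations expressing one $b$-twist in terms of another by a product of $a$-twists, the commutation of disjoint $a_i$'s, and conjugation relations of the form $a_i=w\,a_{i+1}\,w^{-1}$ with $w$ a word in the $b$-twists. Using the same bookkeeping as in Proposition~\ref{factorization1+1}, I would obtain Types~2 and~3 from Type~1 by substituting for a single $b$-factor and performing a cyclic permutation of blocks, and then feed the resulting intermediate relations forward to produce the remaining entries, in each case closing with the prescribed shift-variations and block-permutations.

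The \emph{genuinely new} work is the appearance of the curves $\widecheck{a}_1$ and $\widecheck{a}_2$ in Types~4--9. A check-decorated curve is the isotopy class obtained when a substitution pushes an $a$-arc to the far side of a marked point: it is a distinct class in $\T_3$ that nonetheless projects to the same curve in $\T_2$. I would make precise, from the diagram, the relation identifying each $\widecheck{a}_i$ as a $b$-conjugate of $a_i$, verify these are genuine relations in $\PMod(\T_3)$, and then check that the chain of substitutions terminates at exactly the tabulated words, with Types~4--6 requiring the single substitution introducing $\widecheck{a}_1$ and Types~7--9 a second one introducing $\widecheck{a}_2$. This last point is the main obstacle: the relations only determine each factorization up to Hurwitz equivalence, so care is needed to land on precisely the listed normal forms rather than merely equivalent ones, and to confirm that the two checks in Types~7--9 enter in the correct order.

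Finally, the second claim is immediate. Under $r_3$ the curves reduce by $a_3\mapsto a_2$, $\widecheck{a}_i\mapsto a_i$, and $b_2\mapsto b_1$ (the index conventions visible in the diagram), and a direct substitution then shows that every entry of Table~\ref{tab:d=2} maps to the word $(a_1a_2^2b_1)^3$, as claimed.
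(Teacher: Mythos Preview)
Your plan is correct and matches the paper's proof almost exactly: the paper starts from the star relation $(a_1a_2a_3b_1)^3=1$ (citing Gervais), uses the $\T_3$-analogue $b_1=a_3a_2^{-1}b_2a_2a_3^{-1}$ of \eqref{eq:S_1^2b_1} to pass to Types~2 and~3, then invokes the conjugation relations $a_i=b_1^{-1}b_2\widecheck a_ib_2^{-1}b_1$ (and $a_3=b_1^{-1}b_2a_2b_2^{-1}b_1$) to introduce the check-curves and reach Types~4--9 by the sequential substitutions you describe. One small refinement: the passage from Types~4--6 to Types~7--9 is not just ``introduce $\widecheck a_2$'' but rather a three-term substitution (applied to Type~5) that simultaneously produces a second $\widecheck a_1$ and the $\widecheck a_2$; after that, Types~8 and~9 follow from Type~7 by further $b$-substitutions, just as Types~5 and~6 followed from Type~4.
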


\begin{proof}
The Type 1 sample factorization corresponds to the so-called {\it star relation},    
$(a_1a_2a_3b_1)^3=1\in\PMod(\T_3)$; see Gervais \cite{g}. Using 
\begin{equation} \label{eq:S_1^3b_1}
b_1=a_3a_2^{-1}b_2a_2a_3^{-1}, 
\end{equation}
we obtain Type 2 factorization from Type 1 by replacing the last $b_1$ and using a cyclic permutation. Similarly, Type 3 is obtained from Type 2 by replacing the first $b_1$.

Using 
\begin{equation} \label{eq:S_1^3a_i}
a_i=b_1^{-1}b_2\widecheck{a}_ib_2^{-1}b_1, \quad \text{for $i=1,2$, and} \quad a_3=b_1^{-1}b_2a_2b_2^{-1}b_1,
\end{equation}
we obtain Type 4 from Type $2$ by substituting for the terms between the second $b_1$ and the $b_2$.  We obtain Type 5  from Type 4  by substituting for the first $b_1$, using 
\eqref{eq:S_1^3b_1}, and Type 6 from Type 5 by substituting for the final $b_1$, and using a cyclic permutation.

We obtain Type 7 from Type 5 by substituting for the first three terms using \eqref{eq:S_1^3a_i} and
using a cyclic permutation. Finally, Types 8 and 9 are obtained from Type 7  by successively substituting
for the $b_2$'s using
$b_2=a_2^{-1}\widecheck{a}_2b_1\widecheck{a}_2^{-1}a_2.$

The second statement is straightforward.
\end{proof}

\subsection{The case of four disjoint sections}

\begin{Prop}\label{factorization1+3}
Each entry in the second column of Table~\ref{tab:d=3} is a factorization of the identity in $\PMod(\T_4)$.
Furthermore, the forgetful homomorphism $r_4\colon\PMod(\T_4)\to\PMod(\T_3)$ carries each such factorization to the factorization  $(a_1a_2a_3b_1)^3$.
\end{Prop}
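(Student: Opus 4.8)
The plan is to mirror the structure of the proofs of Proposition~\ref{factorization1+1} and Proposition~\ref{factorization1+2}, establishing the two claims separately. The first claim, that each entry in Table~\ref{tab:d=3} is a factorization of the identity in $\PMod(\T_4)$, follows from the second claim together with a single known relation: by Proposition~\ref{factorization1+2} (or directly from the star relation analysis) we know that $(a_1a_2a_3b_1)^3=1\in\PMod(\T_3)$ lifts, and the one relation I genuinely need to start from is the analogue of the star relation one level up, namely that the Type~1 sample factorization for $d=3$ is itself an identity in $\PMod(\T_4)$. This should be the appropriate four-marked-point star-type relation (a lift of $(a_1a_2a_3b_1)^3$), which I would cite from Gervais~\cite{g} or verify directly. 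Once the Type~1 entry is known to equal the identity, every other sample factorization is obtained from it by substitutions using relations of the form $b_i = a\,a'^{-1}\,b_j\,a'\,a^{-1}$ and $a_i = b^{-1}b'\,\widecheck a_i\,b'^{-1}b$ (the four-marked-point analogues of \eqref{eq:S_1^3b_1} and \eqref{eq:S_1^3a_i}), each of which is a conjugation identity that preserves the property of being a factorization of the identity. Thus the first claim reduces to checking that each such substitution is valid in $\PMod(\T_4)$.

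First I would write down explicitly the substitution relations I need, reading them off the flat-torus diagram in the figure accompanying Table~\ref{tab:d=3} exactly as \eqref{eq:S_1^2b_1}, \eqref{eq:S_1^3b_1}, and \eqref{eq:S_1^3a_i} were read off for the lower cases. These express a Dehn-twist curve carrying one marked point in terms of a curve carrying a different marked point, conjugated by twists that transport the fourth marked point $q_4$ appropriately. With these in hand, I would then verify the table row by row, at each step indicating which sample factorization it is derived from, which factor is being substituted, and which shift-variation or block-permutation is applied to bring it into the displayed normal form, precisely as in the two preceding propositions.

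The second claim, that $r_4$ carries each factorization to $(a_1a_2a_3b_1)^3$, is the ``straightforward from the definition'' part: the forgetful homomorphism sends each $\til c_i$ to its image curve $c_i$ in $\T_3$, so each lifted block $a_1a_2a_3b_i$ or its checked variant maps to $a_1a_2a_3b_1$ (since forgetting $q_4$ identifies all the $b_i$ with a single curve $b$, and identifies each $\widecheck a_i$ with $a_i$), and the whole factorization maps to the three-block product $(a_1a_2a_3b_1)^3$. I would state this as a one-line consequence of the forgetful homomorphism's action on the generators.

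I expect the main obstacle to be the substitution bookkeeping rather than any conceptual difficulty: because there are now four marked points, the conjugating relations are longer and the number of block-permutation options grows, so the hard part will be confirming that each displayed sample factorization in Table~\ref{tab:d=3} is reachable from Type~1 by a correct sequence of substitutions and rearrangements, and in particular getting the placement of the checked generators $\widecheck a_i$ (which record that the corresponding curve separates $q_4$ from the other marked points in a nontrivial way) exactly right in each row. This is entirely analogous to the $d=2$ case but with more cases to track, so I would organize the verification by grouping the rows into the same blocks (Types 1--3, 4--6, 7--9, and so on) used in the table's horizontal rules and treating each block uniformly.
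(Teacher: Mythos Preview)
Your approach is essentially the same as the paper's: start from the Type~1 identity, then derive Types~2--9 by successive conjugation substitutions of the form $b_i=\cdots b_j\cdots$ and $a_i=\cdots \widecheck a_i\cdots$, with the second claim immediate from the forgetful map. Two small corrections: the Type~1 relation in $\PMod(\T_4)$ is the one given by Korkmaz--Ozbagci~\cite{ko}, not Gervais~\cite{g}; and the table uses both hatted generators $\widehat a_i$ and checked generators $\widecheck a_i$ (Types~1--3 involve only $\widehat a_i$, Types~7--9 only $\widecheck a_i$, Types~4--6 both), so your substitution relations need to cover both decorations, e.g.\ $\widehat a_i=b_2b_1^{-1}a_ib_1b_2^{-1}$ alongside the checked versions.
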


\begin{table}[h!]
\caption{Sample factorizations of sections $S\subset X\sm(S_1\cup S_2\cup S_3)$.} 
\label{tab:d=3}
\hskip-2mm\scalebox{0.84}{
\begin{tabular}{ccc}
\hline
  & Sample factorization & $\#$\\
\hline
1 & $a_1a_2\widehat a_3 b_1a_1a_2\widehat a_3 b_1a_1\widehat a_2a_3b_1$ & $3$\\
2 & $a_1a_2\widehat a_3 b_1a_1\widehat a_2\widehat a_3 b_2a_1a_2a_3b_1$ & $3$\\
3 & $a_1\widehat a_2\widehat a_3 b_2a_1a_2\widehat a_3 b_2a_1a_2a_3b_1$ & $3$\\
\hline
4 & $a_1a_2\widehat a_3 b_2\widecheck a_1a_2a_3b_1a_1a_2a_3b_1$ & $3$\\
5 & $a_1a_2a_3b_2\widecheck a_1a_2a_3b_1a_1a_2\widehat a_3 b_2$ & $3$\\
6 & $a_1a_2a_3b_2\widecheck a_1a_2\widehat a_3 b_2a_1a_2a_3b_2$ & $3$\\
\hline
7 & $a_1a_2a_3b_2\widecheck a_1a_2a_3b_2\widecheck a_1\widecheck a_2a_3b_1$ & $3$\\
8 & $a_1a_2a_3b_2\widecheck a_1\widecheck a_2a_3b_1\widecheck a_1a_2a_3b_1$ & $3$\\
9 & $a_1\widecheck a_2a_3b_1\widecheck a_1a_2a_3b_1\widecheck a_1a_2a_3b_1$ & $3$\\
\hline
\end{tabular}}
\includegraphics[width=10cm,valign=c]{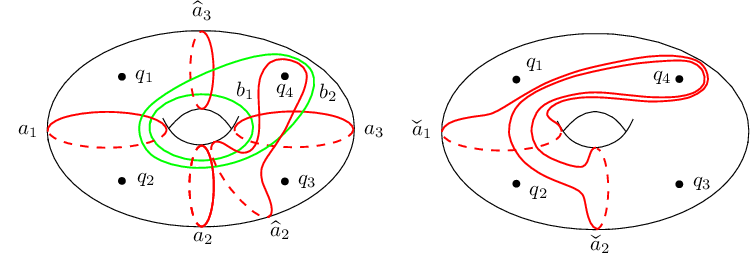}
\end{table}
\renewcommand{\baselinestretch}{1}

\begin{proof}
The Type 1 sample factorization $a_1a_2\widehat a_3 b_1a_1a_2\widehat a_3 b_1a_1\widehat a_2a_3b_1$
in $\PMod(\T_4)$ is given by Korkmaz and Ozbagci \cite{ko}. Using $b_1=\widehat a_2a_2^{-1}b_2a_2\widehat a_2^{-1}$, we 
obtain Type 2 from Type 1 by substituting for the second $b_1$. Similarly, we obtain
Type 3 from Type 2 by substituting for the first $b_1$.

Using
\begin{equation} \label{eq:S_1^4a_1}
a_1=b_2b_1^{-1}\widecheck a_1b_1b_2^{-1},\quad \widehat a_i=b_2b_1^{-1} a_ib_1b_2^{-1}, \quad 
\text{for $i=2,3$},
\end{equation}
we obtain Type 4 from Type 2  by substituting for the terms between the first $b_1$ and
the $b_2$. Using $b_1=\widehat a_3 a_3^{-1}b_2a_3\widehat a_3 ^{-1}$, we obtain Types 5 and 6 from Type 4 
by sequentially substituting for the second $b_1$ and then the first $b_1$ and using a cyclic permutation.

Using \eqref{eq:S_1^4a_1} together with $a_2=b_2b_1^{-1}\widecheck a_2b_1b_2^{-1}$, we obtain Type 7 from 
Type 5 monodromy by substituting for the terms between $b_1$ and the second $b_2$. Finally, we obtain the factorizations of Types
8 and 9 from Type 7 by successively using $b_2=a_2^{-1}\widecheck a_2b_1\widecheck a_2^{-1}a_2$.

The second statement of the proposition is again straightforward.
\end{proof}

\section{Homology classes of sections defined by monodromy factorization}

\subsection{$\D$-sequences and chains of matching cycles in $X\sm S_1$}\label{2-cycles}
Each of the $240$ factorization sequences represented in Table \ref{tab:tet} (sample factorizations together with their variations)
splits into a
$b$-{\it sequence} $b_{j_1}b_{j_2}b_{j_3}$ (the elements in the $4$th, $8$th and $12$th positions),  and an
$a$-{\it sequence} $a_{i_1}\dots a_{i_9}$ (the elements in the remaining positions enumerated consecutively).
The differences of indices $i_k-i_{k+1}$, $k=1,\dots,8$, and $i_9-i_1$
form the {\it $\Delta a$-sequence},
while the differences $j_1-j_{2}$, $j_2-j_3$ and $j_3-j_1$ form the {\it $\D b$-sequence}; see Table 
\ref{fig:E8-graph} for examples with the sample factorizations.
In the accompanying figure, we connected 
the points of $\D\subset S^2$ that represent consecutive elements of $a$- and 
$b$-sequences by edges
($p_1,\dots,p_9$ and $q_1,q_2,q_3$, respectively).
Here we are assuming that the monodromy of $(X;S_1)$ is written relative to the system of arcs shown
in Figure \ref{fig:sing-scheme}.
Consequently, edge $p_i$ represents the $i$th term of the $\D a$-sequence, and edge $q_j$ the $j$th term of $\D b$-sequence.

\begin{table}[h!]
\caption{$\D$-sequences of sample factorizations of sections $S\subset X\sm S_1$. Accompanying figure: Matching cycles in $X\sm S_1$.}
\label{fig:E8-graph}
\label{tab:roots}
\raisebox{-28mm}{\includegraphics[height=5.7cm]{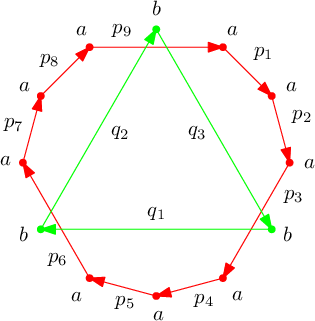}}\hskip5mm
\scalebox{0.9}{\begin{tabular}{ccccccc}
\hline
Type  & Factorization &$\D a$&$\D b$\\
\hline
1\phantom{*} & $a_1a_2^2b_1a_1a_2^2b_1a_1a_2^2b_1$ &-101-101-101&000\\
2\phantom{*} & $a_1^2a_2b_1a_1a_2^2b_1a_2^3b_2$ &0-11-100001&0-11 \\
3\phantom{*} & $a_1a_2^2b_2a_1^2a_2b_1a_2^3b_2$ &-1010-10001&1-10\\
4\phantom{*} & $a_1^3b_1a_2^3b_2a_2^3b_3$ &00-1000001&-1-12 \\
\hline
1* & $a_2a_1^2b_1a_2a_1^2b_1a_2a_1^2b_1$  & 10-110-110-1&000\\
2* & $a_2^2a_1b_2a_2a_1^2b_2a_1^3b_1$  & 01-110000-1&01-1\\
3* & $a_2a_1^2b_1a_2^2a_1b_2a_1^3b_1$  & 10-101000-1&-110\\
4* & $a_2^3b_3a_1^3b_2a_1^3b_1$ & 00100000-1&11-2\\
\hline
5\phantom{*} & $a_2^3b_1a_3a_2^2b_2a_1a_2^2b_1$&0000-12-100&-110\\
5* & $a_2^3b_2a_1a_2^2b_1a_3a_2^2b_2$  & 00001-2100&1-10\\
\hline
6\phantom{*} & $a_2^3b_1a_2^3b_1a_3a_2a_1b_1$&00000-111-1&000\\
6* & $a_2^3b_2a_2^3b_2a_1a_2a_3b_2$ & 000001-1-11&000\\
\hline
\end{tabular}}
\end{table}

\begin{Lem}\label{V-intersections}
Let $S\subset X\sm S_1$ be a section represented by one of the $240$ monodromy factorizations from Table \ref{tab:tet}
for the system of arcs shown in Figure \ref{fig:sing-scheme}. 
Then
the sequences of intersection indices $S\circ\V_{p_i}$, $i=1,\dots,9$, and $S\circ\V_{q_j}$, $j=1,2,3$,
coincide respectively with the $\D a$- and $\D b$-sequences of the factorization.
\end{Lem}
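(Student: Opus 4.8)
The plan is to reduce the lemma to Lemma~\ref{int-ind}, which already asserts that $S\circ\V_p=\ind(q_{d+1},\del D_1-\del D_0)$ for any matching path $p$, and then to compute these winding numbers directly from the combinatorics of the factorization. Since here $d=1$ and the extra section is $S=S_2$, the marked point whose index matters is $q_2=\T\cap S$, while $q_1=\T\cap S_1$ is the puncture we have fixed. For each edge $p_i$ (resp.\ $q_j$) in the accompanying figure, the two endpoints are consecutive critical values whose vanishing cycles $c_0,c_1\subset\T_2$ are isotopic in $\T_1$ (after forgetting $q_2$) but generally not in $\T_2$; this isotopy is exactly what makes $p_i$ a matching path, so each $\V_{p_i}$ and $\V_{q_j}$ is well defined.

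First I would make explicit, for a given factorization in Table~\ref{tab:tet}, which vanishing cycles sit at the endpoints of each edge. The subscript on each $a$ or $b$ in the factorization records, via the accompanying torus diagram, the isotopy class in $\T_2$ of the corresponding curve: $a_i$ and $b_i$ are the curves of the standard homology basis pushed to one side or the other of the marked point $q_2$ according to the index $i\in\{1,2,3\}$. Thus an edge $p_i$ of the $a$-sequence connects two critical values carrying curves $a_{i_k}$ and $a_{i_{k+1}}$ that are isotopic in $\T_1$ (they have the same underlying homology class, the ``$a$-curve'') but differ by how many times they wind around $q_2$; the integer recording this difference is precisely $i_k-i_{k+1}$, the $k$th entry of the $\Delta a$-sequence. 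The same applies verbatim to the $b$-edges with the $\Delta b$-sequence.

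Next I would carry out the index computation. Orienting each matching sphere as prescribed in Section on matching cycles, the boundary cycle $\del D_1-\del D_0$ is the difference of the two lifts of a single $\T_1$-curve to $\T_2$, and its winding number around $q_2$ is exactly the net number of times the isotopy in $\T_1$ (the trace $A$) sweeps across $q_2$. Reading this off the flat-torus picture in Figure~\ref{flat-torus}, a curve indexed $i$ versus one indexed $i'$ differs by $(i-i')$ elementary crossings of $q_2$, so $\ind(q_2,\del D_1-\del D_0)=i_k-i_{k+1}$ for the edge $p_i$, and analogously $j_1-j_2$ etc.\ for the $q_j$ edges. Combining with Lemma~\ref{int-ind} gives $S\circ\V_{p_i}=i_k-i_{k+1}$ and $S\circ\V_{q_j}=j_1-j_2$ (with the cyclic last entries $i_9-i_1$ and $j_3-j_1$), which are by definition the $\Delta a$- and $\Delta b$-sequences.

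The main obstacle will be \emph{sign and orientation bookkeeping}: the intersection index depends on the chosen orientation of $\V_p$, which in turn depends on an orientation of $\del D_0=\del D_1$ and on the direction of the path $p$ (from $\delta_0$ to $\delta_1$), as emphasized in the matching-cycle subsection. I must fix these conventions once, consistently with the left-to-right reading of the factorization and the counter-clockwise ordering of arcs in Figure~\ref{fig:sing-scheme}, and verify on one or two sample rows of Table~\ref{tab:roots} (say Type~1, whose $\Delta a$-sequence is $-101{-}101{-}101$ and $\Delta b$ is $000$) that the signs come out as tabulated rather than negated. Once the convention is pinned down on a single edge, the remaining entries follow by the same local computation, since each only sees the two indices at its endpoints.
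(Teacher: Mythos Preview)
Your approach is essentially the same as the paper's: reduce to Lemma~\ref{int-ind} and then compute $\ind(q_2,c-c')$ for pairs $c,c'\in\{a_i\}$ or $\{b_j\}$ directly from the torus picture. The paper's proof is a two-line version of exactly this.

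One point worth flagging: you treat the $a$- and $b$-cases as literally analogous, but in fact the paper records $\ind(a_i-a_j,q)=j-i$ while $\ind(b_i-b_j,q)=i-j$, i.e.\ the sign relation between the index of the curve and the winding number around $q_2$ is \emph{opposite} for the two families (as one sees from the flat-torus figure, where increasing the $a$-index and increasing the $b$-index push the curve across $q_2$ in opposite senses). Your proposed sanity check on a sample row would catch this, but it is the one substantive computation in the proof rather than mere bookkeeping, so you should state it explicitly rather than sweep it into ``analogously''.
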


\begin{proof}
By Lemma \ref{int-ind},
it is enough to check that $\ind(q,a_i-a_j)=j-i$, where $q=S\cap \T$,
while $\ind(q,b_i-b_j)=i-j$,
which is straightforward from the definition of the curves $a_1, a_2$ (see the figure accompanying Table~\ref{tab:tet}).
\end{proof}

\begin{Prop}\label{homol-distinct-d=1}
The $240$ 
monodromy factorizations listed in Table \ref{tab:d=1} represent $240$
sections $S\subset X\sm S_1$ whose homology classes $[S]\in H_2(X)$ are pairwise distinct.
\end{Prop}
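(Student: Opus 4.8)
The plan is to reduce the claim that the $240$ factorizations represent pairwise distinct homology classes to the already-established correspondence (Corollary \ref{sec-roots}) between such classes and the $240$ roots of $E_8$. By Lemma \ref{V-intersections}, each factorization determines a section $S\subset X\sm S_1$ whose intersection indices with the matching cycles $\V_{p_i}$ and $\V_{q_j}$ are precisely the $\Delta a$- and $\Delta b$-sequences recorded in Table \ref{fig:E8-graph}. Since the homology class $[S]\in H_2(X)$ is determined by its intersection numbers against a spanning set of $H_2(X)$, the first step is to verify that the matching classes $[\V_{p_1}],\dots,[\V_{p_9}],[\V_{q_1}],[\V_{q_2}],[\V_{q_3}]$ (together with $[\T]$ and $[S_1]$, against which every such section has a fixed, common pairing) generate a sublattice on which pairings suffice to recover $[S]$. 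Thus two of the factorizations can represent the \emph{same} class only if their $\Delta a$- and $\Delta b$-sequences agree.

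First I would tabulate the $12$-term integer vector $(\Delta a\,|\,\Delta b)$ attached to each of the $240$ factorizations: the $14$ sample rows are listed explicitly in Table \ref{fig:E8-graph}, and the variations (shift-variations permuting the three blocks, and block-permutations of the $a$-letters within a block) act on these sequences in an explicit, combinatorially transparent way. Shift-variations cyclically rotate the block structure, while block-permutations rearrange the entries of the $\Delta a$-sequence within each block; in every case the resulting vector is again read directly off the factorization. The second step is then purely combinatorial: check that as the factorization ranges over all $240$ entries (samples plus variations), the resulting vectors $(\Delta a\,|\,\Delta b)$ are pairwise distinct. Because these vectors encode the intersection data that pins down $[S]$, distinctness of the vectors forces distinctness of the homology classes.

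The cleanest way to organize the distinctness check, and the conceptual heart of the argument, is to identify the vector $(\Delta a\,|\,\Delta b)$ with a root of $E_8$ under the lattice embedding furnished by the matching cycles. Under the decomposition $H_2(X)=\la S_1\ra\oplus\la\T\ra\oplus E_8$ of Corollary \ref{sec-classes}, a section disjoint from $S_1$ has class $[S]=[S_1]+[\T]+e$ with $e\in E_8$ a root (Corollary \ref{sec-roots}), and the matching classes $[\V_{p_i}],[\V_{q_j}]$ span $E_8$ as the $(-2)$-vectors of a root system whose Cartan/Dynkin data matches the $E_8$-graph drawn in Table \ref{fig:E8-graph}. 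The pairings $S\circ\V_{p_i}=e\cdot[\V_{p_i}]$ and $S\circ\V_{q_j}=e\cdot[\V_{q_j}]$ are then exactly the coordinates of the root $e$ relative to a basis of $E_8$, so the map sending a factorization to its $(\Delta a\,|\,\Delta b)$-vector is, after this identification, the map sending $S$ to the root $e$. Injectivity of $e\mapsto e$ is trivial, so it remains only to confirm that the $240$ vectors produced are $240$ distinct roots.

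The main obstacle, and the place where genuine work is required, is verifying that the matching classes really do form a root basis of $E_8$ realizing the graph in Table \ref{fig:E8-graph}, i.e.\ computing the intersection matrix $\V_\bullet\cdot\V_\bullet$ of the chosen matching cycles and checking it is the $E_8$ Cartan matrix (up to the standard sign conventions, recalling $\V_p^2=-2$). Once this is in place, the remaining combinatorics—enumerating the $240$ vectors and confirming they exhaust, hence are in bijection with, the $240$ roots—is a finite verification that also immediately yields pairwise distinctness by a counting argument: a surjection (or even an injection) from a $240$-element set onto the $240$-element root system is automatically a bijection. I would therefore prove the lattice-theoretic claim about the matching cycles first, deduce that the class map factors as ``factorization $\mapsto$ intersection vector $\mapsto$ root,'' and close by the pigeonhole count identifying the image with all of the $240$ roots.
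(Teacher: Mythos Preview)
Your approach is correct in outline but takes a considerably longer route than the paper. The paper's proof uses only the $\Delta a$-sequence (ignoring $\Delta b$ entirely) and rests on a single combinatorial observation: two $a$-sequences yield the same $\Delta a$-sequence iff their index sequences differ by an additive constant, and since every one of the $240$ $a$-sequences contains the letter $a_1$ (the minimal index), that constant must be zero. Hence distinctness of the $240$ $a$-sequences (immediate from inspection of Table~\ref{tab:d=1} and its variation rules) forces distinctness of the $\Delta a$-sequences, and Lemma~\ref{V-intersections} converts this directly into distinctness of homology classes. No lattice identification is needed. By contrast, you propose to first establish that the matching cycles realize an $E_8$ root basis and then run a $240$-to-$240$ pigeonhole count; the $E_8$ identification is indeed carried out in the paper, but only later (Section~\ref{root-section}) and for a different purpose. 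Note also that your pigeonhole step is not free: knowing that both the source and target have $240$ elements does not by itself give injectivity, so you would still have to perform the explicit enumeration you describe to verify surjectivity onto the root system---work the paper's two-line argument avoids entirely.
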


\begin{proof}
Note that two $a$-sequences produce the same $\D a$-sequence only if their index-sequences differ by a constant. 
Therefore, since all $240$ $a$-sequences are distinct and all contain the letter $a_1$ at least once, the $240$ $\D a$-sequences must also all be distinct.
Lemma \ref{V-intersections} then implies that all $240$ sections $S$ represented by factorizations in Table  \ref{tab:d=1}
 are homologically distinguished by their intersection indices $S\circ\V_p$ with the corresponding matching cycles $\V_p$.
\end{proof}

\subsection{Intersection of sections with matching cycles in $X\sm(S_1\cup S_2)$}
The factorizations in Table \ref{tab:d=2} are liftings of $(a_1a_2^2b_1)^3$ from $\Mod(\T_2)$ to $\Mod(\T_3)$. Accordingly, we split the sequence of letters in each factorization into
three subsequences:
\begin{itemize}
\item
a {\it $b$-sequence}  formed by the $4$th, $8$th and $12$th factors (sent to $b_1$ by the forgetful map),
\item
an {\it $a_1$-sequence}  formed by the $1$st, $5$th and $9$th factors (sent to $a_1$), and
\item
an {\it $a_2$-sequence}  formed by the remaining six factors (sent to $a_2$).
\end{itemize}

A comparison of consecutive terms in each subsequence give difference sequences, which we denote by $\D b$, $\D a_1$ and $\D a_2$, respectively.
Namely, 
\begin{itemize}\item
the $b$-sequence $b_{i_1}b_{i_2}b_{i_3}$ gives the $\D b$ sequence $i_1-i_2$, $i_2-i_3$, $i_3-i_1$,
\item
in the $a_1$-sequence, consecutive pairs of letters $a_1\widecheck a_1$ and  $\widecheck a_1 a_1$
give $\D a_1$-index $1$ and $-1$ respectively, while $a_1a_1$ and $\widecheck a_1\widecheck a_1$ give $0$,
\item
in the $a_2$-sequence, consecutive pairs $a_3a_2$ and $a_2\widecheck a_2$ give $\D a_1$-index $1$,
pairs $a_2a_3$ and  $\widecheck a_2 a_2$ give $-1$, and pairs of equal letters give $0$.
\end{itemize}

Table \ref{tab:d2-chain} shows the $\D$-sequences for the sample factorizations of Types 1--9.
As before, pairs of consecutive terms in each subsequence represent vertices connected by edges 
in the figure accompanying Table \ref{tab:d2-chain}.

  \begin{table}[h!]
\caption{$\D$-sequences of sample factorizations of sections $S\subset X\sm(S_1\cup S_2)$. Accompanying figure: Matching cycles in $X\sm(S_1\cup S_2)$.}
\label{fig:E7-graph}
\label{tab:d2-chain}
\raisebox{-22mm}{\includegraphics[height=4.7cm]{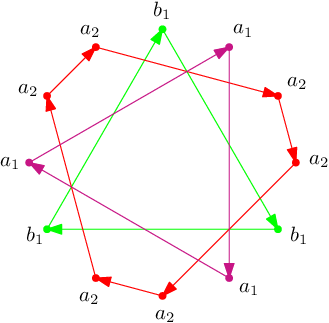}}
\scalebox{0.95}{\begin{tabular}{cccccc}
\hline
Type  & Sample factorization &  $\D a_1$&$\D b$& $\D a_2$\\
\hline
1 & $a_1a_2a_3b_1a_1a_2a_3b_1a_1a_2a_3b_1$ &000&000&-11-11-11 \\
2 & $a_1a_2^2b_1a_1a_2a_3b_1a_1a_3^2b_2$ & 000&0-11&00-1001\\
3 & $a_1a_2a_3b_2a_1a_2^2b_1a_1a_3^2b_2$ &000&1-10&-110-101 \\
\hline
4 & $a_1a_2^2b_1a_1a_2a_3b_2\widecheck a_1a_2^2b_1$ &01-1&-110&00-1100 \\
5 & $a_1a_2a_3b_2a_1a_2^2b_2\widecheck a_1a_2^2b_1$ &01-1&01-1&-110000\\
6 & $a_1a_2^2b_2a_1a_2^2b_2\widecheck a_1a_2a_3b_2$ &01-1&000&0000-11\\
\hline
7 & $\widecheck a_1\widecheck a_2a_2b_1a_1a_2^2b_2\widecheck a_1a_2^2b_2$ &-110&-101&-100001\\
8 & $\widecheck a_1a_2^2b_1a_1a_2^2b_2\widecheck a_1\widecheck a_2a_2b_1$ &-110&-110&0001-10\\
9 & $\widecheck a_1a_2^2b_1a_1a_2\widecheck a_2b_1\widecheck a_1a_2^2b_1$ &-110&000&001-100\\
\hline
\end{tabular}}
\end{table}

\begin{Lem}\label{VpS-d=2}
Let $S\subset X\sm(S_1\cup S_2)$ be a section represented by one of the $56$ monodromy factorizations from Table \ref{tab:d2-chain}
for the system of arcs on Figure \ref{fig:sing-scheme}. 
Then for each edge $p$ in the figure accompanying Table \ref{tab:d2-chain},
the intersection index 
$S\circ\V_{p}$ coincides with the index in the associated $\D$-sequence corresponding to the endpoints of $p$.
\end{Lem}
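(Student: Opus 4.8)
The plan is to reduce the statement for $X \setminus (S_1 \cup S_2)$ to the one-section case already handled in Lemma \ref{V-intersections}, invoking the general intersection formula of Lemma \ref{int-ind} as the common tool. By Lemma \ref{int-ind}, for each matching path $p$ relative to the three sections $S_1, S_2, S$ we have $S \circ \V_p = \ind(q, \del D_1 - \del D_0)$, where $q = S \cap \T$ and $D_0, D_1$ are the two Lefschetz thimbles meeting along the endpoints of $p$. Thus the entire claim is a bookkeeping statement: the linking number of the puncture $q$ with the boundary difference $\del D_1 - \del D_0$ must match the $\D a_1$-, $\D b$-, or $\D a_2$-index attached to that edge. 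So first I would fix, once and for all, the standard representatives of the simple closed curves $a_1, a_2, a_3, \widecheck a_1, \widecheck a_2, b_1, b_2$ on the thrice-punctured torus $\T_3$, as displayed in the figure accompanying Table \ref{tab:d=2}.

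The core computation is to evaluate $\ind(q, \del D_1 - \del D_0)$ for each of the three types of consecutive letter-pairs. The key observation is that the vanishing cycles $c_0, c_1 = \del D_0, \del D_1$ are the curves named by the two consecutive letters in the given subsequence, and that these two curves are isotopic in $\T_2$ (after forgetting $q = q_3$) precisely because the factorization lifts $(a_1a_2^2b_1)^3$; this is what makes $p$ a matching path and is exactly the content of Proposition \ref{factorization1+2}. Hence $\del D_1 - \del D_0$ bounds the trace $A$ of an isotopy in $\T_2$, and $\ind(q, \del D_1 - \del D_0)$ equals the signed number of times $q$ is swept across by that isotopy. I would therefore verify, curve-pair by curve-pair, the three tabulated rules: that $a_1 \widecheck a_1$ and $\widecheck a_1 a_1$ contribute $\pm 1$ while $a_1 a_1$ and $\widecheck a_1 \widecheck a_1$ contribute $0$; that $a_3 a_2, a_2 \widecheck a_2$ give $+1$, $a_2 a_3, \widecheck a_2 a_2$ give $-1$, and equal letters give $0$; and that $b_i b_j$ gives $i - j$. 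Each of these is a direct reading of how the two curves differ by a Dehn twist localized around $q$ (equivalently, by the defining relations \eqref{eq:S_1^3b_1}--\eqref{eq:S_1^3a_i}, in which the $q$-dependence is concentrated).

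Concretely, I expect the $b$-sequence case to follow verbatim from the $d=1$ computation in Lemma \ref{V-intersections}, since forgetting $q_2$ reduces it to $\ind(b_i - b_j, q) = i - j$, which was already checked. The $a_1$- and $a_2$-sequence cases are the genuinely new content, and here the decoration by hats and checks encodes whether the representative winds around $q = q_3$ on one side or the other; the difference of two such representatives over a single edge is a small loop encircling $q$ with the stated sign, giving the $\pm 1$ or $0$ index. I would read these signs off the figure accompanying Table \ref{tab:d2-chain} together with the curve conventions in Table \ref{tab:d=2}, exactly as in the proof of Lemma \ref{V-intersections}.

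The main obstacle will be the sign and orientation bookkeeping: the index $\ind(q, \del D_1 - \del D_0)$ depends on the chosen orientation of $\del D_0 = \del D_1$ and on the direction of the matching path $p$ (from $\delta_0$ to $\delta_1$), as emphasized in the discussion of matching cycles preceding Lemma \ref{int-ind}. I must orient every edge consistently with the counter-clockwise ordering of arcs from the basepoint in Figure \ref{fig:sing-scheme}, so that the order in which two letters appear in the factorization determines which thimble is $D_0$ and which is $D_1$; getting the convention backwards would flip every nonzero entry. Once the orientation convention is pinned down to agree with the one used for the $\D$-sequence differences (so that, e.g., $b_{i_1} b_{i_2}$ yields $i_1 - i_2$ rather than $i_2 - i_1$), the remaining verifications are the routine local linking-number computations indicated above, and the lemma follows.
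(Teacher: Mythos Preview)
Your proposal is correct and follows essentially the same route as the paper's own proof: invoke Lemma~\ref{int-ind} to reduce $S\circ\V_p$ to the linking number $\ind(q,\del D_1-\del D_0)$, and then read off that index from the explicit curves $a_1,a_2,a_3,\widecheck a_1,\widecheck a_2,b_1,b_2$ drawn in the figure accompanying Table~\ref{tab:d=2}. The paper compresses the entire verification into the phrase ``straightforward from the figure,'' whereas you spell out the curve-pair case analysis and the orientation conventions, but the underlying argument is the same.
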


\begin{proof}
As in the proof of Lemma \ref{V-intersections},
 it is enough to apply
Lemma \ref{int-ind} for the calculation $S\circ \V_p=\ind(q,\del D_1-\del D_2)$, with
$q_i=S_i\cap \T$, $q=S\cap\T$.
 Calculation of the indices is straightforward from the figure accompanying Table \ref{tab:d=2}.
\end{proof}

\begin{Prop}\label{homol-distinct-d=2}
The 56 monodromy factorizations listed in Table \ref{tab:d=2}
represent 56
sections $S\subset X\sm (S_1\cup S_2)$ whose homology classes $[S]\in H_2(X)$ are pairwise distinct.
\end{Prop}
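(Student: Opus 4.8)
The plan is to mirror exactly the argument used for Proposition \ref{homol-distinct-d=1}, transporting it from the $d=1$ setting to the $d=2$ setting. The essential fact we are allowed to use is Lemma \ref{VpS-d=2}, which tells us that for each of the $56$ sections $S$ and each edge $p$ in the figure accompanying Table \ref{tab:d2-chain}, the intersection index $S\circ\V_p$ equals the corresponding entry of the $\D$-sequences $\D a_1$, $\D b$, $\D a_2$ of the factorization representing $S$. Since homology classes are detected by their intersection numbers with the matching classes $[\V_p]$, two sections with distinct $\D$-sequences must have distinct classes $[S]\in H_2(X)$. Thus the entire proposition reduces to the purely combinatorial assertion that the $56$ factorizations in Table \ref{tab:d=2} produce $56$ pairwise distinct triples of $\D$-sequences.

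First I would record that distinctness of the $\D$-data is equivalent to distinctness of the underlying index sequences up to the ambiguities introduced by taking differences. Concretely, the $\D b$-sequence recovers the $b$-index sequence only up to a global additive constant, and similarly each $\D a_\ell$-sequence recovers its $a_\ell$-index pattern only up to the cyclic/constant ambiguity inherent in the difference operation. So I must argue that no two of the $56$ factorizations can have index sequences differing by such a global shift in every block simultaneously. The cleanest way to do this is to exhibit, as in the $d=1$ case, a normalization that pins down the constant: each factorization contains the letter $a_1$ (equivalently, a fixed reference marked point appears), so the $a_1$-sequence is anchored and the additive freedom is eliminated. Once the constants are fixed, distinct factorizations yield distinct $\D$-sequences.

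The execution then splits into verifying this for the sample factorizations of Types 1--9 and for all their variations. For the nine sample rows, one simply reads off the three $\D$-sequences from Table \ref{tab:d2-chain} and checks by inspection that the nine triples are pairwise distinct. For the variations, I would invoke the structure described in the paragraph on sample factorizations and their variations: the shift-variations cyclically permute the three blocks and the block-permutations rearrange the $a$-letters within a single block. Because the $\D$-sequences encode exactly the ordered pattern of letters, I would check that these two operations act faithfully on the $\D$-data within each type, so that the $\#$ variations of a given sample produce $\#$ distinct $\D$-triples, and moreover that variations coming from different types never collide. Summing the counts in the $\#$ column gives $8+6\cdot8=56$, matching the claimed total, so the correspondence between factorizations and $\D$-triples is a bijection onto its image of the correct size.

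The main obstacle I anticipate is not the homological step, which is immediate from Lemma \ref{VpS-d=2}, but the bookkeeping needed to rule out accidental coincidences of $\D$-sequences across different types or across different variations of the same type. The difference operation loses information (a global constant in each block and, potentially, subtleties at the block boundaries where the three blocks are cyclically concatenated), so I must be careful that the anchoring by the omnipresent letter $a_1$ genuinely removes all ambiguity and that no two of the $56$ index patterns become identical after passing to differences. I expect this to be a finite, checkable verification rather than a conceptual difficulty, and the presence of $a_1$ in every factorization (just as in the $d=1$ argument) is precisely what makes the normalization work.
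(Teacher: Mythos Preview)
Your reduction via Lemma \ref{VpS-d=2} to the combinatorial claim that the $56$ triples $(\D a_1,\D b,\D a_2)$ are pairwise distinct is exactly what the paper does. The gap is in your proposed verification of that claim. In the $d=1$ case there is a single $a$-sequence of length $9$, and the universal presence of the letter $a_1$ pins its one additive constant; here, however, there are three independent subsequences (the $a_1$-, $a_2$- and $b$-sequences), each carrying its own additive ambiguity under the difference operation. The presence of the letter $a_1$ anchors only the first of these. Your sentence ``once the constants are fixed, distinct factorizations yield distinct $\D$-sequences'' therefore does not follow from the single anchoring you describe. Concretely, the $b$-constant cannot be anchored in the same way: the Type~6 sample factorization $a_1a_2^2b_2\,a_1a_2^2b_2\,\widecheck a_1a_2a_3b_2$ has $b$-sequence $b_2b_2b_2$, containing no $b_1$ at all. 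So the shortcut borrowed from Proposition \ref{homol-distinct-d=1} does not transfer.

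The paper sidesteps any attempt to recover index sequences from $\D$-sequences and instead organizes a direct hierarchical check on the $\D$-data itself: the cyclic pattern of $\D a_1$ already separates the nine types into three groups (Types 1--3 with $\D a_1=000$; Types 4--6; Types 7--9); adjoining $\D b$ then distinguishes all nine types from one another; and $\D a_2$ finally separates the variations (eight for Type~1, six for each of the others) within a fixed type. Your outline can be repaired along these lines, but it needs this structured case split rather than a single anchor.
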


\begin{proof}
The cyclic order of indices in $\D a_1$ distinguishes three groups of
monodromy factorizations 
in Table \ref{tab:d=2} (reproduced in Table \ref{tab:d2-chain}): those of Types 1--3, 4--6 and 7--9.
Combining $\D a_1$ with $\D b$, we can distinguish all nine types from each other.

To distinguish variations of each type from each other (eight variations for Type~1 and six for other types),
it is enough to use $\D a_2$. 
Interpreting $\D$-sequences as intersection indices via Lemma \ref{VpS-d=2},
we conclude that all $56$ sections are homologically distinct.
\end{proof}

\subsection{Chains of matching cycles in $X\sm(S_1\cup S_2\cup S_3)$}
The factorizations in Table \ref{tab:d=3} are liftings of $(a_1a_2a_3b_1)^3$. Correspondingly, we split each 
factorization sequence into four subsequences each of length $3$:
a {\it $b$-sequence} consisting of the $4$th, $8$th and $12$th terms, and $a_i$-sequences, $1\leq i\leq 3$, consisting of the $i$th, $(4+i)$th and $(8+i)$th terms.
In the figure accompanying Table \ref{tab:d3-chain}, we sketched the graph obtained by connecting  the points of $\D\subset S^2$ 
representing consecutive terms of each subsequence by edges.
As in the previous case, we define four difference sequences: $\D a_i$, $1\leq i\leq 3$, and $\D b$, whose terms correspond to the edges
of this graph. Namely, for the edge connecting vertices representing terms $x$ and $y$ in the factorization, the difference index in $\D b$
is defined as $i-j$ if $xy=b_ib_j$, and in the $\D a_i$-sequences, $1\leq i\leq 3$, it is defined as
\begin{itemize}\item
$0$,  if $x=y$,
\item
$1$,  if  $xy$   is $a_1\widecheck a_1$, $a_2\widecheck a_2$, $\widehat a_3a_3$,  or $\widehat a_2a_2$,
\item
$-1$,   if $xy$  is $\widecheck a_1 a_1$, $\widecheck a_2 a_2$, $a_3\widehat a_3$, or $a_2\widehat a_2$.      
\end{itemize}

Examples for the sample factorizations are shown in Table \ref{tab:d3-chain}.

\begin{table}[h!]
\caption{$\D$-sequences of sample factorizations of sections $S\subset X\sm(S_1\cup S_2\cup S_3)$. Accompanying figure: Matching cycles in $X\sm(S_1\cup S_2\cup S_3)$.}	
\label{tab:d3-chain}
\label{fig:E6-graph}
\raisebox{-23mm}{\includegraphics[height=4.8cm]{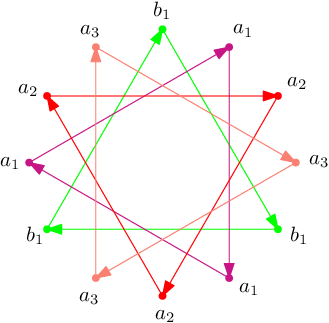}}
\scalebox{0.95}{\begin{tabular}{cccccc}
\hline
Type  & Sample factorization &$\D b$&$\D a_1$&$\D a_2$&$\D a_3$\\
\hline
1 & $a_1a_2\widehat a_3b_1a_1a_2\widehat a_3b_1a_1\widehat a_2a_3b_1$ &000&000&0-11&01-1 \\
2 & $a_1a_2\widehat a_3b_1a_1\widehat a_2\widehat a_3b_2a_1a_2a_3b_1$ &-110&000&-110&01-1 \\
3 & $a_1\widehat a_2\widehat a_3b_2a_1a_2\widehat a_3b_2a_1a_2a_3b_1$ &01-1&000&10-1&01-1 \\
\hline
4 & $a_1a_2\widehat a_3b_2\widecheck a_1a_2a_3b_1a_1a_2a_3b_1$ &10-1&1-10&000&10-1\\
5 & $a_1a_2a_3b_2\widecheck a_1a_2a_3b_1a_1a_2\widehat a_3b_2$ & 1-10&1-10&000&-110\\
6 & $a_1a_2a_3b_2\widecheck a_1a_2\widehat a_3b_2a_1a_2a_3b_2$ & 000&1-10&000&-110\\
\hline
7 & $a_1a_2a_3b_2\widecheck a_1a_2a_3b_2\widecheck a_1\widecheck a_2a_3b_1$ &01-1&10-1&01-1&000 \\
8 & $a_1a_2a_3b_2\widecheck a_1\widecheck a_2a_3b_1\widecheck a_1a_2a_3b_1$ & 10-1&10-1&1-10&000\\
9 & $a_1\widecheck a_2a_3b_1\widecheck a_1a_2a_3b_1\widecheck a_1a_2a_3b_1$ & 000&10-1&-101&000\\
\hline
\end{tabular}}
\end{table}

The proof of the following Lemma is analogous to those of Lemmas \ref{V-intersections} and \ref{VpS-d=2}.

\begin{Lem}\label{VpS-d=3}
Let $S\subset X\sm(S_1\cup S_2\cup S_3)$ be a section represented by one of the $27$ monodromy factorizations from Table \ref{tab:d2-chain}
for the standard system of arcs on Figure \ref{fig:sing-scheme}. 
Then for each edge $p$ in the figure accompanying Table \ref{tab:d3-chain},
the intersection index 
$S\circ\V_{p}$ coincides with the corresponding difference term in the associated $\D$-sequence considered above.
\qed\end{Lem}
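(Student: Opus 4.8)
The plan is to reproduce the two-step reduction already used for Lemmas \ref{V-intersections} and \ref{VpS-d=2}, now in $\PMod(\T_4)$. First I would apply Lemma \ref{int-ind}: for an edge $p$ of the graph in the figure accompanying Table \ref{tab:d=3} whose two endpoints carry the vanishing cycles $x$ (first) and $y$ (second), we get $S\circ\V_p=\ind(q_4,\del D_1-\del D_0)$, where $q_4=S\cap\T$ and $D_0,D_1$ are the Lefschetz thimbles bounded by $x$ and $y$, chosen in the complement of $S_1\cup S_2\cup S_3$ exactly as in Section \ref{2-cycles}. Since $\del D_0$ and $\del D_1$ are isotopic to $x$ and $y$ in $\T_4\sm\{q_1,q_2,q_3\}$, this equals $\ind(q_4,y-x)$. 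The two endpoints of any edge always lie in a single subsequence, so $x$ and $y$ are two lifts of one and the same curve of $\T_3$ under the forgetful map $r_4$; hence $y-x$ is a genuine $1$-boundary in $\T\sm\{q_1,q_2,q_3\}$ and its linking number with $q_4$ is well defined. Because the tabulated $\D$-index depends only on the ordered pair-type $xy$, the whole statement reduces to finitely many local computations of $\ind(q_4,y-x)$.

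Next I would read these linking numbers directly off the figure accompanying Table \ref{tab:d=3} (the torus diagram displaying $a_1,a_2,a_3,\widehat a_2,\widehat a_3,\widecheck a_1,\widecheck a_2,b_1,b_2,b_3$ together with the four marked points $q_1,q_2,q_3,q_4$). The essential geometric facts to extract are: the check decoration reroutes the curve so as to wind once more around $q_4$, giving $\ind(q_4,\widecheck a_i-a_i)=1$; the hat decoration reroutes it to wind once fewer, giving $\ind(q_4,a_i-\widehat a_i)=1$; along the $b$-subsequence the winding about $q_4$ changes by one per index step, giving $\ind(q_4,b_i-b_j)=\pm(i-j)$ with the sign fixed by the orientation convention of Figure \ref{fig:sing-scheme}, exactly as in the cases $d=1,2$; and two equal symbols contribute $0$.

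Finally I would match these values against the combinatorial definition of the $\D a_i$- and $\D b$-sequences stated just before the lemma. That definition assigns to an ordered adjacent pair $xy$ precisely the difference $\ind(q_4,y-x)$ computed above — for instance $a_1\widecheck a_1\mapsto+1=\ind(q_4,\widecheck a_1-a_1)$, $\widecheck a_1 a_1\mapsto-1$, $\widehat a_3 a_3\mapsto+1=\ind(q_4,a_3-\widehat a_3)$, $a_2\widehat a_2\mapsto-1$, and $b_ib_j\mapsto i-j$ — so the computed linking numbers agree edgewise with the tabulated indices, which is the assertion of the lemma.

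The only genuinely delicate point, as signalled by the remark that the proof is ``analogous'', lies in the second step: one must read off from the diagram the correct \emph{signs} of the winding of the rerouted curves $\widehat a_i,\widecheck a_i$ and of the successive $b_i$ about $q_4$, and verify that these are the signs built into the lookup table — in particular that the hat and check decorations wind $q_4$ in opposite senses, and that the $a$- and $b$-conventions carry opposite orientations (as already happened for $d=1$, where $\ind(a_i-a_j,q)=j-i$ while $\ind(b_i-b_j,q)=i-j$). Once the positions of the curves relative to $q_4$ are correctly identified from the figure, each individual index is a one-line linking-number computation, and the check over all edges of all nine sample types (indeed over the finitely many local pair-types) is routine.
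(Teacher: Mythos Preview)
Your proposal is correct and is precisely the argument the paper intends: the paper's own proof is omitted with the remark that it is ``analogous to those of Lemmas \ref{V-intersections} and \ref{VpS-d=2}'', i.e., reduce via Lemma \ref{int-ind} to the linking numbers $\ind(q_4,y-x)$ and then read these off from the curve diagram accompanying Table \ref{tab:d=3}. Your identification of the only subtlety --- getting the signs of the hat/check reroutings and of the $b_i$-family right from the figure --- is exactly what the earlier proofs handle by direct inspection, so nothing further is needed.
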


\begin{Prop}\label{homol-distinct-d=3}
The $27$ monodromy factorizations listed in Table \ref{tab:d=3} represent $27$
sections $S\subset X\sm (S_1\cup S_2\cup S_3)$ whose 
homology classes $[S]\in H_2(X)$ are  pairwise distinct.
\end{Prop}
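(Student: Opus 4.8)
The plan is to mimic the proofs of Propositions \ref{homol-distinct-d=1} and \ref{homol-distinct-d=2}, using the intersection-index data encoded in the $\D$-sequences. By Lemma \ref{VpS-d=3}, each section $S\subset X\sm(S_1\cup S_2\cup S_3)$ represented by a factorization in Table \ref{tab:d=3} has its intersection numbers $S\circ\V_{p}$ with the matching cycles $\V_p$ given precisely by the terms of the four difference sequences $\D b$, $\D a_1$, $\D a_2$, $\D a_3$. Since intersection numbers $S\circ\V_p = [S]\cdot[\V_p]$ depend only on the homology class $[S]\in H_2(X)$, it suffices to show that the twelve-term concatenated $\D$-sequence distinguishes all $27$ factorizations: if two factorizations yield the same $\D$-data, their sections have equal intersection pairing against the $\V_p$, hence (since these classes generate enough of $H_2(X)$ to detect sections, as in the degree $d=1,2$ cases) are homologous.

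The key steps, in order, are as follows. First I would record that by Proposition \ref{factorization1+3} all $27$ factorizations are genuinely distinct factorizations lifting $(a_1a_2a_3b_1)^3$, so the only question is homological separation. Second, I would observe from Table \ref{tab:d3-chain} that the cyclic pattern of $\D a_1$ (respectively the vanishing or non-vanishing of certain coordinates) already partitions the nine \emph{types} into the three blocks Types 1--3, 4--6, 7--9, exactly as $\D a_1$ did in the $d=2$ argument; combining $\D a_1$ with $\D b$ then separates the nine types from one another. Third, within each fixed type I would use the remaining sequences $\D a_2$ and $\D a_3$ to distinguish the three variations of that type, since cyclically permuting the blocks or applying block-permutations alters these difference terms. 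Collating these three steps shows the map from the $27$ factorizations to their $\D$-sequences is injective, and Lemma \ref{VpS-d=3} converts injectivity of $\D$-sequences into injectivity of homology classes.

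The main obstacle I anticipate is verifying that the $\D$-sequences genuinely separate all variations within each type, not merely the nine sample types. The tables display only the samples; for the full list of $27$ one must check that shift-variations and block-permutations produce $\D$-sequences that remain pairwise distinct after accounting for the cyclic ambiguity inherent in reading the three blocks (the difference sequences are naturally indexed cyclically, so one must confirm no two distinct variations become identical once this cyclic symmetry is factored in). This is the same subtlety flagged in Proposition \ref{homol-distinct-d=1} by the remark that all $a$-sequences ``contain the letter $a_1$ at least once,'' which pins down the constant ambiguity in reconstructing an index-sequence from its differences. I would therefore argue that, because each $a_i$-sequence here has length three and the markings $\widehat a$, $\widecheck a$ break the symmetry between the indices, the difference data together with this normalization uniquely recovers each factorization, so no collision occurs. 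Once this bookkeeping is complete, the conclusion that all $27$ homology classes are pairwise distinct follows exactly as in the lower-degree cases.
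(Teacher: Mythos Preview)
Your approach is essentially the paper's: distinguish the nine types by the pair $(\D a_1,\D b)$ (viewed cyclically), then separate the three shift-variations within each type by a nonzero $\D a_i$-sequence, and invoke Lemma~\ref{VpS-d=3} to convert distinct $\D$-data into distinct homology classes. Two small cleanups: in the $d=3$ case each block contains exactly one lift of each $a_i$, so there are \emph{no} nontrivial block-permutations (only the three shift-variations per type, giving $9\times 3=27$), and the ``cyclic ambiguity'' you worry about is not really present once you view the $\D$-terms as intersection numbers with \emph{specific} matching cycles $\V_p$---the paper simply notes that any $\D a_i$-sequence different from $000$ suffices to separate the three shifts, using $\D a_2$ for Types~1--3 and $\D a_1$ for Types~4--9.
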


\begin{proof}
The Types 1-9 of factorizations are distinguished by the pair of sequences (viewed up to cyclic permutations) $\D b$ and $\D a_1$.
Each type has three shift-variations, and 
to distinguish them from each other we can use any $\D a_i$-sequence different from $000$:
for instance, $\D a_2$ for Types 1, 2 and 3, and $\D a_1$ for the other types.
The proof is completed by appealing to Lemma \ref{VpS-d=3}.
\end{proof}

\section{Straightening sections}\label{Schevchishin}
For generalities on $J$-holomorphic curves we refer the reader to \cite{MS} or \cite{W}.

\subsection{Proof of Theorem \ref{sections-are-lines}}\label{Sec:Schevchishin}
Let $J_0$ be the given integrable almost complex structure on $X$.
Choose a small neighborhood $U\subset X\sm(S_1\cup\dots\cup S_d)$ of the $12$-point critical locus $\Crit\subset X$ of $f$.
Denote by $\J$ the space of almost complex structures  $J\colon TX\to TX$ such that
\begin{enumerate}\item
$J$ coincides with $J_0$ inside $U$,
\item
the fibres of $f$ are pseudoholomorphic.
\end{enumerate}
Clearly these conditions imply that
the orientations on $X$ and on the fibres of $f$ induced by $J$ and $J_0$ coincide.

\begin{Lem}\label{tame}
For each $J\in\J$ there exists a symplectic structure $\om$ on $X$ which tames $J$.
\end{Lem}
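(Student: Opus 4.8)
The plan is to construct $\om$ explicitly as a sum of two contributions, one controlling the fibre directions and one controlling the base directions, and then to verify taming pointwise. The fibres of $f$ are $J$-holomorphic for every $J\in\J$ by condition (2), so the natural first step is to fix a closed $2$-form $\om_f$ that restricts to a positive area form on each smooth fibre. For instance, one can take a fibrewise K\"ahler form for $J_0$ and patch it to a global closed form using that the generic fibre is a torus; alternatively one invokes the standard fact that an elliptic LF $f\colon X\to S^2$ carries a closed $2$-form positive on fibres, obtained from a relatively ample class such as the Poincar\'e dual of a section plus a multiple of $[\T]$. Since $f$ has only fishtail singular fibres, $\om_f$ can be chosen to remain positive on the smooth part of every fibre, and near $\Crit$ it agrees with the standard K\"ahler form of $J_0$ on $U$.

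Next I would add a large pullback from the base: set $\om=\om_f+K\,f^*\om_{S^2}$, where $\om_{S^2}$ is an area form on $S^2$ and $K>0$ is a constant to be chosen. The key computation is to check $\om(v,Jv)>0$ for every nonzero tangent vector $v\in T_xX$. Decompose $v=v^{\mathrm{vert}}+v^{\mathrm{horiz}}$ relative to the $J$-invariant vertical subbundle $\ker df$ (where $f$ is a submersion). Because the fibres are $J$-holomorphic, $J$ preserves $\ker df$, and $\om_f(v^{\mathrm{vert}},Jv^{\mathrm{vert}})>0$ whenever $v^{\mathrm{vert}}\neq0$, by positivity of $\om_f$ on fibres. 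The term $f^*\om_{S^2}$ vanishes on vertical vectors and is positive on the image of $v^{\mathrm{horiz}}$ under $df$. A standard compactness argument then shows that for $K$ large enough the cross terms and the possibly indefinite behaviour of $\om_f$ in the horizontal directions are dominated, yielding $\om(v,Jv)>0$ everywhere outside $\Crit$.

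The remaining issue is the behaviour at the critical points $\Crit\subset X$, where $df$ drops rank and the vertical/horizontal splitting degenerates. Here condition (1) is essential: on the neighbourhood $U$ we have $J=J_0$, and $\om_f$ was arranged to coincide with the fixed K\"ahler form of $J_0$ there, which already tames $J_0=J$ on $U$. Thus taming near $\Crit$ holds for free and is independent of $K$, so enlarging $K$ to fix the non-critical region does not spoil it. This is the step I expect to be the main obstacle, since the argument must produce a single constant $K$ that works simultaneously on the noncompact-looking region $X\sm U$ and yet not interfere with the prescribed form on $U$; the resolution is that $X\sm U$ has compact closure and $J|_{X\sm U}$ varies in a compact family, so a uniform $K$ exists by continuity and compactness.

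Finally, $\om$ is closed as a sum of closed forms, and taming is an open condition verified pointwise above, so $\om$ is the desired symplectic form taming $J$. I would remark that closedness together with the strict positivity $\om(v,Jv)>0$ forces $\om$ to be nondegenerate, so no separate check of the symplectic (maximal-rank) condition is needed.
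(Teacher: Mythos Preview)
Your proposal is correct and follows essentially the same Thurston-type strategy as the paper: a closed $2$-form positive on fibres plus a large multiple of $f^*\om_{S^2}$, with condition~(1) handling the critical locus. The paper's proof is more concise only because it takes your $\om_f$ to be the global K\"ahler form $\om_0$ of the original algebraic structure $J_0$ on $X$ directly, which is already closed, positive on fibres, and tames $J=J_0$ on $U$---so no patching or separate construction of $\om_f$ is needed.
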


\begin{proof}
The taming structure is $\om_0+Mf^*\om_{S^2}$ for a sufficiently large constant $M$, where $\om_0$
comes from the K\"ahler structure for the initial complex algebraic structure on $X$, and $\om_{S^2}$
is a symplectic form on $S^2$.
\end{proof}

\begin{Lem}\label{connected}
$\J$ is a path-connected space.
\end{Lem}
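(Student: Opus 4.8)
The plan is to show that $\J$ is path-connected by exhibiting it as (essentially) a convex set, or at least a space that deformation retracts onto something convex, using the standard fact that fiberwise-compatible almost complex structures form a contractible family. First I would fix the reference structure $J_0$ and analyze the defining conditions pointwise. Condition (1) pins $J$ down completely on the neighborhood $U$, so the only freedom lives over $X \sm U$; condition (2) requires each tangent space to the fibers to be a $J$-complex line. Thus at each point $x \in X$ away from $\Crit$, the fiber direction $T_x(f^{-1}(f(x)))$ is a distinguished $2$-plane that $J$ must preserve, and $J$ must restrict to a complex structure on it inducing the correct (fixed) orientation.

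The key observation is that the space of complex structures on a $2$-plane inducing a fixed orientation is a single point-free contractible space (it is a point once we also fix a conformal class, but in general the space of orientation-compatible complex structures on $\R^2$ is contractible — indeed it can be identified with the upper half plane, or equivalently $GL^+(2,\R)/\mathbb{C}^*$). More usefully, the whole space of $J$'s compatible with a fixed fibration structure and inducing fixed orientations is well known to be nonempty and contractible: it is a section space of a bundle whose fibers are each contractible (being modeled on a Siegel-type domain of compatible complex structures). I would make this precise by describing $\J$ as the space of sections of a fiber bundle $\mathcal{E} \to X$ whose fiber over $x$ is the space of linear complex structures on $T_xX$ that preserve the fiber tangent plane, induce the correct orientations on both $T_xX$ and the fiber, and agree with $J_0$ over $U$. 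Since each fiber of $\mathcal{E}$ is contractible (it retracts onto the compatible ones, which form a convex — hence contractible — set after choosing an auxiliary metric), the section space is itself connected; in fact contractible.

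The cleanest route is to reduce to convexity via taming or compatibility: choose a fixed Riemannian metric (or the Kähler metric from $J_0$) and, for each $J \in \J$, observe that the set of almost complex structures preserving the fiber foliation and inducing the fixed orientations, compatible with a given metric on the fibers, is parametrized by a contractible symmetric space at each point. Then given any two $J_1, J_2 \in \J$, I would connect each to a canonical fiberwise-compatible structure $J_{\mathrm{can}}$ (built from $J_0$ on the fibers and extended using the fibration) by the standard geodesic/retraction in the space of compatible structures, taking care that the homotopy is stationary on $U$ (where both already equal $J_0$) and preserves conditions (1)--(2) throughout. Concatenating these paths gives a path from $J_1$ to $J_2$ in $\J$.

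The main obstacle I expect is not the pointwise linear algebra (which is classical and contractible) but the bookkeeping required to carry out the fiberwise retraction globally while respecting \emph{all three} constraints simultaneously: the homotopy must at every time $t$ keep the fibers pseudoholomorphic, keep agreement with $J_0$ on $U$, and induce the fixed orientation, and one must verify these are preserved by the chosen contraction. The cleanest way to dispose of this is to invoke the general principle that the space of almost complex structures taming (or compatible with) a fixed symplectic form is contractible, applied fiberwise and relative to $U$; by Lemma \ref{tame} such a taming structure exists, which anchors the argument and guarantees $\J$ is nonempty, so that the retraction is onto a nonempty contractible target and connectedness follows.
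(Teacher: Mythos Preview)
Your proposal is correct and follows essentially the same approach as the paper: both view $\J$ as the space of sections of a bundle over $X$ whose fiber at $x$ is the (contractible) space of linear complex structures on $T_xX$ preserving the fiber tangent plane with the correct orientations, and conclude path-connectedness from fiberwise contractibility. The paper is simply terser, recording the pointwise space explicitly as the quotient of block upper-triangular matrices $\left[\begin{smallmatrix}A&B\\0&C\end{smallmatrix}\right]\in GL_4(\R)$ with $\det A,\det C>0$ by the subgroup $\left[\begin{smallmatrix}a&b\\0&c\end{smallmatrix}\right]\in GL_2(\C)$; your detour through metric-compatibility and taming is unnecessary but harmless.
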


\begin{proof} There are no obstructions to connecting a pair of structures in $\J$ since conditions (1) and (2) imply contractibility of the space of corresponding structures $J_x$ on $T_xX$ for all points $x\in X$.
\footnote{
This contractible space is just the quotient of the subgroup of $GL_4(\R)$ formed by $2\!\!\times\!\!2$-block matrices 
$\left[ \begin{array}{c|c}
   A & B \\
   \midrule
   0 & C \\
\end{array}\right]$ with
$\det A,\det C>0$
by subgroup of matrices
$\left[\begin{matrix} a&b\\0&c\end{matrix}\right]\in GL_2(\C)
$.}
\end{proof}

\begin{Lem}\label{exist}
	For any given disjoint sections $S_i$, $i=1,\ldots,d$,
there exists an almost complex structure $J\in\J$ 
such the $S_i$ are $J$-holomorphic.
\end{Lem}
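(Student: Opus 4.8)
The plan is to prescribe $J$ explicitly along the sections and on the neighborhood $U$, and then to extend it over the rest of $X$ using the contractibility of the fibres of the bundle of admissible structures, which is exactly the space identified in the proof of Lemma \ref{connected}.

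First I would record what an admissible structure must do pointwise. Over a regular point $x$ of $f$, condition (2) means precisely that $J$ preserves the vertical subspace $V_x=\ker df_x\subset T_xX$; together with condition (1) and the orientation remark, the set $\mathcal J_x$ of admissible $J_x$ is the contractible fibre described in the footnote to Lemma \ref{connected} (a choice of a vertical structure $J^v_x$ compatible with the fibre orientation, a transverse structure, and an affine off-diagonal term). Thus $\mathcal J=\bigsqcup_x\mathcal J_x\to X$ is a fibre bundle with contractible fibres, and elements of $\J$ are exactly its global sections.

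Next I would write down the prescribed section of $\mathcal J$ over the closed set $K=\bar U\cup S_1\cup\dots\cup S_d$. On $\bar U$ I set $J=J_0$; shrinking $U$ if necessary we may assume $\bar U$ is disjoint from the sections, so there is no conflict. Along each $S_i$, which is transverse to the fibres, I decompose $T_xX=T_xS_i\oplus V_x$ and define $J$ by letting $J|_{V_x}$ be any orientation-compatible vertical structure and by setting $J|_{T_xS_i}=(df_x)^{-1}\circ j\circ df_x$, where $j$ is the standard complex structure on $\P^1$. By construction this $J$ preserves $V_x$, so the fibres remain pseudoholomorphic, it preserves $T_xS_i$, so $S_i$ becomes $J$-holomorphic, and the induced orientation agrees with that of $J_0$ (the fibre orientation followed by the pulled-back base orientation); hence the prescribed value lies in $\mathcal J_x$. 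Since the sections are pairwise disjoint and disjoint from $\bar U$, these prescriptions fit together into a well-defined section of $\mathcal J$ over $K$.

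Finally, because $\mathcal J\to X$ has contractible fibres, a section defined over the closed set $K$ extends to a global section $J\in\J$; this is the standard unobstructed-extension argument, the same mechanism that guarantees existence of tame or compatible almost complex structures. The resulting $J$ lies in $\J$ and makes every $S_i$ holomorphic, as required. The only point requiring care is this extension step: one must ensure that $K$ is a neighbourhood deformation retract---for instance, arrange $\bar U$ to be a compact submanifold-with-boundary and triangulate $X$ so that $K$ is a subcomplex---so that the homotopy extension property for the fibration $\mathcal J\to X$ applies; with contractible fibres there is then no obstruction to extending.
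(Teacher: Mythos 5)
Your proposal is correct and follows essentially the same route as the paper: prescribe an admissible $J$ along the sections (and keep $J_0$ on $U$), then extend over $X$ using the contractibility of the pointwise fibres of admissible structures established in the proof of Lemma \ref{connected}. You merely spell out the details (the block prescription on $T_xS_i\oplus V_x$ and the homotopy-extension step) that the paper leaves implicit.
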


\begin{proof}
One can construct a $J$ satisfying (1) and (2) in a neighbourhood of the $S_i$ that makes $S_i$ $J$-holomorphic
and then extend it to $X$ using the absence of obstructions (see the proof of previous lemma).
\end{proof}


Define $\JS$ to be the set of pairs $(J,\SS)$, where $J\in\J$ and $\SS=\{\S_1,\dots,\S_d\}$ is a set of $J$-holomorphic sections such that $[\S_i]=[S_i]\in H_2(X)$ for each $i$.
Denote by $p\colon \JS\to\J$ the projection $(J,\SS)\mapsto J$.

\begin{Lem}\label{homeomorphism}
The projection $p$ is a homeomorphism. 
\end{Lem}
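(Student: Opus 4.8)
The plan is to show that $p\colon\JS\to\J$ is a continuous bijection with continuous inverse. Continuity of $p$ is immediate since it is a restriction of a projection. The heart of the matter is bijectivity: for each $J\in\J$ I must produce a \emph{unique} $J$-holomorphic representative $\S_i$ in each section class $[S_i]$. The existence part for one fixed structure was handled in Lemma \ref{exist}, but here I need the stronger statement that for \emph{every} $J\in\J$ the class $[S_i]$ is represented by a $J$-holomorphic curve, and that this representative is unique. The natural tool is Gromov--Witten / automatic-regularity theory for $J$-holomorphic curves of low genus and low self-intersection. Since each $[S_i]$ is a section class, it has $[S_i]\cdot[\T]=1$, square $-1$, and the expected (virtual) genus-$0$ dimension count together with adjunction pins the curve down.

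First I would record the numerical input: a $J$-holomorphic section $\S$ in class $[S_i]$ is an embedded sphere of self-intersection $-1$. By the adjunction formula together with $c_1(E(1))$ being Poincar\'e dual to $[\T]$ (as recalled in Section \ref{sec:elliptic}), one computes that such a curve is regular, i.e. the linearized Cauchy--Riemann operator is surjective, by the standard automatic-transversality criterion for immersed spheres $C$ with $c_1(TX)\cdot[C]\ge 1$ (here $c_1\cdot[S_i]=[\T]\cdot[S_i]=1$). Hence the moduli space of $J$-holomorphic curves in class $[S_i]$ is a smooth manifold of the expected dimension, which the Riemann--Roch count makes zero-dimensional. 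This simultaneously gives the \emph{local} uniqueness (the solutions are isolated) and the smooth dependence on $J$ that will feed into continuity of $p^{-1}$.

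Next I would upgrade local finiteness to a single point by a cobordism/degree argument using the path-connectedness of $\J$ from Lemma \ref{connected}. Because $\J$ is connected and the taming forms of Lemma \ref{tame} give a uniform symplectic area bound $\om\cdot[S_i]$ along any path, Gromov compactness applies: no bubbling or breaking can occur, since any limit stable map would have to distribute the class $[S_i]$ among components, but $[S_i]\cdot[\T]=1$ and positivity of intersections forbid a nonconstant fibre component, while a section-class bubble would violate the area/energy accounting. Thus the count of $J$-holomorphic sections in class $[S_i]$ is a well-defined invariant independent of $J\in\J$; evaluating it at the integrable $J_0$ (where the algebraic sections are the unique representatives, by Proposition \ref{claases-of-sections} and the rigidity of $(-1)$-curves on the rational elliptic surface) shows this count equals $1$. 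Disjointness of the $\S_i$ for distinct $i$ follows from positivity of intersections, since $[S_i]\cdot[S_j]=0$ for $i\ne j$ forces the distinct embedded curves to be disjoint. This establishes that $p$ is a bijection.

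The main obstacle I anticipate is making the compactness step airtight in the presence of the constraint that $J$ is \emph{fixed} to equal $J_0$ near the critical locus $U$; this is exactly what keeps the fibres pseudoholomorphic with their nodal structure intact, and I must verify that Gromov compactness and positivity of intersections behave well against the singular fibres so that no limiting curve slips into the critical fibres or acquires a fibre component through a node. Once that is controlled, continuity of $p^{-1}$ is the routine consequence of regularity: the implicit function theorem applied to the transverse, zero-dimensional moduli problem shows that the unique solution varies continuously (indeed smoothly) with $J$, so $p^{-1}$ is continuous and $p$ is a homeomorphism.
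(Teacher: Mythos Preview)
Your approach is correct and would work, but the paper takes a shorter and more direct route. For uniqueness, the paper simply invokes positivity of intersections: two distinct $J$-holomorphic curves in the class $[S_i]$ would have to intersect nonnegatively, contradicting $[S_i]^2=-1<0$. This bypasses your automatic-transversality plus cobordism-count argument entirely. For existence, rather than setting up a Gromov--Witten invariant and evaluating it at $J_0$, the paper appeals directly to the Gromov--McDuff existence theorem \cite[Lemma~3.1]{MD}, which guarantees a (possibly reducible) $J$-holomorphic representative of $[S_i]$ for every tamed $J$; the work then reduces to ruling out reducibility. That step is handled by observing that a reducible representative must have a component with $c_1\le 0$, which by positivity of intersections against the $J$-holomorphic fibres is forced to lie in a fibre and hence to be (a cover of) a singular fibre; this last possibility is excluded by citing \cite{OO}. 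So the ``main obstacle'' you correctly flagged---controlling components that slip into the nodal fibres---is exactly where the paper invokes an external reference rather than arguing from scratch.

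What your approach buys is self-containment and a clearer explanation of why $p^{-1}$ is continuous (via the implicit function theorem at regular solutions), a point the paper's proof leaves implicit. What the paper's approach buys is brevity: the negative-square uniqueness argument is one line, and the McDuff theorem packages the entire compactness/cobordism machinery you propose to unroll.
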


\begin{proof}
Notice that $p^{-1}(J)$ cannot contain more than one element, since $\S_i^2=S_i^2<0$ for all $i$.
On the other hand, for each $i$, the class $[S_i]$ must contain a possibly reducible $J$-holomorphic curve $\S_i$ by the
Gromov--McDuff Theorem \cite[Lemma~3.1]{MD}. If $\S_i$ has more than one irreducible component, then, since $c_1(X)[\S_i]=1$, 
at least one component of $\S_i$, say $\S_i^*$, must have $c_1(X)[\S_i^*]<1$. Since the Poincar\'e dual of $c_1(X)$ is represented 
by a fibre of $X$, which also is $J$-holomorphic, by positivity of intersections, $\S_i^*$ must be contained in a fibre.
It follows that $\S_i^*$ must coincide with (or be a multiple cover of) a singular fibre.
Such a possibility can be ruled out by arguing as in \cite[Prop.~4.1]{OO}.
Thus $\S_i$ is irreducible and, by the adjunction formula, is embedded.
By positivity of intersections again, $\S_i$ is a section. 
\end{proof}

\begin{proof}[Proof of Theorem \ref{sections-are-lines}]
We can connect $J$ from Lemma \ref{exist} with $J_0$ by a path using Lemma \ref{connected}, and 
$p^{-1}$, by lemma \ref{homeomorphism}, connects $\{S_1,\dots,S_i\}$ with some set of disjoint lines in $X$.
\end{proof}

\subsection{Isotopy of sections in $X\sm(S_1\cup\dots\cup S_d)$}

\begin{Thm}\label{sections-fixed}
Let $S_1,\dots,S_d$ be disjoint sections in $X$ and $S,S'$ be homologous sections in $X^*=X\sm (S_1\cup\dots\cup S_d)$.
Then there exists an isotopy  between $S$ and $S'$ in $X^*$.
\end{Thm}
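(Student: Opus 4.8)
The plan is to rerun the $J$-holomorphic straightening argument behind Theorem~\ref{sections-are-lines}, but in a \emph{relative} form in which the fixed sections $S_1,\dots,S_d$ are themselves kept $J$-holomorphic, so that positivity of intersections prevents the moving section from ever meeting them. Since $S$ and $S'$ are homologous in $X^*$, they are homologous in $X$; write $h=[S]=[S']\in H_2(X)$, so that $h^2=-1$, $h\cdot\T=1$ and $h\cdot[S_i]=0$ for $i=1,\dots,d$. First I would introduce the subspace $\J'\subset\J$ of those $J$ which, in addition to the defining conditions of $\J$, make each $S_i$ holomorphic. This $\J'$ is nonempty by the construction of Lemma~\ref{exist} applied to the disjoint family $S_1,\dots,S_d$, and it is path-connected by the argument of Lemma~\ref{connected}: over a point of $S_i$ the extra requirement that $TS_i$ be $J$-invariant cuts the already contractible space of admissible $J_x$ down to the product of the spaces of compatible complex structures on $T_xF$ and on $T_xS_i$, which is again contractible, so there is still no obstruction to joining two structures inside $\J'$.

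Next I would set up the relative analogue of Lemma~\ref{homeomorphism}. Let $\JS'$ be the space of pairs $(J,\Sigma)$ with $J\in\J'$ and $\Sigma$ a $J$-holomorphic curve with $[\Sigma]=h$, and let $p'\colon\JS'\to\J'$ be the projection $(J,\Sigma)\mapsto J$. Exactly as in Lemma~\ref{homeomorphism}, the negative square $h^2=-1$ shows that $p'^{-1}(J)$ has at most one element, while the Gromov--McDuff theorem~\cite{MD}, the exclusion of fibre components (using $c_1(X)\cdot h=1$ and positivity of intersections with a fibre), and the adjunction formula show that this element is an embedded $J$-holomorphic section. The new ingredient is disjointness from the fixed sections: since each $S_i$ is now $J$-holomorphic and $h\cdot[S_i]=0$ while $[S_i]^2=-1$, the curves $\Sigma$ and $S_i$ are distinct and positivity of intersections forces them to be disjoint, so $\Sigma\subset X^*$. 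Hence $p'$ is a homeomorphism onto $\J'$.

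Finally I would conclude as in the proof of Theorem~\ref{sections-are-lines}. Choose $J\in\J'$ making $S$ holomorphic and $J''\in\J'$ making $S'$ holomorphic (both exist by the relative form of Lemma~\ref{exist}); by the previous step $S=p'^{-1}(J)$ and $S'=p'^{-1}(J'')$ are the unique holomorphic sections in class $h$ over $J$ and $J''$. Joining $J$ to $J''$ by a path in the path-connected space $\J'$ and applying the continuous inverse $p'^{-1}$ produces a continuous family of $J_t$-holomorphic sections $\Sigma_t$ in class $h$, all contained in $X^*$, with $\Sigma_0=S$ and $\Sigma_1=S'$. Promoting this family to an ambient isotopy by the isotopy extension theorem yields the desired isotopy between $S$ and $S'$ inside $X^*$.

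I expect the main obstacle to be the relative homeomorphism step, specifically guaranteeing that the moving holomorphic section never collides with the fixed sections along the deformation. This is precisely what forcing the $S_i$ to stay $J$-holomorphic buys: it converts geometric disjointness into the numerical identity $h\cdot[S_i]=0$, to which positivity of intersections applies uniformly over all of $\J'$. A secondary and routine point is upgrading the continuous family of holomorphic curves delivered by $p'^{-1}$ to a genuine smooth ambient isotopy.
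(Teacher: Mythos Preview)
Your proposal is correct and follows essentially the same route as the paper's proof: restrict $\J$ to the subspace $\J(S_1,\dots,S_d)$ of almost complex structures making the fixed sections pseudoholomorphic, note that this subspace is still path-connected and that the projection from the corresponding moduli space of $J$-holomorphic sections in class $h$ is a homeomorphism, and then connect structures making $S$ and $S'$ holomorphic. Your write-up is in fact more explicit than the paper's on two points the paper leaves implicit---why contractibility of the pointwise space of structures survives the extra constraint along the $S_i$, and why positivity of intersections against the now-holomorphic $S_i$ keeps the moving section inside $X^*$.
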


\begin{proof}
The scheme of arguments is the same as in the proof of Theorem \ref{sections-are-lines}.
Namely, we consider the subset $\J(S_1,\dots,S_d)\subset\J$ formed by almost complex structures satisfying in addition to conditions (1) and (2)
the condition 

(3) The sections $S_1,\dots,S_d$ are pseudoholomorphic.

Path-connectedness of $\J(S_1,\dots,S_d)\subset\J$ is proved in precisely the same way as in Lemma \ref{connected}.
Then we similarly define 
 $\JS(S_1,\dots,S_d)$ to be the set of pairs $(J,\S)$, where $J$ is in $\J(S_1,\dots,S_d)$ and 
 $\S\subset X\sm(S_1\cup\dots\cup S_d)$ is a J-holomorphic section whose class $[\S]\in H_2(X)$ agrees with $[S]=[S']$.
The projection $p\colon\JS\to\J$, $(J,\S)\mapsto J$ is a homeomorphism for the same reasons as in Lemma \ref{homeomorphism}.
We then complete the proof of Theorem \ref{sections-fixed} using the existence of structures $J,J'\in\J(S_1,\dots,S_d)$ for which
sections $S$ and, respectively, $S'$ are pseudoholomorphic.
\end{proof}

\section{Proof of the main Theorems}
\subsection{Proof of Theorem \ref{universality} }
By \cite{M} (see also Section \ref{sec:elliptic}), $X$ and $X'$ as LFs can be assumed to be algebraic
(rational elliptic surfaces).
Using Theorem \ref{sections-are-lines}, we reduce the setting to the case of families of lines $S_1,\dots,S_m\subset X$ and $S_1',\dots,S_m'\subset X'$.
Blowing down these lines give del Pezzo surfaces $Y$ and $Y'$ of degree $m$ endowed with marked points resulting from blowing down.
The marked points cannot lie on any of the lines on $Y$ and $Y'$ because of the assumption that $X$ and $X'$ have only fishtail singular
fibres and hence can have no curves of self-intersection less than $-1$.
One can then blow down a further set of $9-m$ disjoint lines on
$Y$ and on $Y'$ to obtain a projective plane with configurations of points $Q=(q_1,\dots,q_9)$ and $Q'=(q_1',\dots,q_9')$, respectively
(excluding the case $Y\cong Y'\cong\CP1\times\CP1$).

For a $9$-tuple that gives an elliptic surface after blowing up, the first $8$ points can be an arbitrary
generic $8$-tuple (no $3$ points are collinear, no $6$ are conconic, and not all $8$ lie on a cubic with a node at one of them),
and the $9$th point is determined by the first $8$. Thus the set of such $9$-tuples is Zariski-open in $(\cp)^8$ and, in particular, is path-connected.
A path connecting the $9$-tuples $Q$ and $Q'$ induces an isomorphism of $X$ and $X'$ as LFs.
In the exceptional case $Y\cong Y'\cong\CP1\times\CP1$, the argument is analogous.

\subsection{Proof of Theorem \ref{Bijection}} 
Injectivity follows immediately from Theorem~\ref{sections-fixed}.

To prove surjectivity, we apply Theorem \ref{sections-are-lines}
to isotopically deform sections $S_i$ into some lines $L_i$, $i=1,\dots,d$, and then extend this isotopy to an ambient isotopy
of $X$ through the strong automorphisms (cf., \cite[Chapter~8, Theorem~1.3]{H}).
This results in a strong automorphism
$\Phi\:X\to X$, with $\Phi(S_i)=L_i$.
The given class $h$ is realized by some line $L\subset X$ by Proposition \ref{classes-of-sections}, which is disjoint from $L_i$ since $h\cdot S_i=L\cdot L_i=0$.
Then, $S=\Phi^{-1}(L)$ is a required section.
\qed

\subsection{Proof of Theorem \ref{factorization-general}}
Due to Theorem \ref{factorization-conjugacy}, it is enough to show that two sections $S$ and $S'$ in $X^*$ are isotopic if and only if
there exists a strong automorphism $\Phi\colon X\to X$ preserving sections $S_i$, $i=1,\dots,d$, and sending $S$ to $S'$.
First, an isotopy of $S$ and $S'$ can be extended to an ambient isotopy of $X$ through strong automorphisms supported 
in $X^*$ (in fact, in any neighbourhood of the trace of the isotopy between the sections). This isotopy results in a strong automorphism $\Phi$ as required.

In the opposite direction, the existence of such an automorphism $\Phi$ implies that $[S]=[S']\in H_2(X)$, by Proposition \ref{fiberwise-automorphism}.
Then, by Theorem \ref{sections-fixed}, $S$ and $S'$ are isotopic.
\qed

\subsection{Proof of Theorem \ref{Main}}
It follows from Theorems \ref{Bijection} and \ref{factorization-general} that there exist precisely $n_d$ 
($n_1=240$, $n_2=56$, $n_3=27$)
lifts of a given factorization of $(X;S_1,\dots, S_d)$.
Propositions \ref{factorization1+1}, \ref{factorization1+2} and \ref{factorization1+3}
provide the required number, $n_d$, of sections described by the monodromy factorizations
in the Tables \ref{tab:roots}, \ref{tab:d2-chain} and \ref{tab:d3-chain},
while
Propositions \ref{homol-distinct-d=1}, \ref{homol-distinct-d=2} and \ref{homol-distinct-d=3}
guarantee that 
these $n_d$ sections are homologically distinct; therefore, the specified lists of isotopy classes of sections are complete.
\qed

\subsection{Proof of Theorem \ref{Main-Th}}
Since del Pezzo surfaces $Y$ of a fixed degree $d=1,2,3$ are deformation equivalent, it is sufficient to consider one example obtained by contraction of
$d$ disjoint lines on an elliptic surface.

It follows from
Theorems \ref{sections-are-lines} and \ref{Main} that if we fix a set of disjoint lines
$L_1,\dots,L_d$ on an elliptic surface $X$ so that they represent the basic monodromy factorization
$(a^3b)^3$ for $d=1$, $(a_1a_2^2b_1)^3$ for $d=2$ and $(a_1a_2a_3b_1)^3$ for $d=3$,
then the set of $n_d$ lines in $X\sm(L_1\cup\dots\cup L_d)$ will be represented by the lifts
of the basic factorizations listed in the
Tables \ref{tab:roots}, \ref{tab:d2-chain} and \ref{tab:d3-chain}.
 Then contraction of the lines $L_1,\dots,L_d$ will produce a del Pezzo surface $Y$ whose lines $L\subset Y$
have monodromy factorizations from these tables.\
\qed

\section{Monodromy factorizations and their presentation by $E_8$-roots}\label{root-section}

\subsection{The graphs $E_8$, $E_7$  and $E_6$ formed by matching cycles}\label{2-roots}
Figure \ref{fig:root-graphs} shows subgraphs of the graphs in the figures accompanying Tables \ref{fig:E8-graph}, \ref{fig:E7-graph} and \ref{fig:E6-graph}
whose edges $p$ (other than the dashed ones) define systems of matching cycles $\V_p$ that have intersection schemes $E_8$, $E_7$ and $E_6$, respectively.
If we add the dashed edge to each graph, then the intersection schemes extend to $\til E_8$, $\til E_7$ and $\til E_6$.

\begin{figure}[h!]
\includegraphics[height=5cm]{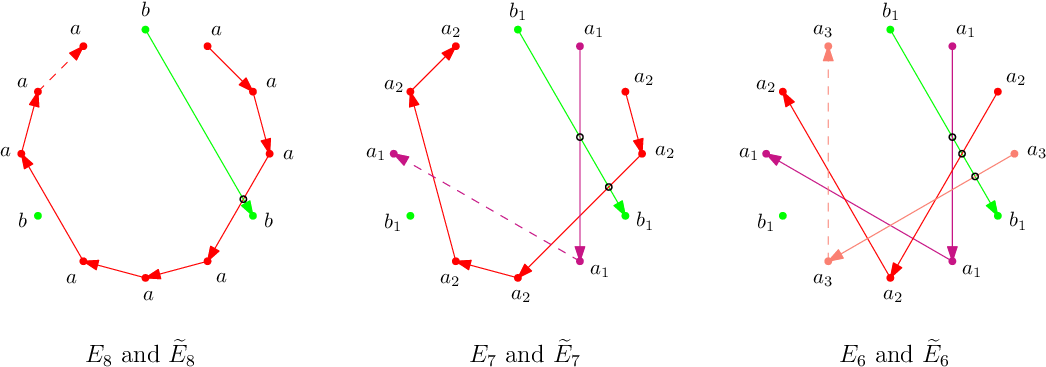}
\caption{\footnotesize Intersection graphs $E_8,E_7,E_6$ formed by matching cycles.}
\label{fig:root-graphs}
\end{figure}

In the case of $E_8$ (the first graph), the intersection scheme is evident.
For the other two graphs we need to notice that only the encircled intersections of edges give intersecting matching cycles 
(since vanishing 1-cycles over other intersection points can be made disjoint).

It is also apparent that $X$ splits into a union of a
plumbing $E_8$-manifold
(a regular neighbourhood, $N$, of the union of matching cycles $\V_p$ for the solid edges
of the leftmost graph), and the complementary {\it nucleus} plumbed from a neighbourhood of the section $S_1\subset X$
and a cusp-fibre neighbourhood.
The latter is a neighbourhood of the
union of a nonsingular fibre $\T$ and a pair of Lefschetz thimbles $D_1,D_2$
attached to $\T$ so that the vanishing 1-cycles $\del D_1,\del D_2\subset\T$ intersect in one point.
In Figure \ref{fig:root-graphs}, such discs $D_i$ are associated to the pair of singular values corresponding to the isolated
vertices of the $E_8$-graph.

It follows that the matching cycles over the edges of this $E_8$-graph 
represent basic roots of
$E_{8}=H_2(N)=\la \T,S_1\ra^\perp\subset H_2(X).$
With the edges directed as in Figure \ref{fig:root-graphs}, the matching cycles will to intersect each other positively with respect to the induced orientations.

\subsection{Coordinates in $E_8$ and $\til E_8$, and their duals}
In the $E_8$-lattice, we will enumerate a set $\{e_0,\dots,e_7\}$ of basic roots 
so that their product is $1$ or $0$ according to whether they are adjacent or not in the
 $E_8$-diagram \ \ $\begin{matrix}e_1e_2e_3e_4e_5e_6e_7\\ e_0\phantom{AA}\end{matrix}$. 
We will also arrange 
the coordinates $y,x_i\in\Z$ of an element $v=y e_0+x_1e_1+\dots+x_7e_7\in E_8$ in the form of a matrix-diagram
$[v]_{E_8}=\begin{matrix}{x_1x_2}x_3{x_4x_5}{x_6x_7}\\ y\phantom{\aa\aa\aa}\end{matrix}$.

Elements $\ell\in E_8^*=\Hom(E_8,\Z)$ of the dual lattice will be presented similarly as matrices
$[\ell]_{E_8^*}=\begin{matrix}{x_1x_2}x_3{x_4x_5}{x_6x_7}\\ y\phantom{\aa\aa\aa}\end{matrix}$, where
$\ell(e_0)=y$ and $\ell(e_i)=x_i,1\leq i\leq 7$.

Let $\Psi\colon E_8\to E_8^*$ denote the
duality isomorpism, where $\Psi(v)$ sends $w\in E_8$ to $v\cdot w$.
The following Proposition tells how the root $e=[S_2]-[S_1]-[\T]$ (see Corollary \ref{sec-roots}) can be determined from a monodromy factorization 
in Table \ref{tab:d=1}.

\begin{Prop}\label{E8*-coefficients}
Assume that $S_1,S_2\subset X$ are disjoint  sections, and $\D a=\aa_1,\dots,\aa_9$, $\D b=\bb_1,\bb_2,\bb_3$ are the $\D$-sequences determined by
the monodromy factorization of $S_2$ in $X\sm S_1$ as described in Section \ref{2-cycles}.
Then the dual $\Psi(e)\in E_8^*$ of the root  
$$e=[S_2]-[S_1]-[\T]\in E_8=\la \T,S_1\ra^\perp\subset H_2(X)$$
has coordinates
$[\Psi(e)]_{E_8^*}=\begin{matrix}{\aa_1\aa_2}\aa_3{\aa_4\aa_5}{\aa_6\aa_7}\\ \bb_3\phantom{\aa\aa\aa}\end{matrix}.$
\end{Prop}

\begin{proof} This follows in a straightforward manner
from Corollary \ref{sec-roots} and Lemma \ref{V-intersections}.
\end{proof}

It is apparent that $\Psi(e)$ just expresses the image of the class $[S_2]^*\in H^2(X)$, Poincar\'e dual to $[S_2]$, under the pull-back homomorphism
$H^2(X)\to H^2(N)=E_8^*$. 
Let us extend the $E_8$-plumbing manifold $N$ to an $\til E_8$-plumbing $N'$ by adding a neighbourhood of the matching cycle $\V_p$ for the dashed edge $p$.
Clearly, the basic roots of $H_2(N')=\til E_8$ are obtained from the roots $e_0,\dots,e_7\in H_2(N)=E_8$ by adding
the new cycle $\V_p\subset N'$, which we will denote by $e_8$.
The same convention about presentation
of elements $v\in\til E_8$ and $\ell\in\til E_8^*$ as coordinate matrices $[v]_{\til E_8}$ and $[\ell]_{\til E_8^*}$ will be used.
Then the pull-back of $[S_2]^*$ under the homomorphism $H^2(X)\to H^2(N')=\til E_8^*$ can be described in a similar way to Proposition \ref{E8*-coefficients}.

\begin{Prop}\label{extendedE8*-coefficients}
Under the assumptions of Proposition \ref{E8*-coefficients}, the class $\ell\in H^2(N')=\til E_8^*$ defined by $h\mapsto h\cdot[S_2]$, for $h\in H_2(N')$,
 has $E_8^*$-coordinates
 $[\ell]_{\til E_8^*}=\begin{matrix}{\aa_1\aa_2}\aa_3{\aa_4\aa_5}{\aa_6\aa_7\aa_8}\\ \bb_3\phantom{\aa\aa\aa xx}\end{matrix}$.
\qed\end{Prop}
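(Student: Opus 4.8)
The plan is to reduce this statement almost entirely to Proposition \ref{E8*-coefficients}, since the only genuinely new datum is the single extra coordinate $\ell(e_8)$. First I would record that $\ell$ is, by definition, the image of the Poincar\'e dual class $[S_2]^*$ under the restriction (pull-back) homomorphism $H^2(X)\to H^2(N')=\til E_8^*$, exactly as in the remark following Proposition \ref{E8*-coefficients}; concretely $\ell(h)=h\cdot[S_2]$ for every $h\in H_2(N')$, so each basic-root coordinate is the intersection number $\ell(e_i)=e_i\cdot[S_2]$, $0\le i\le 8$.

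Next I would verify that all nine basic roots $e_0,\dots,e_8$ lie in $\langle\T,S_1\rangle^\perp\subset H_2(X)$. Each $e_i$ is represented by a matching cycle $\V_p$, which lies in $X\sm S_1$ and hence satisfies $e_i\cdot[S_1]=0$; moreover $\V_p$ sits over an arc $p\subset S^2$ and is therefore disjoint from a generic fibre, giving $e_i\cdot[\T]=0$. Consequently, writing $e=[S_2]-[S_1]-[\T]$ as in Corollary \ref{sec-roots}, we obtain $e_i\cdot[S_2]=e_i\cdot e$ for every $i$. For $i=0,\dots,7$ this is precisely the pairing already evaluated in Proposition \ref{E8*-coefficients}, so these coordinates are $\bb_3$ (for $e_0$) and $\aa_1,\dots,\aa_7$ (for $e_1,\dots,e_7$); they are unchanged under the inclusion $N\hookrightarrow N'$ since the $e_i$ for $i\le 7$ already lie in $H_2(N)$.

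It then remains only to identify the new coordinate $\ell(e_8)$. The root $e_8$ is the matching class $[\V_{p_8}]$ over the dashed edge, which is the eighth edge $p_8$ of the $a$-sequence path in the figure accompanying Table \ref{tab:roots} and hence represents the eighth term $\aa_8$ of the $\D a$-sequence. By the interpretation of $\D$-sequences as intersection indices in Lemma \ref{V-intersections}, $\ell(e_8)=S_2\circ\V_{p_8}=\aa_8$. Assembling the coordinates into the matrix-diagram then yields the asserted form of $[\ell]_{\til E_8^*}$. The only step requiring care, and the natural place for a sign error to enter, is matching the orientation of $e_8=[\V_{p_8}]$, fixed by directing the dashed edge as in Figure \ref{fig:root-graphs}, against the orientation convention built into Lemma \ref{V-intersections}; once these agree the identification $\ell(e_8)=\aa_8$ is forced and no further computation is needed.
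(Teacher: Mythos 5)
Your proposal is correct and follows the same route the paper intends: the paper states this proposition with only a \qed, remarking that it is obtained ``in a similar way to Proposition \ref{E8*-coefficients}'', i.e., from Corollary \ref{sec-roots} and Lemma \ref{V-intersections}, which is exactly your reduction. Your explicit verification that the basic roots $e_0,\dots,e_8$ lie in $\langle\T,S_1\rangle^\perp$ (so that $\ell(e_i)=e_i\cdot[S_2]=e_i\cdot e$) and your identification of the single new coordinate $\ell(e_8)=S_2\circ\V_{p_8}=\aa_8$ via Lemma \ref{V-intersections} supply precisely the details the paper leaves implicit.
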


\subsection{Passing from $\til E_8$- to $E_8$-coordinates}
Let $\imath^*\colon\til E_8^*=H^2(N')\to H^2(N)= E_8^*$ be the map induced by the inclusion $\imath\colon N\subset N'$, and 
$\til\Psi\colon\til E_8\to\til E_8^*$
be the duality homomorphism, that is, $\til\Psi(v)$ sends $h\in \til E_8$ to $v\cdot h$.

\begin{Prop}\label{E8-E8*}
Under the assumptions of Proposition \ref{E8*-coefficients}, let $v\in\til E_8$ be an element with coordinates
$[v]_{\til E_8}=\begin{matrix}{x_1x_2}x_3{x_4x_5}{x_6x_7x_8}\\ y\phantom{\aa\aa XX}\end{matrix}$.
Then the image of $v$ 
under the homomorphism
$$\begin{CD}
\til E_8@>\til\Psi>>\til E_8^*@>\imath^*>> E_8^*@>\Psi^{-1}>> E_8
\end{CD} $$
has coordinates 
$\begin{matrix}{x_1x_2}x_3{x_4x_5}{x_6x_7}\\ y\phantom{\aa\aa\aa}\end{matrix}-x_8 \begin{matrix}{2465432}\\ 3\phantom{aa}\end{matrix}.
$
\end{Prop}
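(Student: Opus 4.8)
The first step is to unwind the composite $\Phi:=\Psi^{-1}\circ\imath^*\circ\til\Psi$ into a single intersection-theoretic characterization. Since $E_8$ is unimodular, $\Psi\colon E_8\to E_8^*$ is an isomorphism, so $\Psi^{-1}$ is defined. By definition $\til\Psi(v)$ is the functional $h\mapsto v\cdot h$ on $\til E_8$; the map $\imath^*$ restricts this functional to the sublattice $E_8=\la e_0,\dots,e_7\ra\subset\til E_8$; and $\Psi^{-1}$ returns the unique vector representing it. Hence $\Phi(v)$ is characterized as the unique $u\in E_8$ with
\[
u\cdot h=v\cdot h\qquad\text{for every }h\in E_8\subset\til E_8 .
\]
I would record this reformulation first, since it converts the whole statement into a computation of intersection numbers with the basic roots $e_0,\dots,e_7$.

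Next I would exploit linearity. Writing $v=v'+x_8 e_8$ with $v'=y e_0+\sum_{i=1}^7 x_i e_i\in E_8$, the characterization immediately gives $\Phi(v')=v'$ (as $v'$ already lies in $E_8$), so it remains only to compute $\Phi(e_8)$, i.e.\ the unique $u\in E_8$ with $u\cdot h=e_8\cdot h$ for all $h\in E_8$. Everything then reduces to understanding how the extra matching cycle $e_8=\V_p$ (the dashed edge) pairs with the basic roots.

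The key input is the affine relation. The intersection form on $H_2(N')=\til E_8$ is the negative of the affine Cartan matrix: every matching cycle satisfies $\V_p^2=-2$, adjacent cycles meet in $+1$, and the adjacencies are exactly those of the $\til E_8$ graph in Figure \ref{fig:root-graphs}. This form is degenerate, with radical spanned by the null root $\delta=e_8+w$, where $w\in E_8$ is the highest root, whose coordinates are precisely $[w]_{E_8}=\begin{matrix}2465432\\ 3\phantom{\aa\aa\aa}\end{matrix}$. Concretely, $\delta\cdot e_j=0$ for every basic root $e_j$ (this is the standard affine $E_8$ relation, read directly off the plumbing graph), so $e_8\cdot h=-w\cdot h$ for all $h\in E_8$. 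Since $-w\in E_8$ and the $E_8$ form is nondegenerate, the characterization forces $\Phi(e_8)=-w$. Combining, $\Phi(v)=v'-x_8\,w$, which is exactly the asserted coordinate identity $\begin{matrix}{x_1x_2}x_3{x_4x_5}{x_6x_7}\\ y\phantom{\aa\aa\aa}\end{matrix}-x_8\begin{matrix}2465432\\ 3\phantom{\aa\aa}\end{matrix}$.

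The only substantive point, and the place I expect the argument to need care, is the identification of $H_2(N')$ with the affine $\til E_8$ lattice together with the verification that its radical is generated by $\delta=e_8+w$; once one knows that $w$ with the marks $(3;2,4,6,5,4,3,2)$ is the highest root and that the dashed edge attaches $e_8$ to the appropriate end of the $E_8$ graph, the relation $\delta\cdot e_j=0$ is a routine check on the plumbing diagram. All the rest is formal and rests on the unimodularity of $E_8$ already established in the excerpt.
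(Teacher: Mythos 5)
Your proof is correct and follows essentially the same route as the paper: the paper's entire proof is the one-line observation that the kernel (radical) of $\til\Psi$ is spanned by the affine null vector with coordinates $(3;2,4,6,5,4,3,2,1)$, i.e.\ $\delta=e_8+w$ with $w$ the highest root, which is exactly the key fact you isolate. Your write-up merely makes explicit the formal reductions (the characterization of $\Psi^{-1}\circ\imath^*\circ\til\Psi$ via intersection numbers, linearity, and $\Phi(e_8)=-w$) that the paper leaves implicit.
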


\begin{proof}
This follows immediately from the well-known (and trivial) fact that
the kernel of $\til\Psi$ is spanned by 
$\begin{matrix}{24654321}\\ 3\phantom{aaa}\end{matrix}$.
\end{proof}

\subsection{Root vectors in $E_8$ for monodromy factorizations}\label{root-variations}
After we fix a section $S_1\subset X$, the root-class $e=[S]-[S_1]-[\T]\in E_8=\la S_1,\T\ra^\perp$
for any other section $S\subset X\sm S_1$ (see Corollary \ref{sec-roots}) can be described in terms of the
monodromy factorization of $X$, as shown in Table  \ref{tab:permutation}.
Namely, this Table comprises 120 roots $e$ for $S$ having
sample factorizations of Types  1*--\,6* (together with their variations),
while the other 120 roots represented by factorizations of Types 1--6 correspond to their negatives, that is, $-e$, since Types 1--6 are obtained by reversion of Types 1*--\,6*
(see Section \ref{symmetries}).

To explain the structure of Table \ref{tab:permutation}, consider its first row representing the 27 factorizations of {Type 1*}.
After the $a$-indices and $\D a$-indices of the sample factorization $(a_1a_2^2b_1)^3$, we indicate coordinates
$[\ell]_{\til E_8^*}$ of the image $\ell\in H^2(N')=\til E_8^*$
of $[S]^*\in H^2(X)$ found by Proposition \ref{extendedE8*-coefficients}.

\begin{table}[h!]
\caption{Roots $e\in E_8$ representing
monodromy factorization of sections $S\subset X\sm S_1$}\label{tab:permutation}
\scalebox{0.7}{\begin{tabular}{cccccc}
\hline
 Type &$a$-indices& $\Delta a$-sequence& $[\ell]_{\til E_8^*}$&$[v]_{\til E_8}$& variations of the $[v]_{\til E_8}$-fragments\\
\hline
1* & $211211211$ & 10-110-110-1&$\begin{matrix}\text{10-110-110}  \\  \text{0}\phantom{aaa}\end{matrix}$&
$\begin{matrix}\underline{01}\,2\,\underline{11}\,\underline{100}\\ 1\phantom{aaa}\end{matrix}$&
$\underline{01}\to\begin{cases} 11\\12\end{cases}$\ \ $\underline{11}\to\begin{cases} 21\\22\end{cases}$\ \ $\underline{100}\to\begin{cases} 110\\111\end{cases}$
\\
\hline
2* & $221211111$ & 01-110000-1 &$\begin{matrix}\text{01-110000}  \\  \text{-1}\phantom{aaa}\end{matrix}$&
 $\begin{matrix}\underline{00}1\underline{00}000\\ 1\phantom{AA}\end{matrix}$&
\hskip-3mm$\underline{00}\to\begin{cases} 01\\11\end{cases}$\ \ $\underline{00}\to\begin{cases} 10\\11\end{cases}$  \\
2* & $211111221$ & 10000-101-1 & $\begin{matrix}\text{10000-101}  \\  \text{0}\phantom{aaa\,}\end{matrix}$&
$\begin{matrix}\underline{01}222\underline{210}\\ 1\phantom{AA}\end{matrix}$  &
$\underline{01}\to\begin{cases} 11\\12\end{cases}$\ \ $\underline{210}\to\begin{cases} 211\\221\end{cases}$ \\
2* & $111221211$ & 00-101-1100 & $\begin{matrix}\text{00-101-110}  \\  \text{1}\phantom{aaa\,}\end{matrix}$&
$\begin{matrix}123\underline{21}\,\underline{100}\\ 1\phantom{AA}\end{matrix}$  &
$\underline{21}\to\begin{cases} 22\\32\end{cases}$\ \ $\underline{100}\to\begin{cases} 110\\111\end{cases}$ \\
\hline
3* & $211221111$ & 10-101000-1 & $\begin{matrix}\text{10-101000}  \\  \text{0}\phantom{aaa}\end{matrix}$&
 $\begin{matrix}\underline{01}2\underline{10}000\\ 1\phantom{aaa}\end{matrix}$  &
\hskip-3mm$\underline{01}\to\begin{cases} 11\\12\end{cases}$\ \ $\underline{10}\to\begin{cases} 11\\21\end{cases}$\\
3* & $221111211$ & 01000-110-1 & $\begin{matrix}\text{01000-110}  \\  \text{-1}\phantom{aAA}\end{matrix}$&
$\begin{matrix}\underline{00}111\underline{100}\\ 1\phantom{aaa}\end{matrix}$ &
$\underline{00}\to\begin{cases} 01\\11\end{cases}$\ \ $\underline{100}\to\begin{cases} 110\\111\end{cases}$\\
3* & $111211221$ &  00-110-1010 &$\begin{matrix}\text{00-110-101}  \\  \text{1}\phantom{aaa}\end{matrix}$&
 $\begin{matrix}123\underline{22}\,\underline{210}\\ 1\phantom{aaA}\end{matrix}$& 
$\underline{22}\to\begin{cases} 32\\33\end{cases}$\ \ $\underline{210}\to\begin{cases} 221\\211\end{cases}$\\
\hline
4* & $222111111$ & 00100000-1 & $\begin{matrix}\text{00100000}  \\  \text{-2}\phantom{aaaa}\end{matrix}$&
 $\begin{matrix}00000000\\ 1\phantom{AA}\end{matrix}$ & no $a$-block variations \\
4* & $111111222$ & 00000-1001 & $\begin{matrix}\text{00000{-1}00}  \\  \text{1}\phantom{aaa}\end{matrix}$&
 \text{--}\ $\begin{matrix}12321000\\ 2\phantom{AA}\end{matrix}$ & no $a$-block variations \\
4* & $111222111$ & 00-1001000 & $\begin{matrix}\text{00-100100}  \\  \text{1}\phantom{aaa}\end{matrix}$&
 $\begin{matrix}12321000\\ 1\phantom{AA}\end{matrix}$  & no $a$-block variations \\
\hline
5* & $222122322$ & 001-10-1100 &$\begin{matrix}\text{001{-1}0{-1}10}  \\  0\phantom{aaaa}\end{matrix}$&
 $\begin{matrix}000\underline{11}\,\underline{100}\\ 0\phantom{AA}\end{matrix}$   &
$\underline{11}\to\begin{cases} 00\\01\end{cases}$\ \ $\underline{100}\to\begin{cases} 110\\111\end{cases}$\\
5* & $122322222$ & -10-1100001 & $\begin{matrix}\text{-10-110000}  \\  \text{1}\phantom{AA}\end{matrix}$&
 $\begin{matrix}\underline{11}1\underline{00}000\\ 0\phantom{aaa}\end{matrix}$  &
\hskip-2mm$\underline{11}\to\begin{cases} 00\\01\end{cases}$\ \ $\underline{00}\to\begin{cases} 10\\11\end{cases}$\\
5* & $322222122$ & 100001-10-1 & $\begin{matrix}\text{100001-10}  \\  \text{-1}\phantom{AAa}\end{matrix}$&
\text{--}\  $\begin{matrix}\underline{11}111\underline{100}\\ 0\phantom{AA}\end{matrix}$  &
$\underline{11}\to\begin{cases} 00\\01\end{cases}$\ \ $\underline{100}\to\begin{cases} 110\\111\end{cases}$\\
\hline
6* & $222222123$ & 000001-1-11 & $\begin{matrix}\text{000001-1-1}  \\  \text{0}\phantom{AAA}\end{matrix}$&
 $\begin{matrix}00000\underline{011}\\ 0\phantom{AA}\end{matrix}$  &
$\underline{011}\to\begin{cases} 001\\010\end{cases}$\\
6* & $222123222$ & 001-1-11000 & $\begin{matrix}\text{001-1-1100}  \\  \text{0}\phantom{aaaa}\end{matrix}$&
$\begin{matrix}000\underline{11}000\\ 0\phantom{AA}\end{matrix}$   &
$\underline{11}\to\begin{cases} 01\\10\end{cases}$\\
6* & $123222222$ & -1-11000001 & $\begin{matrix}\text{-1-1100000}  \\  \text{0}\phantom{A}\end{matrix}$&
 $\begin{matrix}\underline{11}000000\\ 0\phantom{AA}\end{matrix}$  &
$\underline{11}\to\begin{cases} 01\\10\end{cases}$\\
\hline
\end{tabular}}
\end{table}
Next, we find coordinates $[v]_{\til E_8}=\begin{matrix}\underline{01}2\underline{11}\,\underline{100}\\ 1\phantom{aaa}\end{matrix}$
of a vector $v\in \til\Psi^{-1}(\ell)$.
By Proposition \ref{E8-E8*}, the coordinates $[e]_{E_8}=\begin{matrix}{01}2{11}{10}\\ 1\phantom{aa}\end{matrix}$
of the required root $e\in E_8$ are obtained in this case 
by just dropping the last zero coefficient of $[v]_{\til E_8}$.

The last column indicates how the coordinate vector $[v]_{\til E_8}$ changes under block permutations of the given sample factorization.
We underlined three fragments, "$01$", "$11$" and "$100$" in $[v]_{\til E_8}$, which correspond to the three consecutive
 $a$-blocks, $a_2a_1^2$. If the first $a$-block is permuted to
 $a_1a_2a_1$ or $a_1^2a_2$, then the corresponding fragment  
$01$ is replaced by $11$ or $12$, respectively.
Similarly, changing the second $a$-block leads to the replacement of the fragment $11$ by $21$ or $22$, respectively,
and changing the third $a$-block replaces the fragment $100$ by $110$ or $111$.

Since all three $a$-blocks for the Type 1* sample factorization are the same, shift-variations have no effect.
On the contrary, shift-variations for Types 2*--\,6* sample factorizations lead to non-trivial cyclic permutations of the $a$-indices, and 
each of these types occupy three rows in Table \ref{tab:permutation} correspondingly.

\subsection{The lattice vectors representing factorizations in Table \ref{tab:d=2} and \ref{tab:d=3}}
The same method as was described for Table \ref{tab:d=1} can be applied for Tables \ref{tab:d=2} and \ref{tab:d=3} to describe
the corresponding vectors  in $E_7^*$ and $E_6^*$.
For instance, with reference to the $E_7$-diagram of edges in Figure \ref{fig:root-graphs} and applying 
Lemma \ref{VpS-d=2},
 the $\D$-sequences for the Type 1 sample factorization in Table \ref{tab:d=2} give
the $E_7^*$-vector 
$\begin{matrix}\text{001-11-1}\\ \text{-1}\phantom{aaa}\end{matrix}$.
Here the chain -11-11-1 comes from $\D a_2$, one zero from $\D b$ and the other from $\D a_1$.
The dual vector 
is $\displaystyle \frac{v}2$, where  $v=\begin{matrix}\text{000101}\\ \text{1}\phantom{\ }\end{matrix}\in E_7$.

Similarly, the Type 1 sample factorization in Table \ref{tab:d=3} leads to the
$E_6^*$-vector 
$\begin{matrix}\text{0000-1}\\ \text{0}\phantom{\ }\end{matrix}$, whose dual is
$\displaystyle \frac{v}3$, where $v=\begin{matrix}\text{24654}\\ \text{3}\end{matrix}\in E_6$.

\section{Concluding remarks on the Hurwitz moves' action}\label{concluding}

\subsection{The action of Hurwitz moves (half-twists) on monodromy factorizations}
The monodromy factorizations in each of the Tables \ref{tab:d=1}--\ref{tab:d=3} are related to each other via Hurwitz moves.
We explain how this works in the case of Table \ref{tab:d=1}.
Given an arc $p$ connecting a pair of points in $\D\subset S^2$, we 
denote by $\s_p\in\Mod(S^2,\D)$ the mapping class of the {\it half-twist} along the arc $p$ (for terminology see \cite[Chapter 9]{FM}).

\begin{Prop}\label{covering-Dehn}
Assume that $f\colon X\to S^2$ is an LF of type $E(1)$, $S_0\subset X$ a section,
 $p\subset S^2$ a matching arc and 
$\phi\colon (S^2,\D)\to(S^2,\D)$ a diffeomorphism representing $\s_p$.
Then 

(1) $\phi$ is covered by an automorphism $F\colon X\to X$ preserving $S_0$, 

(2)
the induced map $F_*\colon H_2(X)\to H_2(X)$ is the Picard-Lefschetz transformation
$$\quad\rho_e(x)=x+(e\cdot x) e,$$ where $e=[\V_p]$.
\end{Prop}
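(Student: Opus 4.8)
The plan is to prove both parts by producing $F$ explicitly from the matching data and then reading off its homological action from its support. For part (1), I would first choose the system of arcs $\{A_i\}$ so that the two critical values $\d_0,\d_1$ joined by $p$ are consecutive in the counter-clockwise order and $p$ is the short arc between them; then the associated vanishing cycles $c_0,c_1\subset\T\sm\{q_0\}$ (with $q_0=S_0\cap\T$) occupy an adjacent pair of factors $t_{c_0}t_{c_1}$ in the monodromy factorization in $\PMod(\T_1)$. A diffeomorphism representing $\s_p$ acts on this factorization by the single Hurwitz move $(t_{c_0},t_{c_1})\mapsto(t_{c_1},t_{c_1}^{-1}t_{c_0}t_{c_1})$. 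The matching hypothesis for $p$ relative to $S_0$ gives that $c_0$ and $c_1$ are isotopic in $\T\sm\{q_0\}$, so $t_{c_0}=t_{c_1}$ in $\PMod(\T_1)$ and this Hurwitz move is the identity. Hence the section-level factorization is unchanged, and the LF-monodromy correspondence for fibrations with a section (the version of (\ref{mfc}) recorded after its statement) produces an LF-automorphism $F$ inducing $\s_p$ with $F(S_0)=S_0$. Moreover, since the Hurwitz move is localized over a disk $B\supset p$, I would take $F$ to equal the identity over $W=f^{-1}(S^2\sm B)$ and to realize the standard lift of the half-twist in the local $A_1$-model over $V=f^{-1}(B)$; this gives part (1).

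For part (2) I would exploit that $F$ is supported in $V=f^{-1}(B)$, which is a regular neighbourhood of the $(-2)$-sphere $\V_p$ (an $A_1$-piece with $H_2(V)=\la e\ra$, $e^2=-2$, and $\del V$ a rational homology sphere). Any class represented by a cycle in $W$ is then fixed by $F_*$. A Mayer--Vietoris computation for $X=W\cup V$ shows that $H_2(W)\to H_2(X)$ is injective modulo torsion with image of corank one; since every class in this image is carried by a cycle disjoint from $\V_p$, the image lies in $e^\perp$, and by the rank count it equals $e^\perp$. As $H_2(X)$ is torsion-free (here $X\cong\cp\#9\cpb$), I conclude that $F_*$ fixes $e^\perp$ pointwise.

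It remains to evaluate $F_*(e)$. Because $F_*$ is an isometry fixing $e^\perp$, the class $F_*(e)$ is orthogonal to $e^\perp$ and has square $-2$, so $F_*(e)=\pm e$ by primitivity. To fix the sign I would track the orientation convention $\V_p=D_{p_0}-D_{p_1}$ fixed in the preliminaries: the half-twist $\s_p$ preserves $p$ as a set but reverses its direction and swaps the two halves, so $F$ interchanges $D_{p_0}$ and $D_{p_1}$ and hence $F_*[\V_p]=[D_{p_1}-D_{p_0}]=-[\V_p]$, giving $F_*(e)=-e$. Since $e^2\ne0$ forces $H_2(X)\otimes\bfq=e^\perp_\bfq\oplus\la e\ra_\bfq$, the two conditions $F_*|_{e^\perp}=\mathrm{id}$ and $F_*(e)=-e$ determine $F_*$ uniquely; these are exactly the defining properties of $\rho_e(x)=x+(e\cdot x)e$ (note $\rho_e(e)=e+(e\cdot e)e=-e$ as $e^2=-2$, while $\rho_e$ fixes $e^\perp$). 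Hence $F_*=\rho_e$.

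The main obstacle I anticipate is the orientation bookkeeping needed to secure $F_*(e)=-e$ rather than $+e$: this requires care about how the chosen lift of $\s_p$ acts on the common boundary circle $\del D_{p_0}=\del D_{p_1}$, and matching this against the orientation convention for $\V_p$. A secondary point to verify carefully is that the localized realization of the trivial Hurwitz move can be taken to be the identity over $W=f^{-1}(S^2\sm B)$ while still preserving $S_0$, since it is precisely this support property that makes the clean Mayer--Vietoris identification of $e^\perp$ available and thereby reduces part (2) to the two reflection conditions above.
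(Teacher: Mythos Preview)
Your proposal is correct and very close in spirit to the paper's proof. For part (1), both you and the paper argue that the half-twist along the matching arc leaves the monodromy unchanged (you phrase this as a trivial Hurwitz move since $t_{c_0}=t_{c_1}$ in $\PMod(\T_1)$, the paper simply says the monodromy homomorphism is unchanged), and both then localize $F$ over a disc neighbourhood of $p$.

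For part (2), the paper takes one further step that you do not: having localized $F$ to a neighbourhood of the $(-2)$-sphere $\V_p$ and observed that $F$ reverses the orientation of $\V_p$, it identifies $F$ itself (not just $F_*$) as the four-dimensional Dehn twist along $\V_p$, and then invokes the standard fact that such a Dehn twist acts on $H_2$ by the Picard--Lefschetz reflection. Your route instead bypasses this geometric identification and computes $F_*$ directly: Mayer--Vietoris gives $F_*|_{e^\perp}=\mathrm{id}$ (your phrase ``the image equals $e^\perp$'' should strictly read ``has finite index in $e^\perp$'', but you correctly recover the full statement via torsion-freeness), and the thimble-swap gives $F_*(e)=-e$. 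Your argument is more self-contained, avoiding any appeal to the model Dehn twist; the paper's argument is shorter and yields the stronger conclusion that $F$ is the Dehn twist as a diffeomorphism, not merely that it has the same action on homology. Your flagged concern about the orientation bookkeeping for $F_*(e)=-e$ is exactly the point the paper also relies on (``$F$ reverses the orientation of $\V_p$''), so it is not a gap peculiar to your approach.
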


\begin{proof}
A half-twist along a matching arc $p$ does not change the monodromy representation {$\pi_1(S^2\sm\D,\b)\to\PMod(\T_1)$}. This implies 
the existence of the automorphism $F\colon X\to X$ in part (1).

Moreover, $F$ may be localized along the $(-2)$-sphere $\V_p$
(i.e., chosen to be the identity outside a small neighbourhood of $\V_p$), which implies that $F$ has to be a Dehn twist
along $\V_p$, since it reverses the orientation of $\V_p$. {(See, e.g., \cite{S} for details of Dehn twists along $(-2)$-spheres.)}
In fact, the localization of $F$ can be seen as an automorphism of an LF projecting a neighbourhood of $\V_p$
over a disc-neighbourhood of the arc $p$, with two critical points at $\del p$. 
Such an automorphism is known to be a Dehn twist along the $(-2)$-sphere
covering a half-twist  along $p$.
\end{proof}

This gives the following method to relate any pair of the $240$ monodromy factorizations indicated in Table \ref{tab:d=1}
in terms of the corresponding roots  $e,e'\in E_8$.
Assume first that  $e$ and $e'$ are neighbours in the Hasse diagram of $E_8$-roots, namely,
$e'=e+ e_i$,  $e\cdot e_i=1$, where $e_i\in E_8$ is one of the basic roots represented by some matching cycle $p_i$ in Figure \ref{fig:root-graphs}. 
Then $\rho_{e_i}(e)=e'$ and, by Proposition \ref{covering-Dehn},
the factorizations represented by $e'$ and by $e$ are related by
a half-twist along $p_i$.
 In general, roots $e$ and $e'$ are related by adding several basic roots $e_i$ as above, and we need to use several
 half-twists to relate the factorizations associated with $e$ and $e'$.

For example, the factorizations $(a_2a_1^2b_1)^3$ of Type 1* and $a_2a_1a_2b_2a_1^2a_2b_2a_1^3b_1$ of Type 2* are represented
by the $E_8$-roots $e'=\begin{matrix}0121110\\ 1\phantom{aa}\end{matrix}$ and
$e=\begin{matrix}0111100\\ 1\phantom{aa}\end{matrix}$, respectively (see Table \ref{tab:permutation}).
The root $e'$ is obtained from $e$
 by adding two basic roots, 
 $e_3=\begin{matrix}0010000\\ 0\phantom{aa}\end{matrix}$ and 
 $e_6=\begin{matrix}0000010\\ 0\phantom{aa}\end{matrix}$. So the pair of half-twists corresponding 
to $e_3$ and $e_6$ transform any system of arcs representing one of the given factorizations into one representing the other;
see Figure~\ref{fig:Hurwitz-moves} (b).
From an algebraic viewpoint, this pair of half-twists look like a pair of substitutions
$a_1b_1a_2=a_2b_2a_1$ relating the given factorizations.

\begin{figure}[h!]
\includegraphics[height=8.5cm]{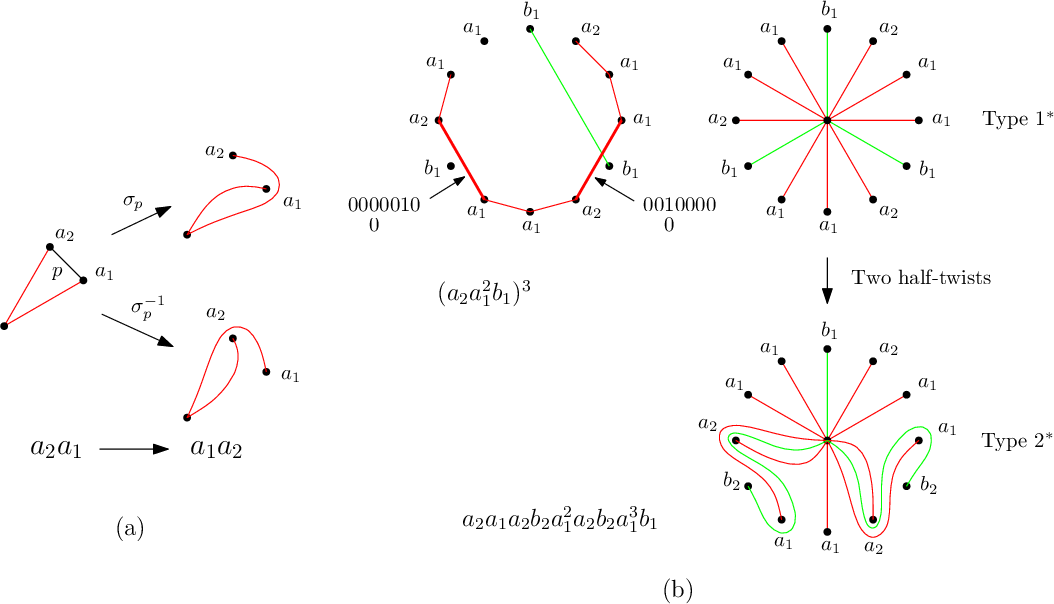}
\caption{Action of half-twists $\s_p$ on the monodromy factorizations: (a) block-variation, and (b) two substitutions.}
\label{fig:Hurwitz-moves}
\end{figure}

\subsection{From monodromy factorizations lifting $(a^3b)$ to those lifting $(ab)^6$.}
Our choice of $(a^3b)^3=1\in\Mod(\T)$ as initial monodromy factorization could be replaced by any other 
monodromy factorization in $\Mod(\T)$.
Our list in Table \ref{tab:d=1} will be transformed to a list of liftings of the other factorization via
 Hurwitz moves (cf., Theorem \ref{universality}).
Let us give as an example
of such transformation
from a lifting of $(a^3b)^3$ to a lifting of $(ab)^6$
(other transformations can be performed in a similar way).

For instance, the first factorization, $(a_1a_2^2b_1)^3$, from Table \ref{tab:d=1}, after a cyclic permutation becomes $(a_2^2b_1a_1)^3$, which
contains three fragments $a_2b_1a_1$. Each of these fragments is transformed by a pair of Hurwitz moves to
$b_1a_1(a_1^{-1}b_1^{-1}a_2b_1a_1)$, where the last Dehn twist, $a_1^{-1}b_1^{-1}a_2b_1a_1$, due to
$b_1^{-1}a_2b_1=a_2b_1a_2^{-1}$ (from the braid relation) is equal to 
$a_1^{-1}a_2b_1a_2^{-1}a_1$. Thus, the initial factorization is transformed into 
$$a_2b_1a_1(a_1^{-1}a_2b_1a_2^{-1}a_1)a_2b_1a_1(a_1^{-1}a_2b_1a_2^{-1}a_1)a_2b_1a_1(a_1^{-1}a_2b_1a_2^{-1}a_1).
$$
Finally, conjugation $x\to a_1a_2^{-1}xa_2a_1^{-1}$ applied to all Dehn twists of the latter factorization does not effect to $a_i$, changes $b_1$ to $b_2$
and yields
$a_2b_2a_1b_1a_2b_2a_1b_1a_2b_2a_1b_1$,
which is a lifting of $(ab)^6$.




\end{document}